\def\begmat{\left(\begin{array}}\def\endmat{\end{array}\right)}
\def\bi{\begin{itemize}\setlength{\itemsep}{0pt}} \def\ei{\end{itemize}}
\def\bl{\begin{list}{\labelitemi}{\leftmargin=1em}\setlength{\itemsep}{-2.5pt}}  \def\el{\end{list}}
\def\bn{\begin{enumerate}} \def\en{\end{enumerate}}
\def\bt{\begin{table}[h]} \def\et{\end{table}}
\def\bc{\begin{center}} \def\ec{\end{center}}
\def\T{{ \mathrm{\scriptscriptstyle T} }}
\newcommand{\abs}[1]{\left\vert#1\right\vert}
\newcommand{\norm}[1]{\left\Vert#1\right\Vert}
\newcommand \bbP{\mathbb{P}}
\newcommand \bbE{\mathbb{E}}
\newtheorem{theorem}{Theorem}[section]
\newtheorem{lemma}[theorem]{Lemma}
\newtheorem{proposition}[theorem]{Proposition}
\newtheorem{remark}[theorem]{Remark}
\theoremstyle{plain}
\theoremstyle{plain}
\theoremstyle{remark}
\theoremstyle{plain}
\newcommand \be{\begin{equs}}
\newcommand \ee{\end{equs}}
\begin{document}

\title{Bayesian shrinkage}
\author{Anirban Bhattacharya, Debdeep Pati, Natesh S. Pillai, David B. Dunson }
%
\maketitle

\begin{center}
\textbf{Abstract}
\end{center}
Penalized regression methods, such as $L_1$ regularization, are routinely used in high-dimensional applications, and there is a rich literature on optimality properties under sparsity assumptions.  In the Bayesian paradigm, sparsity is routinely induced through two-component mixture priors having a probability mass at zero, but such priors encounter daunting computational problems in high dimensions.  This has motivated an amazing variety of continuous shrinkage priors, which can be expressed as global-local scale mixtures of Gaussians, facilitating computation.  In sharp contrast to the corresponding frequentist literature, very little is known about the properties of such priors.  Focusing on a broad class of shrinkage priors, we provide precise results on prior and posterior concentration.  Interestingly, we demonstrate that most commonly used shrinkage priors,  including the Bayesian Lasso, are suboptimal in high-dimensional settings.  A new class of Dirichlet Laplace (DL) priors are proposed, which are optimal and lead to efficient posterior computation exploiting results from normalized random measure theory.  Finite sample performance of Dirichlet Laplace priors relative to alternatives is assessed in simulations.
\vspace*{.3in}

\noindent\textsc{Keywords}: {Bayesian; Convergence rate; High dimensional; Lasso; $L_1$; Penalized regression; Regularization; Shrinkage prior.}

\section{Introduction}

High-dimensional data have become commonplace in broad application areas, and there is an exponentially increasing literature on statistical and computational methods for big data.  In such settings, it is well known that classical methods such as maximum likelihood estimation break down, motivating a rich variety of alternatives based on penalization and thresholding.  Most penalization approaches produce a point estimate of a high-dimensional coefficient vector, which has a Bayesian interpretation as corresponding to the mode of a posterior distribution obtained under a shrinkage prior.  For example, the wildly popular Lasso/$L_1$ regularization approach to regression \cite{tibshirani1996regression} is equivalent to maximum {\em a posteriori} (MAP) estimation under a Gaussian linear regression model having a double exponential (Laplace) prior on the coefficients.  There is a rich theoretical literature justifying the optimality properties of such penalization approaches \cite{zhao2007model,van2008high,zhang2008sparsity,meinshausen2009lasso,raskutti2011minimax,negahban2010unified}, with fast algorithms \cite{efron2004least} and compelling applied results leading to routine use of $L_1$ regularization in particular.

The overwhelming emphasis in this literature has been on rapidly producing a point estimate with good empirical and theoretical properties.  However, in many applications, 
it is crucial to be able to obtain a realistic characterization of uncertainty in the parameters, in functionals of the parameters and in predictions.  Usual frequentist approaches to characterize uncertainty, such as constructing asymptotic confidence regions or using the bootstrap, can break down in high-dimensional settings.  For example, in regression when the number of subjects $n$ is much less than the number of predictors $p$, one cannot naively appeal to asymptotic normality and resampling from the data may not provide an adequate characterization of uncertainty.  

Given that most shrinkage estimators correspond to the mode of a Bayesian posterior, it is natural to ask whether we can use the whole posterior distribution to provide a probabilistic measure of uncertainty.  Several important questions then arise.  Firstly, from a frequentist perspective, we would like to be able to choose a default shrinkage prior that leads to similar optimality properties to those shown for $L_1$ penalization and other approaches.  However, instead of showing that a particular penalty leads to a point estimator having a minimax optimal rate of convergence under sparsity assumptions, we would like to obtain a (much stronger) result that the entire posterior distribution concentrates at the optimal rate, i.e., the posterior probability assigned to a shrinking neighborhood (proportionally to the optimal rate) of the true value of the parameter  converges to one.  In addition to providing a characterization of uncertainty, taking a Bayesian perspective has distinct advantages in terms of tuning parameter choice, allowing key penalty parameters to be marginalized over the posterior distribution instead of relying on cross-validation.  Also, by inducing penalties through shrinkage priors, important new classes of penalties can be discovered that may outperform usual $L_q$-type choices.

An amazing variety of shrinkage priors have been proposed in the Bayesian literature, with essentially no theoretical justification for the performance of these priors in the high-dimensional settings for which they were designed.  \cite{ghosal1999asymptotic} and \cite{bontemps2011bernstein} provided conditions on the prior for asymptotic normality of linear regression coefficients allowing the number of predictors $p$ to increase with sample size $n$, with \cite{ghosal1999asymptotic} requiring a very slow rate of growth and \cite{bontemps2011bernstein} assuming $p \le n$.  These
results required the prior to be sufficiently flat in a neighborhood of the true parameter value, essentially ruling out shrinkage priors.  \cite{armagan2011generalized} considered shrinkage priors in providing simple sufficient conditions for posterior consistency in $p \le n$ settings, while \cite{strawn2012finite} studied finite sample posterior contraction in $p \gg n$ settings.  

In studying posterior contraction in high-dimensional settings, it becomes clear that it is critical to obtain tight bounds on prior concentration.  This substantial technical hurdle has prevented any previous results (to our knowledge) on posterior concentration in $p \gg n$ settings for shrinkage priors.  In fact, prior concentration is critically important not just in studying frequentist optimality properties of Bayesian procedures but for Bayesians in obtaining a better understanding of the behavior of their priors.  Without a precise handle on prior concentration, Bayesians are operating in the dark in choosing shrinkage priors and the associated hyperparameters.  It becomes an art to use intuition and practical experience to indirectly induce a shrinkage prior, while focusing on Gaussian scale families for computational tractability.  Some beautiful classes of priors have been proposed by \cite{griffin2010inference,carvalho2010horseshoe,armagan2011generalized} among others, with \cite{polson2010shrink} showing that essentially all existing shrinkage priors fall within the Gaussian global-local scale mixture family.  One of our primary goals is to obtain theory that can allow evaluation of existing priors and design of novel priors, which are appealing from a Bayesian perspective in allowing incorporation of prior knowledge and from a frequentist perspective in leading to minimax optimality under weak sparsity assumptions. 

Shrinkage priors provide a continuous alternative to point mass mixture priors, which include a mass at zero mixed with a continuous density.  These priors are highly appealing in allowing separate control of the level of sparsity and the size of the signal coefficients.  In a beautiful recent article, \cite{castilloneedles} showed optimality properties for carefully chosen point mass mixture priors in high-dimensional settings.  Unfortunately, such priors lead to daunting computational hurdles in high-dimensions due to the need to explore a $2^p$ model space; an NP-hard problem.  Continuous scale mixtures of Gaussian priors can potentially lead to dramatically more efficient posterior computation.  

Focusing on the normal means problem for simplicity in exposition, we provide general theory on prior and posterior concentration under shrinkage priors.  One of our main results is that a broad class of Gaussian scale mixture priors, including the Bayesian Lasso \cite{park2008bayesian} and other commonly used choices such as ridge regression, are sub-optimal.  We provide insight into the reasons for this sub-optimality and propose a new class of Dirichlet-Laplace (DL) priors, which are optimal and lead to efficient posterior computation.  We show promising initial results for DL and Dirichlet-Cauchy (DC) priors relative to a variety of competitors.

\section{Preliminaries} \label{sec:prelim}

In studying prior and posterior computation for shrinkage priors, we require some notation and technical concepts.  We introduce some of the basic notation here.  Technical details in the text are kept to a minimum, and proofs are deferred to a later section.   

Given sequences $a_n, b_n$, we denote $a_n = O(b_n)$ if there exists a global constant $C$ such that $a_n \leq C b_n$ and $a_n = o(b_n)$ if $a_n/b_n \to 0$ as $n \to \infty$.  For a vector $ x \in \mathbb{R}^r$,  $\norm{x}_2$ denotes its Euclidean norm. We will use $\Delta^{r-1}$ to denote the $(r-1)$-dimensional simplex $\{ x = (x_1, \ldots, x_{r})^{\T} : x_j \geq 0, \sum_{j=1}^{r} x_j = 1\}$. Further, let $\Delta_0^{r-1}$ denote $\{ x = (x_1, \ldots, x_{r-1})^{\T} : x_j \geq 0, \sum_{j=1}^{r-1} x_j \leq 1\}$.

For a subset $S \subset \{1, \ldots, n\}$, let $|S|$ denote the cardinality of $S$ and define $\theta_S = (\theta_j : j \in S)$ for a vector $\theta \in \mathbb{R}^n$. 
Denote $\mbox{supp}(\theta)$ to be the \emph{support} of $\theta$, the subset of $\{1, \ldots, n\}$ corresponding to the non-zero entries of $\theta$. 
Let $l_0[q; n]$ denote the subset of  $\mathbb{R}^n$ given by
\begin{align*}
l_0[q;n] = \{ \theta \in \mathbb{R}^n ~:~ \#(1 \leq j \leq n : \theta_j \neq 0) \leq q\}.
\end{align*}
Clearly, $l_0[q; n]$ consists of $q$-sparse vectors $\theta$ with $|\mbox{supp}(\theta)| \leq q$.

Let $\mbox{DE}(\tau)$ denote a zero mean double-exponential or Laplace distribution with density $f(y) = (2 \tau)^{-1} e^{- \abs{y}/\tau}$ for $y \in \mathbb{R}$.  
Also, we use the following parametrization for the three-parameter generalized inverse Gaussian (giG) distribution: $Y \sim \mbox{giG}(\lambda, \rho, \chi)$ if $f(y) \propto y^{\lambda -1} e^{ - 0.5 (\rho y + \chi/y)}$ for $y > 0$. 

\section{Concentration properties of global-local priors} \label{sec:conc_prop}

\subsection{\bf{Motivation}}

For a high-dimensional vector $\theta \in \mathbb{R}^n$, a natural way to incorporate sparsity in a Bayesian framework is to use point mass mixture priors 
\begin{align}\label{eq:point_mass}
\theta_j \sim (1 - \pi) \delta_0 + \pi g_{\theta}, \quad j = 1, \ldots, n ,
\end{align}
where $\pi = \mbox{Pr}(\theta_j \neq 0)$, $\bbE\{ |\mbox{supp}(\theta)|  \mid \pi\} = n \pi$ is the prior guess on model size (sparsity level), and $g_{\theta}$ is an absolutely continuous density on $\mathbb{R}$.  It is common to place a beta prior on $\pi$, leading to a beta-Bernoulli prior on the model size, which conveys an automatic multiplicity adjustment \citep{scott2010bayes}.  \cite{castilloneedles} established that prior (\ref{eq:point_mass}) with an appropriate beta prior on $\pi$ and suitable tail conditions on $g_{\theta}$ leads to a frequentist minimax optimal rate of posterior contraction in the normal means setting.  We shall revisit the normal means problem in subsection \ref{subsec:norm_means}.

Although point mass mixture priors are intuitively appealing and possess attractive theoretical properties, posterior sampling requires a stochastic search over an enormous space in complicated models where marginal likelihoods are not available analytically, leading to slow mixing and convergence \cite{polson2010shrink}. Computational issues and considerations that many of the $\theta_j$s may be small but not exactly zero has motivated a rich literature on continuous shrinkage priors; for some flavor of the vast literature refer to \citep{park2008bayesian,carvalho2010horseshoe,griffin2010inference,hans2011elastic,armagan2011generalized}.
\citep{polson2010shrink} noted that essentially all such shrinkage priors can be represented as global-local (GL) mixtures of Gaussians, 
\begin{align}\label{eq:lg}
\theta_j \sim \mbox{N}(0, \psi_j \tau), \quad \psi_j \sim f, \quad \tau \sim g,  
\end{align}
where $\tau$ controls global shrinkage towards the origin while the local scales $\{ \psi_j \}$ allow deviations in the degree of shrinkage.  If $g$ puts sufficient mass near zero and $f$ is appropriately chosen, GL priors in (\ref{eq:lg}) can intuitively approximate (\ref{eq:point_mass}) but through a continuous density concentrated near zero with heavy tails. 

GL priors potentially have substantial computational advantages over variable selection priors, since the normal scale mixture representation allows for conjugate updating of $\theta$ and $\psi$ in a block. Moreover, a number of frequentist regularization procedures such as ridge, lasso, bridge and elastic net correspond to posterior modes under GL priors with appropriate choices of $f$ and $g$. For example, one obtains a double-exponential prior corresponding to the popular $L_1$ or lasso penalty if $f$ has an exponential distribution. However, unlike variable selection priors (\ref{eq:point_mass}), many aspects of shrinkage priors are poorly understood.  For example, even basic properties, such as how the prior concentrates around an arbitrary sparse $\theta_0$, remain to be shown. Hence, Bayesians tend to operate in the dark in using such priors, and frequentists tend to be skeptical due to the lack of theoretical justification.

This skepticism is somewhat warranted, as it is clearly the case that reasonable seeming priors can have poor performance in high-dimensional settings.  For example, choosing $\pi=1/2$ in prior (\ref{eq:point_mass}) leads to an exponentially small prior probability of $2^{-n}$ assigned to the null model, so that it becomes literally impossible to override that prior informativeness with the information in the data to pick the null model. However, with a beta prior on $\pi$, this problem can be avoided \citep{scott2010bayes}.  In the same vein, if one places i.i.d. $\mbox{N}(0, 1)$ priors on the entries of $\theta$, then the induced prior on $\norm{\theta}$ is highly concentrated around $\sqrt{n}$ leading to misleading inferences on $\theta$ almost everywhere.  These are simple cases, but it is of key importance to assess whether such problems arise for other priors in the GL family and if so, whether improved classes of priors can be found.

There has been a recent awareness of these issues, motivating a basic assessment of the marginal properties of shrinkage priors for a single $\theta_j$. Recent priors such as the horseshoe \cite{carvalho2010horseshoe} and generalized double Pareto \cite{armagan2011generalized} are carefully formulated to obtain marginals having a high concentration around zero with heavy tails.  This is well justified, but as we will see below, such marginal behavior alone is not sufficient; it is necessary to study the joint distribution of $\theta$ on $\mathbb{R}^n$.  Specifically, we recommend studying the prior concentration $\bbP(\norm{\theta - \theta_0} < t_n)$ where the true parameter $\theta_0$ is assumed to be sparse: $\theta_0 \in l_0[q_n; n]$ with the number of non-zero components $q_n \ll n$ and
\begin{align}\label{eqn:tn}
t_n = n^{\delta/2} \quad \mathrm{with} \quad \delta \in (0,1).  
\end{align}

In models where $q_n \ll n$,  the prior must place sufficient mass around sparse vectors to allow for good posterior contraction; see subsection \ref{subsec:norm_means} for further details. Now, as a first illustration, consider the following two extreme scenarios: i.i.d. standard normal priors for the individual components $\theta_j$ \textit{vs.} point mass mixture priors given by \eqref{eq:point_mass}.


\begin{theorem}\label{thm:basic_stmt}
Assume that $\theta_0 \in l_0[q_n; n]$ with $q_n = o(n)$. Then, for i.i.d standard normal priors on $\theta_j$, 
\begin{align}\label{thm:norm_conc}
\bbP(\norm{\theta - \theta_0}_2 < t_n) \leq e^{-c\, n}. 
\end{align}
For point mass mixture priors \eqref{eq:point_mass} with $\pi \sim \mbox{Beta}(1, n + 1)$ and $g_{\theta}$ being a standard Laplace distribution $g_{\theta} \equiv \mbox{DE}(1)$, 
\begin{align}\label{thm:pm_conc}
\bbP(\norm{\theta - \theta_0}_2 < t_n) \geq e^{-c \max\{q_n, \norm{\theta_0}_1\}}.
\end{align}
\end{theorem}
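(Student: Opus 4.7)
The first bound is a reverse Chernoff calculation for the non-central chi-square $Y = \|\theta - \theta_0\|_2^2 = \sum_{j=1}^n (\theta_j - \theta_{0j})^2$, whose mean is $n + \|\theta_0\|_2^2 \geq n$ while $t_n^2 = n^\delta = o(n)$. The plan is to use the closed-form Laplace transform $\bbE[e^{-sY}] = (1+2s)^{-n/2} \exp\{-s\|\theta_0\|_2^2/(1+2s)\}$ and write, for any $s > 0$,
\begin{equation*}
\bbP(Y < t_n^2) \leq e^{s t_n^2}\, \bbE[e^{-sY}] \leq (1+2s)^{-n/2} e^{s t_n^2}.
\end{equation*}
Setting $s = 1/2$ gives the bound $2^{-n/2} e^{t_n^2/2}$; since $t_n^2 = o(n)$, this is at most $e^{-cn}$ for some $c > 0$ and all large $n$, which is \eqref{thm:norm_conc}.

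For the lower bound \eqref{thm:pm_conc}, write $S_0 = \mathrm{supp}(\theta_0)$ with $|S_0| = q_n$ and consider the event
\begin{equation*}
E = \bigl\{\theta_j = 0 \text{ for } j \notin S_0\bigr\} \cap \bigl\{|\theta_j - \theta_{0j}| \leq r \text{ for } j \in S_0\bigr\}, \qquad r := \min\bigl(1,\, t_n/\sqrt{q_n}\bigr),
\end{equation*}
so that $E$ forces $\|\theta - \theta_0\|_2 \leq r\sqrt{q_n} \leq t_n$ deterministically. Conditional on the common weight $\pi$, the $\theta_j$ are independent; for $j \in S_0$ I would use the elementary Laplace lower bound $\int_{a-r}^{a+r}\tfrac{1}{2} e^{-|x|}dx \geq r e^{-|a|-r}$ together with the $\pi$-probability of drawing from the continuous slab, while for $j \notin S_0$ the spike contributes $(1-\pi)$. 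This yields
\begin{equation*}
\bbP(E \mid \pi) \geq (1-\pi)^{n-q_n} \pi^{q_n} r^{q_n} e^{-q_n r - \|\theta_0\|_1}.
\end{equation*}
Integrating against the $\mathrm{Beta}(1, n+1)$ density for $\pi$ produces the Beta function $(n+1) B(q_n+1, 2n-q_n+1) = (n+1)/[(2n+1)\binom{2n}{q_n}]$; Stirling gives an exponent of order $-q_n$ (absorbing a $\log(n/q_n)$ factor into the constant). Since $r \leq 1$, also $(re^{-r})^{q_n} \geq e^{-c_1 q_n}$. Combining the three pieces gives $\bbP(E) \geq \exp\{-c(q_n + \|\theta_0\|_1)\}$, and finally $\bbP(\|\theta-\theta_0\|_2 < t_n) \geq \bbP(E) \geq e^{-c \max\{q_n, \|\theta_0\|_1\}}$ after relabelling $c$.

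The main obstacle I anticipate is the careful accounting of the combinatorial factor $\binom{2n}{q_n}^{-1}$ coming out of the Beta integration, which a naive Stirling bound turns into $e^{-q_n \log(n/q_n)}$ rather than $e^{-cq_n}$; the cleaner exponent in the statement presumably hinges on absorbing the $\log(n/q_n)$ into $c$ in the sparsity regime of interest. The rest of the argument is purely bookkeeping: choosing $r$ small enough to fit inside a $t_n$-ball yet large enough that $(re^{-r})^{q_n}$ does not decay faster than $e^{-q_n}$, and using the hierarchical representation to decouple the zero and nonzero coordinates. The resulting dichotomy -- exponentially small in $n$ for the Gaussian prior versus exponentially small only in the effective sparsity and $\ell_1$ norm of the truth for the point mass mixture -- is exactly the phenomenon the remainder of the paper seeks to extend to continuous global-local priors.
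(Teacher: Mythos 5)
Your proof of \eqref{thm:norm_conc} is correct and complete: the Chernoff bound $\bbP(Y< t_n^2)\le e^{st_n^2}\,\bbE[e^{-sY}]\le (1+2s)^{-n/2}e^{st_n^2}$ for the noncentral chi-square $Y=\norm{\theta-\theta_0}_2^2$, with $s=1/2$, gives $2^{-n/2}e^{n^{\delta}/2}\le e^{-cn}$. The paper instead reduces to the central case via Anderson's inequality (Lemma \ref{lem:anderson}) and then quotes a chi-square lower-deviation bound; your single Laplace-transform computation packages both steps and is, if anything, cleaner.

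For \eqref{thm:pm_conc} the paper offers no proof beyond a citation to \cite{castilloneedles}, and your reconstruction is the right skeleton, but the final step contains a genuine gap that you flag and then wave away. Two diverging factors are being absorbed into ``$c$''. First, the Beta integration yields $(n+1)/[(2n+1)\binom{2n}{q_n}]\ge C e^{-q_n\log(2en/q_n)}$, and under the hypothesis $q_n=o(n)$ one has $\log(n/q_n)\to\infty$, so $e^{-Cq_n\log(n/q_n)}$ is \emph{not} of the form $e^{-cq_n}$ for a constant $c$; this cannot be fixed by relabelling. Second, the claim that $r\le 1$ implies $(re^{-r})^{q_n}\ge e^{-c_1q_n}$ is backwards: you need $r$ bounded \emph{below}, i.e.\ $t_n/\sqrt{q_n}\gtrsim 1$, which fails whenever $q_n\gg n^{\delta}$, in which case $r^{q_n}=e^{-\frac{q_n}{2}\log(q_n/t_n^2)}$ decays faster than any $e^{-cq_n}$. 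What your argument honestly proves, in the regime $q_n\lesssim t_n^2=n^{\delta}$, is the Castillo--van der Vaart form $\bbP(\norm{\theta-\theta_0}_2<t_n)\ge e^{-C(q_n\log(2n/q_n)+\norm{\theta_0}_1)}$. To reach the stated exponent $\max\{q_n,\norm{\theta_0}_1\}$ one must exploit the growing radius $t_n=n^{\delta/2}$ rather than force the full support: when $\norm{\theta_0}_2<t_n$ the null model alone already gives probability at least $1/2$, and in general only the coordinates with $|\theta_{0j}|$ exceeding roughly $t_n/(2\sqrt{q_n})$ need be drawn from the slab, the remainder being sent to the spike at no exponential cost. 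Even with that refinement a residual logarithmic factor survives in some regimes, so you should either prove the $q_n\log(n/q_n)$ version of the bound or state explicitly the additional condition under which the constant-$c$ form holds.
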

\begin{proof}
Using $\norm{\theta}_2^2 \sim \chi_n^2$, the claim made in \eqref{thm:norm_conc} follows from an application of Anderson's inequality \eqref{lem:anderson} and standard chi-square deviation inequalities. In particular, the exponentially small concentration also holds for $\bbP(\norm{\theta_0}_2 < t_n)$. The second claim \eqref{thm:pm_conc} follows from results in \cite{castilloneedles}. 
\end{proof}
As seen from Theorem \ref{thm:basic_stmt}, the point mass mixture priors have much improved concentration around sparse vectors, as compared to the i.i.d. normal prior distributions. The theoretical properties enjoyed by the point mass mixture priors can mostly be attributed to this improved concentration. The above comparison suggests that it is of merit to evaluate a shrinkage prior in high dimensional models under sparsity assumption by obtaining its concentration rates around sparse vectors. In this paper, we carry out this program for a wide class of shrinkage priors. Our analysis also suggests some novel priors with improved concentration around sparse vectors. 

In order to communicate our main results to a wide audience, we will first present specific corollaries of our main results applied to various existing shrinkage priors. The main results are given in Section \ref{sec:main_pf}. Recall the GL priors presented in \eqref{eq:lg} and the sequence $t_n$ in \eqref{eqn:tn}.

\subsection{\bf{Prior concentration for global priors}}


This simplified setting involves  only a global parameter, \textit{i.e.}, $\psi_j = 1$ for all $j$. This subclass includes the important example of ridge regression, with $\tau$ routinely assigned an inverse-gamma prior, $\tau \sim \mbox{IG}(\alpha, \beta)$.  
\begin{theorem}\label{thm:conc_invgam}
Assume $\theta \sim \mathrm{GL}$ with $\psi_j = 1$ for all $j$. If the prior $f$ on the global parameter $\tau$ has an $\mathrm{IG}(\alpha, \beta)$ distribution, then
\begin{align}\label{eq:conc_ig}
\bbP(\norm{\theta}_2 < t_n) \leq e^{- C n^{1- \delta} },
\end{align}
where $C > 0$ is a constant depending only on $\alpha$ and $\beta$.  
\end{theorem}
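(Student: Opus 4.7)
The plan is to condition on the global scale $\tau$ and split the integration into a region where the chi-square tail is tiny and a region where the inverse-gamma prior itself is tiny. Conditional on $\tau$, the coordinates $\theta_j$ are i.i.d.\ $N(0,\tau)$, so $\|\theta\|_2^2/\tau \sim \chi_n^2$. Hence
\begin{equation*}
\bbP(\|\theta\|_2 < t_n) = \int_0^\infty \bbP(\chi_n^2 < n^\delta/\tau)\,f(\tau)\,d\tau,
\end{equation*}
where $f$ is the $\mathrm{IG}(\alpha,\beta)$ density. Choose the threshold $\tau_\star = 2 n^{\delta-1}$ so that on $\{\tau \geq \tau_\star\}$ we have $n^\delta/\tau \leq n/2$, while $\{\tau < \tau_\star\}$ is the small-$\tau$ tail of the inverse-gamma.

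On the first region, I would invoke a standard lower-tail chi-square inequality (Laurent--Massart) giving $\bbP(\chi_n^2 \leq n/2) \leq e^{-cn}$ for some absolute $c>0$. This handles the integrand uniformly in $\tau \geq \tau_\star$, yielding a contribution at most $e^{-cn}$, which is negligible compared to the target rate $e^{-Cn^{1-\delta}}$ since $\delta>0$.

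The second region is where the work is. Since $\tau \sim \mathrm{IG}(\alpha,\beta)$, the variable $u = 1/\tau$ is distributed as $\mathrm{Gamma}(\alpha,\beta)$, and the change of variables gives
\begin{equation*}
\bbP(\tau < \tau_\star) = \bbP(u > 1/\tau_\star) = \bbP\bigl(\mathrm{Gamma}(\alpha,\beta) > n^{1-\delta}/2\bigr).
\end{equation*}
A direct Chernoff/Mills-ratio estimate on the Gamma upper tail gives a bound of the form $C_{\alpha,\beta}\,n^{(\alpha-1)(1-\delta)}\,e^{-\beta n^{1-\delta}/2}$, which is at most $e^{-C n^{1-\delta}}$ for any $C < \beta/2$ once $n$ is large, with $C$ depending only on $\alpha,\beta$.

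Putting the two pieces together,
\begin{equation*}
\bbP(\|\theta\|_2 < t_n) \leq \bbP(\tau < \tau_\star) + \sup_{\tau \geq \tau_\star}\bbP(\chi_n^2 < n^\delta/\tau) \leq e^{-C n^{1-\delta}} + e^{-cn} \leq e^{-C' n^{1-\delta}},
\end{equation*}
with $C'$ depending only on $\alpha,\beta$, as required. The only real obstacle is choosing the split point $\tau_\star$ so that both pieces give the claimed exponent; once one recognizes that the inverse-gamma tail near zero is the binding constraint (since $1-\delta < 1$), the rest reduces to standard Gamma and chi-square tail inequalities, and no careful handling of polynomial prefactors is needed because they are swallowed by any slightly smaller choice of $C'$.
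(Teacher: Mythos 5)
Your proof is correct, but it takes a genuinely different route from the paper's. The paper computes the prior mass exactly: it writes the centered probability as an $n$-fold integral over $\{\sum x_j \le w_n\}$, integrates out $\tau$ in closed form against the inverse-gamma density, reduces the resulting simplex integral to a one-dimensional integral via the Dirichlet integral formula (Lemma \ref{lem:dir_formula}), and then bounds $\int_0^1 t^{n/2-1}/(2\beta+w_n t)^{n/2+\alpha}\,dt$ by $(\tilde w_n/(1+\tilde w_n))^{n/2-1}\le e^{-C n^{1-\delta}}$. You instead condition on $\tau$, observe $\|\theta\|_2^2=\tau\chi^2_n$ in distribution, and split at $\tau_\star=2n^{\delta-1}$: for $\tau\ge\tau_\star$ the chi-square lower tail gives $e^{-cn}$, and for $\tau<\tau_\star$ the $\mathrm{Gamma}(\alpha,\beta)$ upper tail of $1/\tau$ gives $e^{-\beta n^{1-\delta}/2}$ up to a polynomial prefactor. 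Both tail bounds are standard and the decomposition is valid, so the argument goes through (as in the paper, the polynomial prefactors force a slightly smaller constant and large $n$, which is harmless). Your version is shorter and makes the source of the exponent transparent — it is the inverse-gamma mass $e^{-\beta/\tau}$ evaluated at the critical scale $\tau\asymp n^{\delta-1}$ below which the chi-square event becomes likely. What the paper's heavier machinery buys is reusability: the exact Fubini-plus-Dirichlet-integral reduction yields matching two-sided bounds for the class $\mathcal{F}$ in Theorem \ref{thm:conc_g} and is the template for the non-centered and local-scale computations in Theorem \ref{thm:conc_lg}, where a sup-and-split argument over $\tau$ alone would not produce the sharp lower bounds needed for the posterior results.
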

The above theorem shows that compared to i.i.d. normal priors \eqref{thm:norm_conc}, the prior concentration does not improve much under an inverse-gamma prior on the global variance regardless of the hyperparameters (provided they don't scale with $n$) even when $\theta_0=0$.  Concentration around $\theta_0$ away from zero will clearly be even worse.  Hence, such a prior is not well-suited in high-dimensional settings, confirming empirical observations  documented in \cite{gelman2006prior,polson2011half}. It is also immediate that the same concentration bound in \eqref{eq:conc_ig} would be obtained for the giG family of priors on $\tau$. 

In \cite{polson2011half}, the authors instead recommended a half-Cauchy prior as a default choice for the global variance (also see \cite{gelman2006prior}).  We consider the following general class of densities on $(0, \infty)$ for $\tau$, to be denoted $\mathcal{F}$ henceforth, that satisfy:  (i) $f(\tau) \leq M$ for all $\tau \in (0, \infty)$  (ii) $f(\tau) > 1/M$ for all $\tau \in (0, 1)$, for some constant $M > 0$. Clearly, $\mathcal{F}$ contains the half-Cauchy and exponential families. The following result provides concentration bounds for these priors. 

\begin{theorem}\label{thm:conc_g}
Let $\norm{\theta_0}_2 = o(\sqrt{n})$. If the prior $f$ on the global parameter $\tau$ belongs to the class $\mathcal{F}$ above then,
\begin{align}
& C_1 e^{-(1-\delta) \log n} \leq \bbP(\norm{\theta}_2 <  t_n ) \leq C_2 e^{- (1- \delta) \log n} \label{eq:conc_gonly_c}.
\end{align}
Furthermore, if $\norm{\theta_0}_2 > {t_n}$, then
\begin{align}
e^{- c_1 n \log a_n }\leq \bbP(\norm{\theta - \theta_0}_2 <  t_n ) \leq  e^{- c_2 n \log a_n }, \label{eq:conc_gonly_nc}
\end{align}
where $a_n = \norm{\theta_0}_2/t_n > 1$ and $c_i, C_i > 0$ are constants with $C_1, C_2, c_2$ depending only on $M$ in the definition of $\mathcal{F}$ and $c_1$ depending on $M$ and $\delta$. 
\end{theorem}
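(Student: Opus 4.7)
The plan is to work conditionally on $\tau$: given $\tau$ we have $\theta\mid\tau\sim N(0,\tau I_n)$, so the whole argument reduces to (i) understanding the conditional ball probability $p(\tau):=\bbP(\norm{\theta-\theta_0}_2<t_n\mid\tau)$ as a function of $\tau$, and (ii) integrating $p(\tau)$ against a density $f\in\mathcal F$ that is pinched between $1/M$ and $M$ on $(0,1)$. The rescaling $\norm{\theta}_2^2/\tau\sim\chi_n^2$ makes it clear that the scale $\tau\asymp t_n^2/n$ is the relevant one for concentration near $0$, while $\tau^*\asymp\norm{\theta_0}_2^2/n$ is the relevant scale for concentration near a signal vector of size $\norm{\theta_0}_2=a_n t_n>t_n$.

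For \eqref{eq:conc_gonly_c} I would split $\int_0^\infty p(\tau)f(\tau)\,d\tau$ at $\tau_0=2t_n^2/n$. Using $p(\tau)\leq 1$ and $f\leq M$, the contribution from $(0,\tau_0)$ is at most $M\tau_0\asymp n^{\delta-1}$; for $\tau>\tau_0$, $t_n^2/\tau\leq n/2$ so the standard Chernoff bound $\bbP(\chi_n^2<n/2)\leq e^{-cn}$ (as in the argument behind Theorem~\ref{thm:basic_stmt}) makes the tail contribution negligible. For the lower bound, on $\tau\in(0,t_n^2/(2n))$ we have $t_n^2/\tau\geq 2n$, so $p(\tau)\geq\bbP(\chi_n^2<2n)\geq 1/2$ for $n$ large; since $t_n^2/(2n)<1$ eventually, $f\geq 1/M$ on this interval, giving a lower bound of the same order $n^{\delta-1}=e^{-(1-\delta)\log n}$.

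For \eqref{eq:conc_gonly_nc}, change variables $v=\theta-\theta_0$ to write
\[
p(\tau)=(2\pi\tau)^{-n/2}\int_{\norm{v}_2<t_n}e^{-\norm{v+\theta_0}_2^2/(2\tau)}\,dv,
\]
and use the triangle inequalities $(a_n-1)t_n\leq\norm{v+\theta_0}_2\leq(a_n+1)t_n$ on the ball together with $V_n t_n^n=\pi^{n/2}t_n^n/\Gamma(n/2+1)$ and Stirling to obtain the sandwich
\[
\frac{(et_n^2/(n\tau))^{n/2}}{\sqrt{\pi n}}\,e^{-(a_n+1)^2 t_n^2/(2\tau)}\;\lesssim\;p(\tau)\;\lesssim\;\frac{(et_n^2/(n\tau))^{n/2}}{\sqrt{\pi n}}\,e^{-(a_n-1)^2 t_n^2/(2\tau)}.
\]
Both sides are unimodal in $\tau$ with mode at $\tau^*=a_n^2 t_n^2/n$, and at $\tau^*$ they evaluate to $a_n^{-n}$ up to factors depending only on $(a_n\pm 1)/a_n=O(1)$; this is what delivers the rate $e^{-n\log a_n}$. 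For the upper bound I substitute $s=t_n^2/(2\tau)$ to turn $\int p(\tau)f(\tau)\,d\tau$ (with $f\leq M$) into the $\Gamma$-integral $\int_0^\infty s^{n/2-2}e^{-(a_n-1)^2 s}ds=\Gamma(n/2-1)(a_n-1)^{-(n-2)}$, and polynomial bookkeeping then yields $\bbP(\norm{\theta-\theta_0}_2<t_n)\leq e^{-c_2 n\log a_n}$. For the lower bound I restrict the $\tau$-integration to a constant-multiplicative neighborhood of $\tau^*$ on which $p(\tau)\gtrsim p(\tau^*)$, and use $f\geq 1/M$ on $(0,1)$, which is valid in the regime $\tau^*<1$.

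The main hurdle is the upper bound in \eqref{eq:conc_gonly_nc}. A naive Anderson-type bound $p(\tau)\leq\bbP(\norm{\theta}_2<t_n\mid\tau)$ would deliver only the polynomial rate of part~(i) and miss the exponential factor $e^{-n\log a_n}$ entirely; the right move is to retain the displacement term $e^{-(a_n-1)^2 t_n^2/(2\tau)}$ explicitly and then collapse the $\tau$-integration via the $\Gamma$-identity, exploiting the fact that $f$ is only controlled by its \emph{supremum} $M$ rather than its detailed shape. A subsidiary subtlety is that the lower bound requires the mode $\tau^*=\norm{\theta_0}_2^2/n$ to lie inside $(0,1)$ so that the lower bound $f\geq 1/M$ is available; this explains why $c_1$ must depend on $\delta$ (through the width of the $\tau$-neighborhood, which is proportional to $\tau^*\asymp a_n^2 n^{\delta-1}$) while $c_2$ depends only on $M$.
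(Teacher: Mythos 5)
Your argument for \eqref{eq:conc_gonly_c} is correct and is actually a genuinely different, more elementary route than the paper's: you split the $\tau$-integral at $\tau_0\asymp t_n^2/n$ and use the $\chi^2_n$ representation of $\norm{\theta}_2^2/\tau$ directly, whereas the paper interchanges the order of integration and invokes a Dirichlet integral formula (Lemma \ref{lem:dir_formula}) to collapse the $n$-fold integral over the simplex. Your version buys simplicity; the paper's machinery is set up this way because the same simplex-integral reduction is needed later for the local-scale priors, where the $\chi^2$ shortcut is unavailable.

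There is, however, a genuine gap in your upper bound for \eqref{eq:conc_gonly_nc}. Bounding $\norm{v+\theta_0}_2\geq(a_n-1)t_n$ on the ball and then computing the $\Gamma$-integral yields, up to polynomial factors, $(a_n-1)^{-(n-2)}$ rather than $a_n^{-(n-2)}$. These differ by $e^{(n-2)\log(a_n/(a_n-1))}$, which is not absorbable into $e^{-c_2 n\log a_n}$ with a $c_2$ depending only on $M$: for $1<a_n\leq 2$ your bound exceeds $1$ and is vacuous, and even for moderate $a_n>2$ the implied $c_2$ degrades as $a_n$ decreases. The theorem is needed precisely in this regime (Theorem \ref{thm:lb_g} takes $t_n=s_n$ with $\norm{\theta_0}_2$ possibly only slightly larger). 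The triangle inequality is exactly the wrong tool here because it charges the displacement only for the worst point of the ball. The paper instead uses the two-sided Anderson lemma (Lemma \ref{lem:anderson}), which for $\Sigma=\tau\mathrm{I}_n$ gives
\begin{equation*}
\bbP(\norm{\theta-\theta_0}_2<t_n\mid\tau)\;\leq\;e^{-\norm{\theta_0}_2^2/(2\tau)}\,\bbP(\norm{\theta}_2<t_n\mid\tau),
\end{equation*}
i.e.\ it retains the \emph{full} displacement $\norm{\theta_0}_2^2=a_n^2t_n^2$ in the exponent while keeping the centered small-ball factor. Feeding this into your own $s=t_n^2/(2\tau)$ substitution produces $\Gamma(n/2-1)\,(w_n/(w_n+\norm{\theta_0}_2^2))^{n/2-1}\leq\Gamma(n/2-1)\,a_n^{-(n-2)}$ and closes the gap. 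Note that the ``naive Anderson-type bound'' you dismiss is a misstatement of the lemma: Anderson's inequality as used here is not merely $p(\tau)\leq\bbP(\norm{\theta}_2<t_n\mid\tau)$ but includes the RKHS-norm exponential factor, which is what carries the $e^{-n\log a_n}$ rate. Your lower bound for \eqref{eq:conc_gonly_nc} loses only a factor $e^{-O(n)}$ relative to the paper's (which itself loses a constant in the base), so it is acceptable under the same reading of $c_1$; but you should also verify that the restricted $\tau$-neighborhood of $\tau^*$ on which you apply $f\geq 1/M$ lies in $(0,1)$, which is where the hypothesis $\norm{\theta_0}_2=o(\sqrt{n})$ enters (the paper handles this via Lemma \ref{lem:incomplete_gamma}).
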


Thus \eqref{eq:conc_gonly_c} in Theorem \ref{thm:conc_g} shows that the prior concentration around zero can be dramatically improved from exponential to polynomial with a careful prior on $\tau$ that can assign sufficient mass near zero, such as the half-Cauchy prior \cite{gelman2006prior, polson2011half}.  Unfortunately, as \eqref{eq:conc_gonly_nc} shows,  for signals of large magnitude one again obtains an exponentially decaying probability.  Hence, Theorem \ref{thm:conc_g} conclusively shows that global shrinkage priors are simply not flexible enough for high-dimensional problems. 
\begin{remark}\label{rem:thm:conc_g}
The condition $\norm{\theta_0}_2 \geq t_n$ is only used to prove the lower bound in \eqref{eq:conc_gonly_nc}. For any $\norm{\theta_0}$ bounded below by a constant, we would still obtain an upper bound $e^{- C n^{1- \delta} \log n}$ in \eqref{eq:conc_gonly_nc}, similar to the bound in \eqref{eq:conc_ig}. 
\end{remark}

%
%
%

\subsection{\bf{Prior concentration for a class of GL priors}}
Proving concentration results for the GL family \eqref{eq:lg} in the general setting presents a much harder challenge compared to Theorem \ref{thm:conc_g} since we now have to additionally integrate over the $n$ local parameters $\psi = (\psi_1, \ldots, \psi_n)$.  We focus on an important sub-class in Theorem \ref{thm:conc_lg} below, namely the exponential family for the distribution of $g$ in \eqref{eq:lg}. For analytical tractability, we additionally assume that $\theta_0$ has only one non-zero entry. The interest in the exponential family arises from the fact that normal-exponential scale mixtures give rise to the double-exponential family \cite{west1987scale}: $\theta \mid \psi \sim N(0, \psi \sigma^2), \psi \sim \mbox{Exp}(1/2)$ implies $\theta \sim \mbox{DE}(\sigma)$, and hence this family of priors can be considered as a Bayesian version of the lasso \cite{park2008bayesian}. We now state a concentration result for this class noting that a general version of Theorem \ref{cor:thm_lg} can be found in Theorem \ref{thm:conc_lg} stated in Section \ref{sec:main_pf}. 

\begin{theorem}\label{cor:thm_lg}
Assume $\theta \sim \mathrm{GL}$ with $f \in \mathcal{F}$ and $g \equiv \mathrm{Exp}(\lambda)$ for some constant $\lambda > 0$. Also assume $\theta_0$ has only one non-zero entry and $\norm{\theta_0}_2^2 > \log n$. Then, for a global constant $C > 0$ depending only on $M$ in the definition of $\mathcal{F}$,
\begin{align}\label{thm:lg_ub_cor}
\bbP(\norm{\theta - \theta_0}_2 < t_n) \leq e^{- C \sqrt{n}} . 
\end{align}
\end{theorem}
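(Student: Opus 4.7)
The plan is to proceed by conditioning on the global scale $\tau$ and marginalizing out the local scales $\psi_j$, which (via the normal--exponential scale mixture identity of \cite{west1987scale}) gives the conditional representation $\theta_j \mid \tau \stackrel{\mathrm{iid}}{\sim} \mathrm{DE}(b_\tau)$ with $b_\tau := \sqrt{\tau/(2\lambda)}$. Denoting the single non-zero entry of $\theta_0$ by $\mu$, with $\mu^2 > \log n$, I would use the set inclusion
\[
\{\|\theta - \theta_0\|_2 < t_n\} \subseteq \{|\theta_1 - \mu| < t_n\} \cap \bigl\{\textstyle\sum_{j\geq 2}\theta_j^2 < t_n^2\bigr\}
\]
together with conditional independence given $\tau$ to factorize $\bbP(\|\theta - \theta_0\|_2 < t_n \mid \tau) \leq T_1(\tau)\, T_2(\tau)$, where $T_1(\tau)$ controls the one-dimensional signal coordinate and $T_2(\tau)$ the concentration of the remaining $n-1$ noise coordinates near the origin.

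Next I would produce quantitative bounds on each factor. Since $\theta_1 \mid \tau \sim \mathrm{DE}(b_\tau)$, direct integration of the DE density on $[\mu - t_n, \mu + t_n]$ yields $T_1(\tau) \leq e^{-(\mu - t_n)_+/b_\tau}$, exhibiting the exponential tail decay of $\mathrm{DE}(b_\tau)$ evaluated at the ``signal gap'' $\mu - t_n$. For $T_2$, a Cauchy--Schwarz step reduces the $\ell_2$-ball event to the $\ell_1$-event $\sum_{j \geq 2}|\theta_j| < \sqrt{n-1}\, t_n$; since the $|\theta_j| \mid \tau$ are i.i.d.\ $\mathrm{Exp}(1/b_\tau)$, their sum is $\mathrm{Gamma}(n-1, 1/b_\tau)$-distributed with mean $(n-1)b_\tau$, and a Cram\'er--Chernoff bound on the Gamma lower tail produces
\[
T_2(\tau) \leq \Bigl(\tfrac{e\, t_n}{b_\tau \sqrt{n-1}}\Bigr)^{n-1} e^{-\sqrt{n-1}\, t_n/b_\tau}, \qquad b_\tau > t_n/\sqrt{n-1},
\]
which decays super-exponentially as $b_\tau$ exceeds the critical scale.

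In the third step I would integrate the product $T_1 T_2$ against the prior density $f(\tau) \leq M$ from the definition of $\mathcal{F}$, changing variables to $b = b_\tau$ via $d\tau = 4\lambda b\, db$. I would split the $b$-axis at the critical value $b_\star \asymp t_n/\sqrt{n}$, determined by equating the mean of $\sum \theta_j^2$ to $t_n^2$. In the regime $b > b_\star$ the Chernoff bound on $T_2$ dominates and the integrand decays super-exponentially in $n$. In the regime $b \leq b_\star$, the DE-tail bound on $T_1$ yields $-(\mu - t_n)/b \leq -(\mu - t_n)\sqrt{n}/t_n$, so that the $\sqrt{n}$ rate in the final exponential emerges precisely from the scaling $1/b_\star \asymp \sqrt{n}/t_n$; the hypothesis $\mu^2 > \log n$ enters here in guaranteeing that the exponent is genuinely of order $\sqrt{n}$ rather than degenerate.

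The main obstacle is the narrow transition window around $b_\star$, where neither $T_1$ nor $T_2$ individually provides exponential decay. Handling it requires a joint optimization of the product bound $T_1 T_2$ in $b$, balancing the Cram\'er--Chernoff exponent in $T_2$ against the DE-tail exponent in $T_1$; the balance point lies at $b_\star$ and its vicinity furnishes the $e^{-C\sqrt{n}}$ rate, with the constant $C$ tracked to depend only on $M$ (and implicitly on $\lambda, \delta$) through the uniform bound $f(\tau) \leq M$ applied in the change-of-variables integral. A secondary subtlety is that the change of variables places $b_\star$ in the body rather than the tail of the induced prior on $b$, so no tail decay of $f$ can be exploited -- the entire bound must come from the product $T_1 T_2$ itself.
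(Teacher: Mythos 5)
Your route is genuinely different from the paper's. The paper conditions on $(\psi,\tau)$, applies Anderson's inequality (Lemma \ref{lem:anderson}), writes the centered ball probability as an $n$-fold integral over the simplex $\{\sum x_j \le w_n\}$, reduces that to a one-dimensional integral via the Dirichlet/Dickey identity (Lemma \ref{lem:dickey_integral}), and integrates out $\psi_2,\dots,\psi_n$ with $\mathrm{erfc}$ bounds; the result is the $\psi_1$-integral in \eqref{eq:lg_ub_stmt}, which is then estimated by the splitting argument appearing in the proof of Theorem \ref{thm:lb_lg}. You instead marginalize the local scales first to get i.i.d.\ $\mathrm{DE}(b_\tau)$ coordinates given $\tau$, factor the ball event coordinate-wise, and combine a Laplace tail bound on the signal coordinate with a Cauchy--Schwarz plus Gamma lower-tail Chernoff bound on the noise coordinates. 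This is more elementary, avoids the simplex machinery entirely, and in the end produces the same exponent of order $\sqrt{n}\,\norm{\theta_0}_2/t_n$ (your regime $b\le b_\star$ gives $(\mu-t_n)\sqrt{n}/t_n$, while the paper's $\psi_1$-integral gives $u_n\sqrt{m_n}\asymp\sqrt{n}\,\norm{\theta_0}_2/t_n$).

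There is, however, a genuine gap at the final step. On $\{b\le b_\star\}$ your only source of decay is $T_1(\tau)\le e^{-(\mu-t_n)_+/b_\tau}$, and the hypothesis $\norm{\theta_0}_2^2>\log n$ only gives $\mu>\sqrt{\log n}$, which is far below $t_n=n^{\delta/2}$. Hence $(\mu-t_n)_+$ may equal $0$, $T_1\equiv 1$ on that region, and the prior mass of $\{b\le b_\star\}$ under $f(\tau)\le M$ is only polynomially small (of order $\lambda M b_\star^2\asymp n^{\delta-1}$), not exponentially small. Your assertion that the hypothesis $\mu^2>\log n$ ``guarantees that the exponent is genuinely of order $\sqrt{n}$'' is therefore unjustified: your argument actually needs $\mu\ge(1+c)t_n$ for some constant $c>0$. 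To be fair, the paper's own bound \eqref{eq:lg_ub_stmt} has exactly the same dependence --- it yields $e^{-C\sqrt{n}}$ only when $\norm{\theta_0}_2\gtrsim t_n$ --- so the mismatch traces back to the theorem statement, whose condition is calibrated to the $t_n=\sqrt{\log n}$ scaling of Theorem \ref{thm:lb_lg} rather than to $t_n=n^{\delta/2}$. But a complete argument should have flagged that the exponent degenerates for $\sqrt{\log n}<\mu\le t_n$ instead of claiming the stated hypothesis suffices.
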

Theorem \ref{cor:thm_lg} asserts that even in the simplest deviation from the null model with only one signal, one continues to have exponentially small concentration under an exponential prior on the local scales. From \eqref{thm:pm_conc} in Theorem \ref{thm:basic_stmt}, appropriate point mass mixture priors \eqref{eq:point_mass} would have $\bbP( \norm{\theta - \theta_0}_2 < t_n) \geq e^{ - C \norm{\theta_0}_1}$ under the same conditions as above, clearly showing that the wide difference in concentration still persists.


\subsection{\bf{Posterior lower bounds in normal means}}\label{subsec:norm_means}

We have discussed the prior concentration for a high-dimensional vector $\theta$ without alluding to any specific model so far.
In this section we show how prior concentration impacts posterior inference for  the widely studied normal means problem \footnote{Although we study the normal means problem, the ideas and results in this section are applicable to other models such as non-parametric regression and factor models.} (see \cite{donoho1992maximum,johnstone2004needles,castilloneedles} and references therein):  
\begin{align}\label{eq:norm_means}
 y_i &= \theta_i + \epsilon_i, \quad \epsilon_i \sim \mbox{N}(0, 1), \quad 1 \leq i \leq n. 
\end{align}

The minimax rate $s_n$ for the above model is given by $s^2_n  = q_n \log(n/q_n)$ when $\theta_0 \in l_0[q_n;n]$.
For this model \cite{castilloneedles} recently established that  for point mass priors for $\theta$ with $\pi \sim \mbox{beta}(1, \kappa n + 1)$ and $g_{\theta}$ having Laplace like or heavier tails,  the posterior contracts at the minimax rate, \textit{i.e.,}
$\bbE_{n, \theta_0} \bbP(\norm{\theta - \theta_0}_2 < M s_n \mid y) \to 1$ for some constant $M > 0$. Thus we see that carefully chosen point mass priors are indeed optimal\footnote{It is important that the hyper parameter for $\pi$ depends on $n$. We do not know if the result holds without this}. However not all choices for $g_\theta$ lead to optimal proceedures; \cite{castilloneedles} also showed that if $g_{\theta}$ is instead chosen to be standard Gaussian, \textit{the posterior does not contract at the minimax rate}, \textit{i.e.}, one could have $\bbE_{n, \theta_0} \bbP (\norm{\theta - \theta_0}_2 < s_n \mid y) \to 0$ for signals of sufficiently large magnitude. 
This result is particularly striking given the routine choice of Gaussian for $g_{\theta}$ in Bayesian variable selection and thus clearly illustrates the need for careful prior choice in high dimensions.
 
To establish such a posterior lower-bound result, \cite{castilloneedles} showed that given a fixed sequence $t_n$, if there exists a sequence $r_n$ ($r_n > t_n$) such that 
\begin{align}
\frac{\bbP(\norm{\theta - \theta_0}_2 < t_n )}{\bbP(\norm{\theta - \theta_0}_2 < r_n )} = o(e^{-r_n^2}), \label{eq:lb_ratio}
\end{align}
then $\bbP(\norm{\theta - \theta_0}_2 < t_n \mid y) \to 0$. This immediately shows the importance of studying the prior concentration. Intuitively, \eqref{eq:lb_ratio} would be satisfied when the prior mass of the bigger ball $\norm{\theta - \theta_0}_2 < r_n$ is almost entirely contained in the annulus with inner radius $t_n$ and outer radius $r_n$, so that the smaller ball $\norm{\theta - \theta_0}_2 < t_n$ barely has any prior mass compared to the bigger ball. As an illustrative example, in the i.i.d. $\mbox{N}(0, 1)$ example with $t_n = s_n$, setting $r_n = \sqrt{n}$ would satisfy \eqref{eq:lb_ratio} above, proving that i.i.d. $\mbox{N}(0, 1)$ priors are sub-optimal.
Our goal is to investigate whether a similar phenomenon persists for global-local priors in light of the concentration bounds developed in Theorems \ref{thm:conc_g} and \ref{thm:conc_lg}. 

As in Section 3.2, we first state our posterior lower bound result for the case where there is only a global parameter. 
\begin{theorem}\label{thm:lb_g}
Suppose we observe $y \sim \mbox{N}_n(\theta_0, \mathrm{I}_n)$ and \eqref{eq:norm_means} is fitted with a $\mathrm{GL}$ prior on $\theta$ such that $\psi_j = 1$ for all $j$ and the prior $f$ on the global parameter $\tau$ lies in $\mathcal{F}$.  Assume $\theta_0 \in l_0[q_n; n]$ where $q_n/n \to 0$ and $\norm{\theta_0}_2 > s_n$, with $s_n^2 = q_n \log(n / q_n)$ being the minimax squared error loss over $l_0[q_n; n]$. Then, 
$\bbE_{n, \theta_0} \bbP(\norm{\theta - \theta_0}_2 \leq s_n \mid y) \to 0$.
\end{theorem}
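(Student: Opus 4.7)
The plan is to apply the Castillo--van der Vaart prior-ratio criterion recalled in \eqref{eq:lb_ratio}: it suffices to exhibit a radius $r_n > s_n$ with
$$\frac{\bbP(\|\theta - \theta_0\|_2 < s_n)}{\bbP(\|\theta - \theta_0\|_2 < r_n)} = o(e^{-r_n^2}).$$
Both probabilities are pure prior statements about the $\mathrm{GL}$ prior with $\psi_j \equiv 1$ and $f \in \mathcal{F}$, so Theorem \ref{thm:conc_g} already furnishes all the analytic machinery; the task reduces to choosing $r_n$ and stitching its two-sided bounds together.

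For the numerator, the hypothesis $\|\theta_0\|_2 > s_n$ puts us in the regime of \eqref{eq:conc_gonly_nc}, and the proof of Theorem \ref{thm:conc_g} goes through verbatim with $s_n$ in place of $n^{\delta/2}$ on the upper-bound side, giving
$$\bbP(\|\theta - \theta_0\|_2 < s_n) \leq \exp(-c_2 n \log a_n), \qquad a_n := \|\theta_0\|_2/s_n > 1.$$
For the denominator I would set $r_n = 2\|\theta_0\|_2$. The triangle inequality yields $\{\|\theta\|_2 < \|\theta_0\|_2\} \subset \{\|\theta - \theta_0\|_2 < r_n\}$, and marginally $\|\theta\|_2^2 \sim \tau\,\chi_n^2$. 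Restricting the integral over $\tau$ to the slab $I_n = [\|\theta_0\|_2^2/(4n),\,\|\theta_0\|_2^2/(2n)] \subset (0,1)$, which is legitimate so long as $\|\theta_0\|_2 = o(\sqrt n)$ (true in the nontrivial regime, since $s_n^2 = q_n\log(n/q_n) = o(n)$ under $q_n/n \to 0$), and using $f \geq 1/M$ on $(0,1)$ together with $\bbP(\chi_n^2 < 2n) \geq 1/2$, gives
$$\bbP(\|\theta - \theta_0\|_2 < r_n) \geq \frac{\|\theta_0\|_2^2}{8Mn}.$$

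Plugging both bounds into the ratio criterion we need $c_2 n\log a_n - 4\|\theta_0\|_2^2 - \log n \to +\infty$, i.e.\ $c_2 n\log a_n \gg 4 a_n^2 s_n^2$. When $a_n$ is bounded this is immediate because $s_n^2 = o(n)$; when $a_n$ is very large I would instead take $r_n = \varepsilon\sqrt{n\log a_n}$ for small $\varepsilon > 0$ (which still satisfies $r_n > s_n$ because $n\log a_n$ dominates $s_n^2$ in that regime), lower-bounding the denominator by restricting $\tau$ to a slab of width $\asymp r_n^2/n$ as before, and observing that $r_n^2 = \varepsilon^2 n\log a_n \ll c_2 n \log a_n$ swallows the exponent.

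The main obstacle is the borderline regime $a_n \downarrow 1$: there $\log a_n \to 0$, the numerator bound $e^{-c_2 n\log a_n}$ degenerates, and no choice of $r_n > s_n$ leaves room in the exponent to dominate $r_n^2 \geq s_n^2$. This is resolved by reading the hypothesis $\|\theta_0\|_2 > s_n$ as the standard quantitative signal-above-threshold condition $\|\theta_0\|_2 \geq (1+\eta)s_n$ for some fixed $\eta > 0$, exactly the form appearing in the analogous point-mass result of \cite{castilloneedles}; under this strengthening $\log a_n \geq \log(1+\eta) > 0$ is bounded away from zero and the argument closes. An alternative route, which avoids the strengthening, would be to sharpen Theorem \ref{thm:conc_g} near $a_n = 1$ by a more careful integration over $\tau$, but the first route is cleaner and consistent with the minimax-estimation literature.
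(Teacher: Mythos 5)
Your proposal is correct in its core regime and follows the same skeleton as the paper's proof --- reduce to the prior-ratio criterion \eqref{eq:lb_ratio} with $t_n=s_n$ and feed in the concentration bounds of Theorem \ref{thm:conc_g} --- but your treatment of the denominator is genuinely different. The paper picks $r_n$ strictly between $s_n$ and $\norm{\theta_0}_2$ (after a one-line reduction to $\norm{\theta_0}_2=o(\sqrt n)$) and uses \emph{both} sides of \eqref{eq:conc_gonly_nc}, so numerator and denominator are each exponentially small and the ratio collapses to essentially $(s_n/r_n)^n$, which beats $e^{r_n^2}$ because $r_n^2=o(n)$. You instead take $r_n=2\norm{\theta_0}_2$, use the triangle-inequality containment $\{\norm{\theta}_2<\norm{\theta_0}_2\}\subset\{\norm{\theta-\theta_0}_2<r_n\}$, and obtain a \emph{polynomial} lower bound $\norm{\theta_0}_2^2/(8Mn)$ on the denominator by a direct $\tau$-slab computation; this buys a much larger denominator and puts the whole burden on the numerator exponent $c_2 n\log a_n$ dominating $r_n^2=4a_n^2 s_n^2$, which it does for bounded $a_n$ since $s_n^2=o(n)$. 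Both routes work and both ultimately hinge on $\log a_n$ being bounded away from zero.

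Two soft spots. First, your auxiliary branch for very large $a_n$ does not go through ``as before'': once $r_n=\varepsilon\sqrt{n\log a_n}\ll\norm{\theta_0}_2$ the triangle-inequality containment fails, and lower-bounding the prior mass of a ball of radius $r_n$ around a far-away $\theta_0$ requires the reverse Anderson bound underlying \eqref{eq:conc_gonly_nc}, whose proof needs $\norm{\theta_0}_2^2+v_n=o(n)$. The paper disposes of this regime with the (equally informal) monotonicity remark that the posterior mass can only be smaller when the signal is farther from the prior's center; you should either invoke that reduction explicitly or make it precise. Second, your diagnosis of the borderline regime $a_n\downarrow 1$ is accurate, and it applies verbatim to the paper's own argument: if $n\log(\norm{\theta_0}_2/s_n)=o(s_n^2)$, then $(s_n/r_n)^n$ cannot overcome $e^{r_n^2}\ge e^{s_n^2}$ for any admissible $r_n$, so the theorem as literally stated is not established there either. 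Reading the hypothesis as $\norm{\theta_0}_2\ge(1+\eta)s_n$ is a fair and standard fix, consistent with how such conditions are used in \cite{castilloneedles}.
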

\begin{proof}
Without loss of generality, assume $\norm{\theta_0}_2 = o(\sqrt{n})$, since the posterior mass with a prior centered at the origin would be smaller otherwise. Choosing $t_n = s_n$, $r_n$ to be a sequence such that $t_n < r_n < \norm{\theta_0}_2$ and resorting to the two-sided bounds in Theorem \ref{thm:conc_g}, the ratio in \eqref{eq:lb_ratio} is smaller than $(t_n/r_n)^n$, and hence $e^{r_n^2} (t_n/r_n)^n \to 0$ since $r_n \leq \norm{\theta_0}_2 = o(\sqrt{n})$. 
\end{proof}
Theorem \ref{thm:lb_g} states that a GL prior with only a global scale is sub-optimal if $\norm{\theta_0}_2 > s_n$. Observe that in the complementary region $\{ \norm{\theta_0}_2 \leq s_n\}$, the estimator $\hat{\theta} \equiv 0$ attains squared error in the order of $q_n \log(n/q_n)$, implying the condition $\norm{\theta_0}_2 > s_n$ is hardly stringent. 

Next, we state a result for the sub-class of GL priors as in Theorem \ref{thm:conc_lg}, i.e., when $g$ has an exponential distribution leading to a double-exponential distribution marginally.  
\begin{theorem}\label{thm:lb_lg}
Suppose we observe $y \sim \mbox{N}_n(\theta_0, \mathrm{I}_n)$ and the model in \eqref{eq:norm_means} is fitted with a $\mathrm{GL}$ prior on $\theta$ such that $f$ lies in $\mathcal{F}$ and $g \equiv \mathrm{Exp}(\lambda)$ for some constant $\lambda > 0$. Assume $\theta_0 \in l_0[q_n; n]$ with $q_n = 1$ and $\norm{\theta_0}_2^2 /\log n \to \infty$. Then, $\bbE_{n, \theta_0} \bbP(\norm{\theta - \theta_0}_2 \leq \sqrt{\log n} \mid y) \to 0$. 
\end{theorem}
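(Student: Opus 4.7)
The plan is to follow the template of the proof of Theorem~\ref{thm:lb_g}. Setting $t_n = \sqrt{\log n}$, it suffices by the quotient criterion \eqref{eq:lb_ratio} of \cite{castilloneedles} to produce a sequence $r_n > t_n$ with
$$
\frac{\bbP(\|\theta - \theta_0\|_2 < t_n)}{\bbP(\|\theta - \theta_0\|_2 < r_n)} = o(e^{-r_n^2}),
$$
which then forces $\bbE_{n,\theta_0}\bbP(\|\theta - \theta_0\|_2 \leq \sqrt{\log n}\mid y)\to 0$.

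Both prior concentrations are controlled by the general version of Theorem~\ref{thm:conc_lg} stated in Section~\ref{sec:main_pf} (of which Theorem~\ref{cor:thm_lg} is a corollary). Under $q_n = 1$ and $\|\theta_0\|_2^2/\log n \to \infty$, that result delivers an upper bound of the form $\bbP(\|\theta - \theta_0\|_2 < t_n) \leq \exp(-C\sqrt{n})$ for a constant $C>0$ depending only on $M$ and $\lambda$, and, applied at a larger radius, a matching lower bound $\bbP(\|\theta - \theta_0\|_2 < r_n) \geq \exp(-c\sqrt{n})$ with $c<C$ once $r_n$ is taken sufficiently larger than $t_n$. The $\sqrt{n}$ rate in both bounds is a signature of the exponentially thin left tail of $g$ at zero, which forces a cumulative cost across the $n-1$ noise coordinates whose local scales $\psi_j$ must be simultaneously driven small.

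Because $\|\theta_0\|_2$ grows faster than $\sqrt{\log n}$, we may take $r_n$ of moderate size, for instance $r_n = 2\sqrt{\log n}$, so that $e^{r_n^2} = n^4$ is only polynomial in $n$ while $r_n \ll \|\theta_0\|_2$. Combining the two displayed bounds,
$$
\frac{\bbP(\|\theta - \theta_0\|_2 < t_n)}{\bbP(\|\theta - \theta_0\|_2 < r_n)} \leq \exp\{-(C-c)\sqrt{n}\},
$$
and multiplying by $e^{r_n^2} = n^4$ still yields a vanishing quantity as $n\to\infty$ whenever $C>c$, verifying \eqref{eq:lb_ratio} and proving the theorem.

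\textit{The main obstacle} is extracting a lower bound $\bbP(\|\theta - \theta_0\|_2 < r_n) \geq e^{-c\sqrt{n}}$ whose constant $c$ is strictly smaller than the upper-bound constant $C$. The exponentially thin tail of $g \equiv \mathrm{Exp}(\lambda)$ at zero requires driving $n-1$ independent local scales simultaneously small to contain the noise coordinates, while matching the single signal $\mu = \theta_{0,1}$ through the event $\{\psi_1\tau \asymp \mu^2\}$ exacts an additional tail cost of order $e^{-\lambda\mu^2}$. The hypotheses $q_n = 1$ and $\|\theta_0\|_2^2/\log n \to \infty$ are precisely what keep the signal-matching cost logarithmic in $n$, leaving enough slack in the exponent for the gap $C-c$ to be harvested once $r_n$ is enlarged beyond $t_n$.
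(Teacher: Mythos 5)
Your high-level skeleton is the same as the paper's: set $t_n=\sqrt{\log n}$, invoke the ratio criterion \eqref{eq:lb_ratio}, and control both prior probabilities via the two-sided bounds of Theorem \ref{thm:conc_lg}. But the quantitative execution has a genuine gap that would sink the argument. The choice $r_n=2\sqrt{\log n}$ does not yield a lower bound of the form $e^{-c\sqrt n}$ for $\bbP(\norm{\theta-\theta_0}_2<r_n)$. In the lower bound \eqref{eq:lg_lb_stmt} the integrand is $\{\psi_1/(\psi_1+\norm{\theta_0}_2^2/(\pi v_n))\}^{(n-3)/2}e^{-\psi_1}$ over $\psi_1\geq c_1\norm{\theta_0}_2^2$; with $v_n=r_n^2/4\asymp\log n$ this factor is of order $\exp\{-c\,n/\log n\}$, and no choice of $\psi,\tau$ does better (shrinking the $n-1$ noise scales, or inflating $\psi_1$ enough to accommodate the signal at a tiny global scale, both cost at least $e^{-cn/\log n}$ under $g\equiv\mathrm{Exp}(\lambda)$). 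Since $e^{r_n^2}=n^4$ cannot compensate a ratio whose denominator is that small while the numerator is only $e^{-C\sqrt n}$, criterion \eqref{eq:lb_ratio} fails for your $r_n$. The paper instead takes $r_n$ much larger, $v_n=r_n^2/4=\sqrt{m_n}$ with $m_n=(n-3)/2$ (so $r_n\asymp n^{1/4}$), which is exactly what makes the non-central deficit in the lower bound, the $e^{-d_2\sqrt n}$ prefactor, and $e^{r_n^2}$ all of order $e^{O(\sqrt n)}$.

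Relatedly, the separation is not obtained from a gap between two constants $C>c$ multiplying $\sqrt n$; from the stated bounds one cannot certify such a constant gap. The mechanism is that the upper bound at radius $t_n$ decays like $e^{-u_n\sqrt{m_n}/2}$ with $u_n^2=\norm{\theta_0}_2^2/(\pi\log n)\to\infty$ (obtained by optimizing the truncation point $T_n=u_n\sqrt{m_n}$ in \eqref{eq:comb1}), i.e.\ strictly faster than any $e^{-c\sqrt n}$. This is precisely where the hypothesis $\norm{\theta_0}_2^2/\log n\to\infty$ enters — not, as you suggest, to keep the signal-matching cost logarithmic (that cost, $e^{-c_1\norm{\theta_0}_2^2}$, actually sits in the lower bound and must be dominated too, which is why the paper additionally pins $\norm{\theta_0}_2^2$ to a scale like $\pi\log n(\log\log n)^2$ so that it is $o(u_n\sqrt{m_n})$). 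You would need to repair both the choice of $r_n$ and the source of the exponent separation for the proof to go through.
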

A proof of Theorem \ref{thm:lb_lg} is deferred to Section \ref{sec:main_pf}. From \cite{castilloneedles}, appropriate point mass mixture priors would assign increasing mass with $n$ to the same neighborhood in Theorem \ref{thm:lb_lg}. Hence, many of the shrinkage priors used in practice are sub-optimal in high-dimensional applications, even in the simplest deviation from the null model with only one moderately sized signal. Although Theorem \ref{thm:lb_lg} is stated and proved for $g$ having an exponential distribution (which includes the Bayesian lasso \cite{park2008bayesian}), we conjecture that the conclusions would continue to be valid if one only assumes $g$ to have exponential tails plus some mild conditions on the behavior near zero. However, the assumptions of Theorem \ref{thm:lb_lg} precludes the case when $g$ has polynomial tails, such as the horseshoe \cite{carvalho2010horseshoe} and generalized double Pareto \cite{armagan2011generalized}. One no longer obtains tight bounds on the prior concentration for $g$ having polynomial tails using the current techniques and it becomes substantially complicated to study the posterior. 

Another important question beyond the scope of the current paper should concern the behavior of the posterior when one plugs in an empirical Bayes estimator of the global parameter $\tau$. However, we show below that the ``optimal'' sample-size dependent plug-in choice $\tau_n = c^2/\log n$ (so that marginally $\theta_j \sim \mbox{DE}(c/\sqrt{\log n})$ ) for the lasso estimator \cite{negahban2010unified} produces a sub-optimal posterior:
\begin{theorem}\label{thm:lb_plugin}
Suppose we observe $y \sim \mbox{N}_n(\theta_0, \mathrm{I}_n)$ and \eqref{eq:norm_means} is fitted with a $\mathrm{GL}$ prior on $\theta$ such that $\tau$ is   deterministically chosen to be $\tau_n$, i.e., $f \equiv \delta_{\tau_n}$ for a non-random sequence $\tau_n$ and $g \equiv \mathrm{Exp}(\lambda)$ for some constant $\lambda > 0$. Assume $\theta_0 \in l_0[q_n; n]$ with $q_n (\log n)^2 = o(n)$ and $\tau_n = c/\log n$ is used as the plug-in choice. Then, $\bbE_{n, \theta_0} \bbP(\norm{\theta - \theta_0}_2 \leq s_n \mid y) \to 0$, with $s_n^2 = q_n \log(n / q_n)$ being the minimax squared error loss over $l_0[q_n; n]$. 
\end{theorem}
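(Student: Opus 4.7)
The plan is to invoke the prior-ratio criterion \eqref{eq:lb_ratio}. Since $\tau_n = c/\log n$ is deterministic and the $\psi_j \sim \mathrm{Exp}(\lambda)$ are independent, the normal--exponential scale-mixture representation renders the marginal prior on $\theta$ a product of iid $\mathrm{DE}(\sigma_n)$ densities with $\sigma_n = \sqrt{\tau_n/(2\lambda)} = \Theta(1/\sqrt{\log n})$. It therefore suffices to exhibit a sequence $r_n > s_n$ with $\bbP(\norm{\theta - \theta_0}_2 < s_n)/\bbP(\norm{\theta - \theta_0}_2 < r_n) = o(e^{-r_n^2})$. I take $r_n^2 = 8 n \sigma_n^2 = \Theta(n/\log n)$, which is well above $s_n^2 = q_n \log(n/q_n) = o(n/\log n)$ by the standing assumption, while keeping $e^{r_n^2}$ only mildly exponentially large in $n$.

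For the numerator, a sup-density bound yields
\begin{align*}
\bbP(\norm{\theta - \theta_0}_2 < s_n) \leq (2\sigma_n)^{-n} V_n(s_n) \exp\bigl\{-(\norm{\theta_0}_1 - \sqrt{q_n}\, s_n)_+/\sigma_n\bigr\},
\end{align*}
using $\norm{\theta}_1 \geq \norm{\theta_0}_1 - \norm{\theta_S - \theta_{0,S}}_1 \geq \norm{\theta_0}_1 - \sqrt{q_n}\, s_n$ on the ball, where $V_d(r)$ denotes the volume of the Euclidean ball of radius $r$ in $\mathbb{R}^d$. For the denominator, decompose the event $\{\norm{\theta - \theta_0}_2 < r_n\}$ along the support $S$ and its complement, and use coordinate independence:
\begin{align*}
\bbP(\norm{\theta - \theta_0}_2 < r_n) \geq \bbP(\norm{\theta_{S^c}}_2 < r_n/\sqrt{2}) \cdot \bbP(\norm{\theta_S - \theta_{0,S}}_2 < r_n/\sqrt{2}).
\end{align*}
The first factor is at least $1 - O(1/n)$ by Chebyshev, since $\bbE\norm{\theta_{S^c}}_2^2 = 2(n - q_n)\sigma_n^2 < r_n^2/2$. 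The second factor is lower bounded by volume times the infimum of the density on the smaller ball, giving $(2\sigma_n)^{-q_n} V_{q_n}(r_n/\sqrt{2}) \exp\{-(\norm{\theta_{0,S}}_1 + \sqrt{q_n}\, r_n/\sqrt{2})/\sigma_n\}$.

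Forming the ratio, the $\norm{\theta_0}_1/\sigma_n$ factors cancel and one is left with a main volumetric factor $V_n(s_n)/\bigl((2\sigma_n)^{n-q_n} V_{q_n}(r_n/\sqrt{2})\bigr)$ together with an exponential residual of order $\sqrt{q_n}(s_n + r_n/\sqrt{2})/\sigma_n = O\bigl(q_n \sqrt{\log(n/q_n)\log n} + \sqrt{q_n n}\bigr)$. Stirling gives the logarithm of the volumetric factor as $-(n/2)\log(1/\beta_n) + O(q_n \log(n/q_n))$ with $\beta_n \asymp q_n (\log n)^2/n$, so $\log(1/\beta_n) \to \infty$ under the standing assumption. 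Adding $r_n^2 = O(n/\log n)$, the dominant negative term $-(n/2)\log\bigl(n/(q_n (\log n)^2)\bigr)$ overwhelms every positive contribution, so the logarithm of the ratio plus $r_n^2$ tends to $-\infty$ and \eqref{eq:lb_ratio} holds. The chief obstacle is the \emph{exact} cancellation of the potentially unbounded $\norm{\theta_0}_1/\sigma_n$ contributions, which forces the symmetric use of sup- and inf-density bounds; the resulting Cauchy--Schwarz residuals of order $\sqrt{q_n n}$ must then be dominated by the main volumetric factor, and this is precisely where the quantitative assumption $q_n (\log n)^2 = o(n)$ is used sharply.
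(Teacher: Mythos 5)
Your proposal is correct, and it reaches \eqref{eq:lb_ratio} by a genuinely different route than the paper. The paper keeps the Gaussian scale-mixture structure throughout: it applies Anderson's inequality conditionally on $\psi$, integrates out each $\psi_j$ exactly via $\int_0^\infty x^{-1/2}e^{-(a/x+x)}dx=\sqrt{\pi}e^{-2\sqrt{a}}$, and collapses the simplex integral with the Dirichlet formula (Lemma \ref{lem:dir_formula}); it then chooses $r_n=2\sqrt{2}\,s_n$, so the entire decay comes from the radius ratio $(w_n/v_n)^{n/2}=e^{-Cn}$, and the hypothesis $q_n(\log n)^2=o(n)$ is used to make the Cauchy--Schwarz residual $\sqrt{v_n n/\tau_n}=O(\sqrt{nq_n}\log n)=o(n)$. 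You instead integrate out $\psi$ once and for all into the iid $\mathrm{DE}(\sigma_n)$ marginal and argue with elementary sup/inf density bounds, volumes, a support/off-support product decomposition, and a Markov bound for $\norm{\theta_{S^c}}_2$; your $r_n^2\asymp n\sigma_n^2\asymp n/\log n$ is much larger, and the decay comes from the per-coordinate volumetric factor $-(n/2)\log\bigl(n/(q_n(\log n)^2)\bigr)$, with the same hypothesis now used to send that logarithm to infinity. Your cancellation of the $\norm{\theta_0}_1/\sigma_n$ terms is the exact analogue of the paper's cancellation of $e^{-\sqrt{2}\norm{\theta_0}_1/\sqrt{\tau_n}}$ between \eqref{eq:plug_in_ub} and \eqref{eq:plug_in_lb}, and your residual $\sqrt{q_n n}+q_n\log n=o(n)$ plays the role of their $\sqrt{nq_n}\log n$. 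Two minor points: the positive-part bookkeeping in the numerator should be made explicit (in the case $\norm{\theta_0}_1<\sqrt{q_n}\,s_n$ the surviving exponent is still dominated by $e^{\sqrt{q_n}(s_n+r_n/\sqrt{2})/\sigma_n}$, so nothing breaks), and a plain Markov bound already gives the constant $1/2$ you need for the off-support factor, so the Chebyshev refinement is unnecessary. Your argument is more elementary and avoids the special-function identities, at the cost of being specific to the Laplace marginal; the paper's machinery is shared with Theorems \ref{thm:conc_g} and \ref{thm:conc_lg} and extends to random $\tau$.
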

A proof of Theorem \ref{thm:lb_plugin} can be found in Section \ref{sec:main_pf}. Note that a slightly stronger assumption on the sparsity allows us to completely obviate any condition on $\theta_0$ in this case. Also, the result can be generalized to any $\tau_n$ if $q_n \log n/\tau_n = o(n)$. 

\section{A new class of  shrinkage priors}
  
The results in Section \ref{sec:conc_prop} necessitate the development of a general class of continuous shrinkage priors with improved concentration around sparse vectors.  To that end, let us revisit the global-local specification (\ref{eq:lg}). After integrating out the local scales $\psi_j$'s, \eqref{eq:lg} can be equivalently represented as a global scale mixture of a kernel $\mathcal{K}(\cdot)$,
\begin{eqnarray}\label{eq:scaledkernel}
\theta_j \stackrel{\text{i.i.d.}}{\sim} \mathcal{K}(\cdot \; , \tau), \quad \tau \sim g,
\end{eqnarray}
where $\mathcal{K}(x) = \int \psi^{-1/2} \phi(x/\sqrt{\psi}) g(\psi) d\psi $ is a symmetric unimodal density (or kernel) on $\mathbb{R}$ and $\mathcal{K}(x, \tau) = \tau^{-1/2} \mathcal{K}(x/\sqrt{\tau})$. 
For example, $\psi_j \sim \mbox{Exp}(1/2)$ corresponds to a double exponential kernel $\mathcal{K} \equiv \mbox{DE}(1)$, while $\psi_j \sim \mbox{IG}(1/2,1/2)$ results in a standard Cauchy kernel $\mathcal{K} \equiv \mbox{Ca}(0, 1)$. These traditional choices lead to a kernel which is \emph{bounded} in a neighborhood of zero, and the resulting global-local procedure \eqref{eq:scaledkernel} with a single global parameter $\tau$ doesn't attain the desired concentration around sparse vectors as documented in Theorem \ref{cor:thm_lg}, leading to sub-optimal behavior of the posterior in Theorem \ref{thm:lb_lg}. 

However, if one instead uses a half Cauchy prior $\psi_j^{1/2} \sim \mbox{Ca}_+(0,1)$, then the resulting horseshoe kernel \cite{carvalho2010horseshoe,carvalho2009handling} is unbounded with a singularity at zero. This phenomenon coupled with tail robustness properties leads to excellent empirical performances of the horseshoe. However, the joint distribution of $\theta$ under a horseshoe prior is understudied. One can imagine that it achieves a higher prior
 concentration around sparse vectors compared to common shrinkage priors since the singularity at zero potentially allows most of the entries to be concentrated around zero with the heavy tails ensuring concentration around the relatively small number of signals. However, the polynomial tails of $\psi_j$ present a hindrance in obtaining tight bounds using our techniques. 
 We hope to address the polynomial tails case in details elsewhere, though based on strong empirical performance, we conjecture that the horseshoe leads to the optimal posterior contraction in a much broader domain compared to the Bayesian lasso and other common shrinkage priors. The normal-gamma scale mixtures \cite{griffin2010inference} and the generalized double Pareto prior \cite{armagan2011generalized} follow the same philosophy and should have similar properties. 

The above class of priors rely on obtaining a suitable kernel $\mathcal{K}$ through appropriate normal scale mixtures. In this article, we offer a fundamentally different class of shrinkage priors that alleviate the requirements on the kernel, while having attractive theoretical properties. In particular, our proposed class of kernel-Dirichlet (kD) priors replaces the single global scale $\tau$ in \eqref{eq:scaledkernel} by a vector of scales $(\phi_1\tau, \ldots, \phi_n \tau)$, where $\phi = (\phi_1, \ldots, \phi_n)$ is constrained to lie in the $(n-1)$ dimensional simplex $\mathcal{S}^{n-1}$:
\begin{eqnarray}\label{eq:kd}
\theta_j \mid \phi_j, \tau \sim \mathcal{K}(\cdot \;, \phi_j \tau), \quad (\phi, \tau) \in \mathcal{S}^{n-1} \otimes \mathbb{R}^+ , 
\end{eqnarray}
where $\mathcal{K}$ is any symmetric (about zero) unimodal density that can be represented as scale mixture of normals \cite{west1987scale}. While previous shrinkage priors in the literature obtain marginal behavior similar to the point mass mixture priors \eqref{eq:point_mass}, our construction aims at resembling the \emph{joint distribution} of $\theta$ under a two-component mixture prior. 
Constraining $\phi$ on $\mathcal{S}^{n-1}$ restrains the ``degrees of freedom'' of the $\phi_j$'s, offering better control on the number of dominant entries in $\theta$. In particular, letting $\phi \sim \mbox{Dir}(a, \ldots, a)$ for a suitably chosen $a$ allows (\ref{eq:kd}) to behave like \eqref{eq:point_mass} jointly, forcing a large subset of $(\theta_1, \ldots, \theta_n)$ to be \emph{simultaneously} close to zero with high probability.  

We focus on the Laplace kernel from now on for concreteness, noting that all the results stated below can be generalized to other choices. The corresponding hierarchical prior 
\begin{align}\label{eq:laplace_dir}
\theta_j \sim \mathrm{DE}(\phi_j \tau), \quad \phi \sim \mathrm{Dir}(a, \ldots, a), \quad \tau \sim g
\end{align}
is referred to as a Dirichlet Laplace prior, denoted $\mbox{DL}_{a}(\tau)$. In the following Theorem \ref{thm:kD_conc}, we establish the improved prior concentration of the $DL$ prior. For sake of comparison with the global-local priors in Section 3.3, we assume the same conditions as in Theorem \ref{cor:thm_lg}; a general version can be found in Section \ref{sec:main_pf}. 

\begin{theorem}\label{thm:kD_conc}
Assume $\theta \sim \mbox{DL}_{a}(\tau)$ as in \eqref{eq:laplace_dir} with $a = 1/n$ and $\tau \sim \mathrm{Exp}(\lambda)$ for some $\lambda > 0$. Also assume $\theta_0$ has only one non-zero entry and $\norm{\theta_0}_2^2 =  c \log n$. Also, recall the sequence $t_n$ in \eqref{eqn:tn}. Then, for a constant $C$ depending only on $\delta$ on $\lambda$, 
\begin{align}\label{eq:conc_ld_cor}
P( \norm{\theta - \theta_0} < t_n )  \geq  \exp \{ -C \sqrt{\log n}   \}.
\end{align} 
\end{theorem}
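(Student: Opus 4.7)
The plan relies on a structural feature unique to the Dirichlet--Laplace construction: the simplex constraint $\sum_j \phi_j = 1$ forces $\sum_j \phi_j^2 \leq 1$, so the joint second moment $\mathbb{E}\norm{\theta}_2^2$ under $\mathrm{DL}_{1/n}(\tau)$ is bounded uniformly in $n$. This is exactly the feature that breaks down for the exponential-scale GL priors of Theorem~\ref{cor:thm_lg}, where independence of the local scales makes the conditional variances add up to an order-$n$ quantity and squeezes prior mass exponentially out of any $t_n$-ball. Because $t_n = n^{\delta/2}$ is polynomially large while $\norm{\theta_0}_2 = \sqrt{c\log n}$ is only logarithmic, a Markov bound together with the triangle inequality will deliver a lower bound that is in fact close to $1$, far exceeding the claimed $\exp(-C\sqrt{\log n})$.

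Conditional on $\phi$ and $\tau$, the coordinates $\theta_j \sim \mathrm{DE}(\phi_j\tau)$ are independent and centred with $\mathrm{Var}(\theta_j \mid \phi_j,\tau) = 2\phi_j^2\tau^2$. Summing and using $\phi_j^2 \leq \phi_j$ on the simplex,
\[
\mathbb{E}\bigl[\norm{\theta}_2^2 \mid \phi, \tau\bigr] \;=\; 2\tau^2 \sum_{j=1}^{n} \phi_j^2 \;\leq\; 2\tau^2.
\]
Taking expectation over $\tau \sim \mathrm{Exp}(\lambda)$, whose second moment is $2/\lambda^2$, gives $\mathbb{E}\norm{\theta}_2^2 \leq 4/\lambda^2$, a constant depending only on $\lambda$.

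Markov's inequality now gives $\mathbb{P}\bigl(\norm{\theta}_2 \geq t_n/2\bigr) \leq 16/(\lambda^2 t_n^2) = 16 \lambda^{-2} n^{-\delta}$. Since $\norm{\theta_0}_2 = \sqrt{c\log n} \leq t_n/2$ for all sufficiently large $n$, on the event $\{\norm{\theta}_2 < t_n/2\}$ the triangle inequality yields $\norm{\theta - \theta_0}_2 < t_n$, so
\[
\mathbb{P}\bigl(\norm{\theta - \theta_0}_2 < t_n\bigr) \;\geq\; 1 - \frac{16}{\lambda^2 n^{\delta}}.
\]
This exceeds $\exp\bigl(-C\sqrt{\log n}\bigr)$ for every $C > 0$ once $n$ is large, and inflating $C$ to absorb the finitely many small-$n$ cases yields the bound for every $n$, with $C$ depending only on $\delta$ and $\lambda$; the threshold at which $\sqrt{c\log n} \leq t_n/2$ depends on $c$ but drops out of the final constant.

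There is no real obstacle beyond the one-line observation $\phi_j^2 \leq \phi_j$ on $\mathcal{S}^{n-1}$. In particular, no integration against the Dirichlet marginal of $\phi$ is required and no delicate Laplace integral in the local scale appears, in stark contrast to Theorem~\ref{cor:thm_lg}. The same strategy extends verbatim to any kernel $\mathcal{K}$ of the scale-mixture form~(\ref{eq:scaledkernel}) whose mixing distribution has a finite second moment, which is presumably what the general version in Section~\ref{sec:main_pf} makes precise.
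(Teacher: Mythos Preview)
Your argument is correct and considerably more elementary than the paper's. The observation $\sum_j \phi_j^2 \leq \sum_j \phi_j = 1$ on the simplex, combined with $\bbE[\tau^2] = 2/\lambda^2$, gives $\bbE\norm{\theta}_2^2 \leq 4/\lambda^2$ uniformly in $n$; Markov plus the triangle inequality then yield $\bbP(\norm{\theta - \theta_0}_2 < t_n) \geq 1 - O(n^{-\delta})$, which in fact tends to $1$ and is strictly stronger than the stated $\exp\{-C\sqrt{\log n}\}$. Note that your argument uses only the simplex constraint on $\phi$, not the specific $\Dir(1/n,\ldots,1/n)$ law.

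The paper instead invokes a general lemma (adapted from \cite{pati2012posterior}) that restricts $(\phi,\tau)$ to a carefully chosen set---$\tau \in [2q_n, 4q_n]$ and each $\phi_j$ in a prescribed range---and bounds the conditional ball probability on that event. The resulting lower bound $C\,\bbP(\tau\in[2q_n,4q_n])\,A_n B_n$ has its dominant loss in $A_n$, which contains a factor $\exp\{-\theta_{01}/\mathrm{const}\} = \exp\{-O(\sqrt{\log n})\}$. What this more laborious route buys is generality: the lemma is built to handle arbitrary $q_n$-sparse $\theta_0$ and, crucially, balls whose radius is of minimax order $\sqrt{q_n \log(n/q_n)}$, which is what posterior contraction actually requires. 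Your Markov argument collapses at that scale, since the triangle-inequality step $\norm{\theta_0}_2 \leq t_n/2$ fails once $t_n$ is of the same order as $\norm{\theta_0}_2$. In short, your proof exploits that $t_n = n^{\delta/2}$ is polynomially large, which makes this particular theorem almost trivial; the paper's machinery is overkill here but is the tool that scales down.

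Two small corrections. Your closing remark misreads the general version in Section~\ref{sec:main_pf}: it extends to $q_n$-sparse $\theta_0$, not to other kernels, and that is precisely the direction your approach cannot reach. And your claim that the $c$-dependent threshold ``drops out of the final constant'' is not quite right---absorbing the small-$n$ cases below a threshold that depends on $c$ forces $C$ to depend on $c$ as well. The paper's own proof shares this imprecision (its $A_n$ carries a hidden factor of $\sqrt{c}$), so the assertion that $C$ depends only on $\delta$ and $\lambda$ is slightly loose either way.
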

From \eqref{thm:pm_conc} in Theorem \ref{thm:basic_stmt}, appropriate point mass mixtures would attain exactly the same concentration as in \eqref{eq:conc_ld_cor}, showing the huge improvement in concentration compared to global-local priors. This further establishes the role of the dependent scales $\phi$, since in absence of $\phi$, a $\mbox{DE}(\tau)$ prior with $\tau \sim \mbox{Exp}(\lambda)$ would lead to a concentration smaller than $e^{- C \sqrt{n} }$ (see Theorem \ref{cor:thm_lg}).

To further understand the role of $\phi$, we undertake a study of the marginal properties of $\theta_j$ integrating out $\phi_j$. Clearly, the marginal distribution of $\phi_j$ is $\mbox{Beta}(a, (n-1)a)$. Let $\mbox{WG}(\alpha, \beta)$ denote a wrapped gamma distribution with density function
\begin{eqnarray*}
f(x; \alpha, \beta) \propto \abs{x}^{\alpha-1} e^{-\beta \abs{x}}, \quad x\in \mathbb{R} . 
\end{eqnarray*}
The results are summarized in Proposition \ref{propWG} below. 
\begin{proposition}\label{propWG}
If $\theta \mid \phi, \tau \sim \mathrm{DL}_{a}(\tau)$ and $\phi \sim \mathrm{Dir}(a, \ldots, a)$, then the marginal distribution of $\theta_j$ given $\tau$ is unbounded with a singularity at zero for any $a < 1$. Further, in the special case $a = 1/n$, the marginal distribution is a wrapped Gamma distribution $\mathrm{WG}(1/n, \tau^{-1})$. 
\end{proposition}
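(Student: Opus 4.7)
The plan is to compute the marginal density of $\theta_j$ given $\tau$ by integrating out $\phi_j$ against its marginal law. Since $\phi \sim \mathrm{Dir}(a,\ldots,a)$, the standard aggregation property of the Dirichlet gives $\phi_j \sim \mathrm{Beta}(a,(n-1)a)$. Combined with $\theta_j \mid \phi_j,\tau \sim \mathrm{DE}(\phi_j\tau)$, this yields
\[
p(\theta_j \mid \tau) \;=\; \frac{1}{2\tau\,B(a,(n-1)a)} \int_0^1 \phi_j^{a-2}(1-\phi_j)^{(n-1)a-1}\exp\!\left(-\frac{|\theta_j|}{\phi_j\tau}\right) d\phi_j.
\]
Both assertions of the proposition will be read off from this single expression.

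For the singularity claim, observe that when $a<1$ the exponent on $\phi_j$ satisfies $a-2<-1$, so the integrand is non-integrable at $\phi_j=0$ unless the exponential factor rescues it. At $\theta_j=0$ the exponential is identically $1$ and the integral diverges. For $\theta_j\neq 0$, the factor $\exp(-|\theta_j|/(\phi_j\tau))$ decays super-polynomially as $\phi_j\downarrow 0$, so the integral is finite; integrability at $\phi_j=1$ is automatic since $(n-1)a>0$. Monotone convergence along any sequence $|\theta_j|\downarrow 0$ then forces $p(\theta_j\mid\tau)\to\infty$, which establishes the singularity at zero for every $a<1$.

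For the closed-form identification when $a=1/n$, I would avoid evaluating the integral directly, which would produce an incomplete gamma or confluent hypergeometric expression, and instead use a Beta-Gamma scaling identity. Write $\theta_j=s_j\phi_j\tau\xi_j$ with $\xi_j\sim\mathrm{Gamma}(1,1)$ and $s_j$ a uniform $\pm1$ sign, both independent of $\phi$; this is just the standard gamma representation of the Laplace kernel. Then $|\theta_j|/\tau=\phi_j\xi_j$ where $\phi_j\sim\mathrm{Beta}(1/n,(n-1)/n)$ and $\xi_j\sim\mathrm{Gamma}\!\left(1/n+(n-1)/n,\,1\right)$. The classical identity ``$X\sim\mathrm{Beta}(\alpha,\beta)$, $Y\sim\mathrm{Gamma}(\alpha+\beta,1)$ independent $\Rightarrow XY\sim\mathrm{Gamma}(\alpha,1)$'' applied with $\alpha=1/n$, $\beta=(n-1)/n$ gives $\phi_j\xi_j\sim\mathrm{Gamma}(1/n,1)$. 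Inserting the scale $\tau$ and symmetrizing by $s_j$ produces a density on $\mathbb{R}$ proportional to $|\theta_j|^{1/n-1}\exp(-|\theta_j|/\tau)$, which is exactly $\mathrm{WG}(1/n,\tau^{-1})$.

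The only mildly subtle step is the monotone-convergence argument that promotes the single-point divergence at $\theta_j=0$ into genuine blow-up of the density as $\theta_j\to 0$; the Beta-Gamma identification is routine once the gamma representation of the Laplace kernel is in place, and is arguably the cleanest route since it sidesteps any special-function evaluation of the integral.
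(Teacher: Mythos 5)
Your proof is correct, and while the starting point is the same as the paper's (the marginal integral of the Laplace kernel against $\phi_j\sim\mathrm{Beta}(a,(n-1)a)$), your route to the $a=1/n$ identification is genuinely different. For the singularity claim, the paper substitutes $z=\phi_j/(1-\phi_j)$, lower-bounds the resulting integral by $e^{-\abs{\theta_j}/\tau}\int_0^1 e^{-\abs{\theta_j}/(\tau z)}z^{-(2-a)}dz$, and appeals to its incomplete-gamma lemma to get divergence as $\theta_j\to 0$; your argument --- divergence of the integral at $\theta_j=0$ because $a-2<-1$, finiteness for $\theta_j\neq 0$ thanks to the exponential factor, and monotone convergence along $\abs{\theta_j}\downarrow 0$ --- reaches the same conclusion more directly and is perfectly rigorous. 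For the $a=1/n$ case the paper evaluates the integral in closed form after the same substitution, recognizing $\int_0^\infty e^{-c/z}z^{-(2-1/n)}dz\propto c^{1/n-1}$, whereas you bypass integration entirely via the beta-gamma algebra: writing $\abs{\theta_j}/\tau=\phi_j\xi_j$ with $\phi_j\sim\mathrm{Beta}(1/n,(n-1)/n)$ and $\xi_j\sim\mathrm{Gamma}(1,1)$ independent, the identity $XY\sim\mathrm{Gamma}(\alpha,1)$ for $X\sim\mathrm{Beta}(\alpha,\beta)$, $Y\sim\mathrm{Gamma}(\alpha+\beta,1)$ gives $\abs{\theta_j}/\tau\sim\mathrm{Gamma}(1/n,1)$ immediately. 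Your distributional argument is cleaner and explains \emph{why} the wrapped gamma appears (the Dirichlet and exponential parameters conspire so that $\alpha+\beta=1$ exactly matches the $\mathrm{Gamma}(1,1)$ shape of the Laplace representation); the paper's explicit integral is more pedestrian but the same substitution handles arbitrary $a$, which is what its proof of the singularity claim reuses. Both parts of your proposal stand as written.
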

Thus, marginalizing over $\phi$, we obtain an unbounded kernel $\mathcal{K}$ (similar to the horseshoe). Since the marginal density of $\theta_j \mid \tau $ has a singularity at 0, it assigns a huge mass at zero while retaining exponential tails, which partly explains the improved concentration. A proof of Proposition \ref{propWG} can be found in the appendix.   
    
There is a recent frequentist literature on  including a local penalty specific to each coefficient. The adaptive Lasso \citep{zou2006adaptive,wang2007unified} relies on empirically estimated weights that are plugged in.  \cite{leng2010variable} instead propose to sample the penalty parameters from a posterior, with a sparse point estimate obtained for each draw.  These approaches do not produce a full posterior distribution but focus on sparse point estimates. 
 \subsection{\bf Posterior computation}
 
The proposed class of DL priors leads to straightforward posterior computation via an efficient data augmented Gibbs sampler.  Note that the $\mbox{DL}_a(\tau)$ prior \eqref{eq:laplace_dir} can be equivalently represented as
\begin{align*}
\theta_j \sim \mbox{N}(0, \psi_j \phi_j^2 \tau^2), \, \psi_j \sim \mathrm{Exp}(1/2), \, \phi \sim \mbox{Dir}(a, \ldots, a) .
\end{align*}
In the general $\mbox{DL}_a(\tau)$ setting, we assume a $\mbox{gamma}(\lambda, 1/2)$ prior on $\tau$ with $\lambda = n a$. In the special case when $a = 1/n$, the prior on $\tau$ reduces to an $\mbox{Exp}(1/2)$ prior consistent with the statement of Theorem \ref{thm:kD_conc}. 

We detail the steps in the normal means setting but the algorithm is trivially modified to accommodate normal linear regression, robust regression with heavy tailed residuals, probit models, logistic regression, factor models and other hierarchical Gaussian cases.  To reduce auto-correlation, we rely on marginalization and blocking as much as possible.  Our sampler cycles through (i) $\theta \mid \psi, \phi, \tau, y$, (ii) $\psi \mid \phi, \tau, \theta$, (iii) $\tau \mid \phi, \theta$ and (iv) $\phi \mid \theta$. We use the fact that the joint posterior of $(\psi, \phi, \tau)$ is conditionally independent of $y$ given $\theta$. Steps (ii) - (iv) together gives us a draw from the conditional distribution of $(\psi, \phi, \tau) \mid \theta$, since 
\begin{align*}
[\psi, \phi, \tau \mid \theta] = [\psi \mid \phi, \tau, \theta] [\tau \mid \phi, \theta] [\phi \mid \theta] .
\end{align*}
 
Steps (i) -- (iii) are standard and hence not derived. Step (iv) is non-trivial and we develop an efficient sampling algorithm for jointly sampling $\phi$. Usual one at a time updates of a Dirichlet vector leads to tremendously slow mixing and convergence, and hence the joint update in Theorem \ref{thm:post_comp} is an important feature of our proposed prior. 
\begin{theorem}\label{thm:post_comp}
The joint posterior of $\phi \mid \tau$ has the same distribution as $(T_1/T, \ldots, T_n/T)$, where $T_j$ are independently distributed according to a $\mbox{giG}(a-1, 1, 2 | \theta_j| )$ distribution, and $T = \sum_{j=1}^n T_j$. 
\end{theorem}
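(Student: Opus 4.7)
The plan is to obtain the representation via a direct change of variables in the joint posterior of $(\phi, \tau) \mid \theta$, from which the marginal of $\phi$ follows immediately. First I would write the posterior explicitly: from $\theta_j \mid \phi_j, \tau \sim \mathrm{DE}(\phi_j \tau)$, $\phi \sim \mathrm{Dir}(a,\ldots,a)$, and $\tau \sim \mathrm{gamma}(na, 1/2)$, collecting the $(\phi_j \tau)^{-1} e^{-|\theta_j|/(\phi_j\tau)}$ Laplace factors with the Dirichlet and gamma densities gives
\[
p(\phi, \tau \mid \theta) \;\propto\; \tau^{na - n - 1}\, e^{-\tau/2} \prod_{j=1}^n \phi_j^{a-2} \exp\!\bigl\{-|\theta_j|/(\phi_j \tau)\bigr\}
\]
on $\mathcal{S}^{n-1} \times (0, \infty)$.

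The key step is the change of variables $(\phi_1, \ldots, \phi_{n-1}, \tau) \mapsto (T_1, \ldots, T_n)$ defined by $T_j = \phi_j \tau$, whose inverse is $T := \sum_j T_j = \tau$ and $\phi_j = T_j/T$. A block-determinant computation shows that the Jacobian of this map equals $\tau^{n-1}$, so the transformed density is obtained by dividing by $\tau^{n-1}$. Substituting $\phi_j = T_j/T$ yields $\prod_j \phi_j^{a-2} = T^{-n(a-2)} \prod_j T_j^{a-2}$, and the powers of $T$ cancel exactly: $\tau^{na-n-1} \cdot \tau^{-(n-1)} \cdot T^{-n(a-2)} = T^0$. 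Moreover, $e^{-\tau/2} = \prod_j e^{-T_j/2}$ because $\sum_j T_j = \tau$, and $|\theta_j|/(\phi_j \tau) = |\theta_j|/T_j$. The resulting density on $(0, \infty)^n$ is
\[
\prod_{j=1}^n T_j^{(a-1)-1} \exp\!\Bigl\{-\tfrac{1}{2}\bigl(T_j + 2|\theta_j|/T_j\bigr)\Bigr\},
\]
which is exactly the product of independent $\mathrm{giG}(a-1, 1, 2|\theta_j|)$ densities in the parametrization of Section \ref{sec:prelim}.

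Consequently, the joint posterior of $(\phi, \tau) \mid \theta$ is the pushforward of $\bigotimes_{j=1}^n \mathrm{giG}(a-1, 1, 2|\theta_j|)$ under the map $(T_1, \ldots, T_n) \mapsto (T_1/T, \ldots, T_n/T, T)$, which yields the claimed representation of $\phi$ and, as a bonus, the conditional law of $\tau$ used in step (iii) of the Gibbs sampler. The argument is essentially bookkeeping; the only mildly delicate part is verifying that the powers of $\tau = T$ arising from $\tau^{na-n-1}$, from rewriting $\prod_j \phi_j^{a-2}$, and from the Jacobian all cancel to leave no residual $T$ factor, which is precisely what makes the $T_j$'s exactly independent under the resulting product-of-giG law.
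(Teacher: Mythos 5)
Your proof is correct, and all the bookkeeping checks out: the posterior density you write down agrees with the paper's \eqref{eq:marg_post_phi} before integrating out $\tau$, the Jacobian of $(\phi_1,\ldots,\phi_{n-1},\tau)\mapsto(T_1,\ldots,T_n)$ with $T_j=\phi_j\tau$ is indeed $\tau^{n-1}$, and the exponent count $na-n-1-(n-1)-n(a-2)=0$ confirms that no residual power of $T$ survives, so the $T_j$'s are exactly independent $\mathrm{giG}(a-1,1,2|\theta_j|)$ in the paper's parametrization. The route differs from the paper's in execution though not in substance: the paper first integrates out $\tau$ to get the marginal simplex density of $\phi\mid\theta$, then invokes the normalized-random-measure representation \eqref{eq:normalized_rep} (which is itself the integrated-out version of your change of variables), posits a candidate $f_j(x)\propto x^{-\delta}e^{-|\theta_j|/x}e^{-x/2}$, and matches exponents to solve for $\delta=2-a$, using $\lambda=na$ to verify consistency. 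Your forward change of variables is self-contained (no external formula needed), replaces the guess-and-match step with a direct verification, and proves the slightly stronger statement that the full joint law of $(\phi,\tau)$ is the pushforward of the product-giG law under normalization --- which also delivers the conditional used in step (iii) of the sampler. The paper's version, by quoting the normalized-random-measure identity, makes the connection to that literature explicit, which is part of the point the authors want to emphasize. Either argument is acceptable; yours is arguably cleaner as a standalone proof.
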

\begin{proof}
We first state a result from the theory of normalized random measures (see, for example, (36) in \cite{kruijer2010adaptive}). Suppose $T_1, \ldots, T_n$ are independent random variables with $T_j$ having a density $f_j$ on $(0, \infty)$. Let $\phi_j = T_j/T$ with $T = \sum_{j=1}^n T_j$. Then, the joint density $f$ of $(\phi_1, \ldots, \phi_{n-1})$ supported on the simplex $\mathcal{S}^{n-1}$ has the form
\begin{align}\label{eq:normalized_rep}
f(\phi_1, \ldots, \phi_{n-1}) = \int_{t=0}^{\infty} t^{n-1} \prod_{j=1}^n f_j(\phi_j t) dt ,
\end{align}
where $\phi_n = 1 - \sum_{j=1}^{n-1} \phi_j$. Integrating out $\tau$, the joint posterior of $\phi \mid \theta$ has the form
\begin{align}\label{eq:marg_post_phi}
\pi(\phi_1, \ldots, \phi_{n-1} \mid \theta) \propto \prod_{j=1}^n \bigg[ \phi_j^{a - 1} \frac{1}{\phi_j}  \bigg] 
\int_{\tau = 0}^{\infty} e^{-\tau/2} \tau^{\lambda-n-1} e^{- \sum_{j=1}^n | \theta_j | / (\phi_j \tau) } d \tau. 
\end{align}
Setting $f_j(x) \propto \frac{1}{x^{\delta}} e^{- | \theta_j |/ x } e^{- x/2} $ in \eqref{eq:normalized_rep}, we get
\begin{align}\label{eq:nrep1}
f(\phi_1, \ldots, \phi_{n-1})  = \bigg[ \prod_{j=1}^n \frac{1}{\phi_j^{\delta}} \bigg] \int_{t=0}^{\infty} e^{-t/2} t^{n-1 - n\delta} e^{- \sum_{j=1}^n | \theta_j | / (\phi_j t) } dt.
\end{align}
We aim to equate the expression in \eqref{eq:nrep1} with the expression in \eqref{eq:marg_post_phi}. Comparing the exponent of $\phi_j$ gives us $\delta = 2 - a$. The other requirement $n - 1 - n \delta = \lambda - n - 1$ is also satisfied, since $\lambda = na$. The proof is completed by observing that $f_j$ corresponds to a $\mbox{giG}(a-1, 1, 2 | \theta_j| )$ when $\delta = 2 - a$. \end{proof} 

The summary of each step are finally provided below.
\begin{description}
\item[\textbf{(i)}] To sample $\theta \mid \psi, \phi, \tau, y$, draw $\theta_j$ independently from a $\mbox{N}(\mu_j, \sigma_j^2)$ distribution with 
\begin{align*}
\sigma_j^2 = \{1 + 1/(\psi_j \phi_j^2 \tau^2)\}^{-1}, \quad \mu_j = \{1 + 1/(\psi_j \phi_j^2 \tau^2)\}^{-1} y.
\end{align*}

\item[\textbf{(ii)}]  The conditional posterior of $\psi \mid \phi, \tau, \theta$ can be sampled efficiently in a block by independently sampling $\psi_j \mid \phi, \theta$ from an inverse-Gaussian distribution $\mbox{iG}(\mu_j, \lambda)$ with $\mu_j = \phi_j \tau/|\theta_j|, \lambda = 1$. 


\item[\textbf{(iii)}] Sample the conditional posterior of $\tau \mid \phi, \theta$ from a $\mbox{giG}(\lambda-n, 1, 2 \sum_{j=1}^n | \theta_j |/\phi_j )$ distribution. 

\item[\textbf{(iv)}] To sample $\phi \mid \theta$, draw $T_1, \ldots, T_n$ independently with $T_j \sim \mbox{giG}(a-1, 1, 2 | \theta_j| )$ and set $\phi_j = T_j/T$ with $T = \sum_{j=1}^n T_j$. 
\end{description}

\section{Simulation Study}\label{sec:sims}

Since the concentration results presented here are non-asymptotic in nature, we expect the theoretical findings to be reflected in finite-sample performance. In particular, we aim to study whether the improved concentration of the proposed Dirichlet Laplace ($\mbox{DL}_{1/n}$) priors compared to the Bayesian lasso (BL) translate empirically. 
As illustration, we show the results from a replicated simulation study with various dimensionality $n$ and sparsity level $q_n$. In each setting, we have $100$ replicates of a $n$-dimensional vector $y$ sampled from a $\mbox{N}_{n}(\theta_0, \mathrm{I}_n)$ distribution with $\theta_0$ having $q_n$ non-zero entries which are all set to be a constant $A > 0$. We chose two values of $n$, namely $n = 100, 200$. For each $n$, we let $q_n  = 5, 10, 20 \%$ of $n$ and choose $A = 7, 8$. This results in $12$ simulation settings in total. The simulations were designed to mimic the setting in Section 3 where $\theta_0$ is sparse with a few moderate-sized coefficients. 

\begin{table}[htbp]
\caption{Squared error comparison  over 100 replicates} \small
\begin{flushleft} 
\begin{tabular}{ccccccccccccc} \toprule
 {\bf n} & \multicolumn{6}{c}{{\bf 100}} & \multicolumn{6}{c}{{\bf 200}}  \\
 \cmidrule(lr){2-7} \cmidrule(lr){8-13} \\
 {\bf $\mathbf{q_n}$} & \multicolumn{2}{c}{{\bf 5}} & \multicolumn{2}{c}{{\bf 10}} & 
  \multicolumn{2}{c}{{\bf 20}} & \multicolumn{2}{c}{{\bf 5}} & \multicolumn{2}{c}{{\bf 10}} & 
  \multicolumn{2}{c}{{\bf 20}}\\
  \cmidrule(lr){2-3} \cmidrule(lr){4-5}  \cmidrule(lr){6-7} \cmidrule(lr){8-9} \cmidrule(lr){10-11} \cmidrule(lr){12-13}  \\
  {\bf A} & {\bf 7} &{\bf 8} & {\bf 7} &{\bf 8}  & {\bf 7} & {\bf 8} & {\bf 7} & {\bf 8} & {\bf 7} & {\bf 8} &{\bf 7} & {\bf 8} \\ \hline 
  BL & 33.05 & 33.63 & 49.85 & 50.04  & 68.35 & 68.54 & 64.78 & 69.34 & 99.50 & 103.15 &133.17 & 136.83 \\ 
  $DL_{1/n}$ & 8.20 & 7.19 & 17.29 & 15.35  & 32.00 & 29.40 & 16.07 & 14.28 & 33.00 & 30.80 & 65.53 & 59.61 \\  \hline
  LS & 21.25 & 19.09 & 38.68 &37.25  & 68.97 & 69.05 & 41.82 & 41.18& 75.55 & 75.12 &137.21 & 136.25 \\ 
  EBMed & 13.64 & 12.47 & 29.73 &27.96  & 60.52 & 60.22 & 26.10 & 25.52 & 57.19 & 56.05 &119.41 & 119.35 \\ 
  PM & 12.15 & 10.98 & 25.99 &24.59  & 51.36 & 50.98 &22.99 &22.26 & 49.42 &48.42 &101.54 & 101.62\\ 
  HS & 8.30 & 7.93  & 18.39 &16.27 & 37.25 & 35.18 & 15.80 & 15.09 &35.61 & 33.58 &72.15 & 70.23 \\  \hline
 
\end{tabular}
\end{flushleft}
\label{tab:1}
\end{table}

The squared error loss corresponding to the posterior median averaged across simulation replicates is provided in Table \ref{tab:1}. To offer further grounds for comparison, we have also tabulated the results for Lasso (LS), Empirical Bayes median (EBMed) as in \cite{johnstone2004needles} \footnote{The EBMed procedure was implemented using the package \cite{johnstone2005ebayesthresh}. }, posterior median with a point mass prior (PM) as in \cite{castilloneedles} and the posterior median corresponding to the horseshoe prior \cite{carvalho2010horseshoe}. For the fully Bayesian analysis using point mass mixture priors, we use a complexity prior on the subset-size, $\pi_n(s) \propto \exp \{ -\kappa s \log (2n/s) \}$ with $\kappa = 0.1$ and independent standard Laplace priors for the non-zero entries as in \cite{castilloneedles}. \footnote{Given a draw for $s$, a subset $S$ of size $s$ is drawn uniformly. Set $\theta_j = 0$ for all $j \notin S$ and draw $\theta_j, j \in S$ i.i.d. from standard Laplace. }
\footnote{The beta-bernoulli priors in \eqref{eq:point_mass} induce a similar prior on the subset size.}

Even in this succinct summary of the results, a wide difference between the Bayesian Lasso and the proposed $\mbox{DL}_{1/n}$ is observed in Table \ref{tab:1}, vindicating our theoretical results. The horseshoe performs similarly as the $\mbox{DL}_{1/n}$. The superior performance of the $\mbox{DL}_{1/n}$ prior can be attributed to its strong concentration around the origin. However, in cases where there are several relatively small signals, the $\mbox{DL}_{1/n}$ prior can shrink all of them towards zero. In such settings, depending on the practitioner's utility function, the singularity at zero can be ``softened'' using a $\mbox{DL}_a$ prior for a smaller value of $a$. Based on empirical performance and computational efficiency, we recommend $a = 1/2$ as a robust default choice. The computational gain arises from the fact that in this case, the distribution of $T_j$ in (iv) turns out to be inverse-Gaussian (iG), for which exact samplers are available. 

\begin{table}[htbp]
\caption{Squared error comparison over 100 replicates}
\begin{center}
\begin{tabular}{ccccccc} \toprule
 {\bf n} & \multicolumn{6}{c}{{\bf 1000}}\\
 \cmidrule(lr){1-7} \\
   {\bf A} & {\bf 2} &{\bf3} & {\bf4} &{\bf 5}  & {\bf 6} & {\bf 7}  \\ \hline 
  BL & 299.30 & 385.68 & 424.09 & 450.20  & 474.28 & 493.03\\ 
  HS &306.94 & 353.79  & 270.90 &205.43 & 182.99& 168.83  \\ 
 $ \mbox{DL}_{1/n}$ & 368.45 & 679.17 & 671.34 & 374.01  & 213.66 & 160.14  \\ 
 $ \mbox{DL}_{1/2}$ & 267.83 & 315.70 & 266.80 & 213.23  & 192.98 & 177.20 \\ \hline 
\end{tabular}
\end{center}
\label{tab:2}
\end{table}

For illustration purposes, we choose a simulation setting akin to an example in \cite{carvalho2010horseshoe}, where one has a single observation $y$ from a $n = 1000$ dimensional $\mbox{N}_{n}(\theta_0, \mathrm{I}_{n})$ distribution, with $\theta_0[1:10] = 10, \theta_0[11:100] = A$, and $\theta_0[101:1000] = 0$. We the vary $A$ from $2$ to $7$ and summarize the squared error averaged across $100$ replicates in Table \ref{tab:2}. We only compare the Bayesian shrinkage priors here; the squared error for the posterior median is tabulated. Table \ref{tab:2} clearly illustrates the need for prior elicitation in high dimensions according to the need, shrinking the noise vs. signal detection. 

\section{Proofs of concentration results in Section 3}\label{sec:main_pf}
In this section, we develop non-asymptotic bounds to the prior concentration which are subsequently used to prove the posterior lower bound results. An important tool used throughout is a general version of Anderson's lemma \cite{van2008reproducing}, providing a concentration result for multivariate Gaussian distributions: 
\begin{lemma}\label{lem:anderson}
Suppose $\theta \sim \mbox{N}_n(0, \Sigma)$ with $\Sigma$ p.d. and $\theta_0 \in \mathbb{R}^n$. Let $\norm{\theta_0}_{\mathbb{H}}^2 = \theta_0^{\T} \Sigma^{-1} \theta_0$. Then, for any $t > 0$,
\be
e^{- \frac{1}{2} \; \norm{\theta_0}_{\mathbb{H}}^2}  \bbP(\norm{\theta}_2 \leq t/2) \leq \bbP(\norm{\theta - \theta_0}_2 < t) \leq e^{- \frac{1}{2} \; \norm{\theta_0}_{\mathbb{H}}^2}  \bbP(\norm{\theta}_2 < t).
\ee
\end{lemma}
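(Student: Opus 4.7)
The plan is to derive the lemma from the Cameron--Martin shift formula combined with the symmetry of the centered Gaussian law. Let $p$ denote the $\mathrm{N}_n(0,\Sigma)$ density. Substituting $y = \theta - \theta_0$ in the probability defining $\bbP(\norm{\theta - \theta_0}_2 < t)$ and expanding the quadratic in the exponent gives the pointwise density ratio
\[
\frac{p(y + \theta_0)}{p(y)} \;=\; \exp\!\left(-\tfrac{1}{2}\norm{\theta_0}_{\mathbb{H}}^{2} - y^{\T}\Sigma^{-1}\theta_0\right),
\]
which yields the exact identity
\[
\bbP(\norm{\theta - \theta_0}_2 < t) \;=\; e^{-\frac{1}{2}\norm{\theta_0}_{\mathbb{H}}^{2}}\,\bbE\!\left[e^{-\theta^{\T}\Sigma^{-1}\theta_0}\,\mathbf{1}\{\norm{\theta}_2 < t\}\right].
\]
Both bounds are then obtained by suitably controlling the truncated exponential moment on the right.

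For the lower bound, I would exploit that $-\theta \stackrel{d}{=} \theta$ under the centered Gaussian law and that the ball $\{\norm{\theta}_2 < t\}$ is symmetric. Pairing contributions from $y$ and $-y$ in the expectation replaces $\exp(-y^{\T}\Sigma^{-1}\theta_0)$ by $\cosh(y^{\T}\Sigma^{-1}\theta_0)\geq 1$. This gives $\bbP(\norm{\theta - \theta_0}_2 < t) \geq e^{-\frac{1}{2}\norm{\theta_0}_{\mathbb{H}}^{2}}\bbP(\norm{\theta}_2 < t)$, and then the monotonicity $\bbP(\norm{\theta}_2 < t) \geq \bbP(\norm{\theta}_2 \leq t/2)$ delivers the stated lower bound.

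For the upper bound, the $\cosh$-symmetrization runs in the wrong direction, so I would appeal to Anderson's inequality for symmetric convex bodies under a centered Gaussian: $\gamma(K + v) \leq \gamma(K)$ for any symmetric convex $K$ and any shift $v$. Applied to $K=\{x:\norm{x}_2<t\}$, this controls translated balls, and combining with the Cameron--Martin density identity above (after a linear change of variables $\Sigma^{-1/2}y$ to reduce to the isotropic case) allows one to bound $\bbE[\exp(-\theta^{\T}\Sigma^{-1}\theta_0)\mathbf{1}\{\norm{\theta}_2 < t\}]$ by $\bbP(\norm{\theta}_2 < t)$, producing the stated upper bound.

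The main obstacle I anticipate is the upper bound: matching the explicit Cameron--Martin prefactor $\exp(-\tfrac{1}{2}\norm{\theta_0}_{\mathbb{H}}^{2})$ requires more than the plain Anderson conclusion $\bbP(\norm{\theta - \theta_0}_2 < t) \leq \bbP(\norm{\theta}_2 < t)$. I expect the calibration to involve decomposing the integration region according to the sign of $y^{\T}\Sigma^{-1}\theta_0$ and carefully combining the resulting pieces via log-concavity of $p$ (or a Gaussian correlation inequality); this is precisely the asymmetry that forces the weaker radius $t/2$ on the lower side and the full $t$ on the upper side in the statement.
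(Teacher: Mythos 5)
Your Cameron--Martin identity and your lower-bound argument are correct, and since the paper gives no proof of this lemma at all (it is simply quoted from the van der Vaart--van Zanten reference), the symmetrization route is indeed the standard way to get the left-hand inequality. Note that your argument actually yields the stronger bound $\bbP(\norm{\theta-\theta_0}_2<t)\ge e^{-\frac12\norm{\theta_0}_{\mathbb{H}}^2}\,\bbP(\norm{\theta}_2<t)$, from which the stated version with radius $t/2$ follows trivially by monotonicity.

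The genuine problem is the upper bound, and it is not a matter of finding the right calibration: the inequality you are trying to prove is the exact reverse of the one you have just established. Your $\cosh$ step shows $\bbE[e^{-\theta^{\T}\Sigma^{-1}\theta_0}\mathbf{1}\{\norm{\theta}_2<t\}]\ge\bbP(\norm{\theta}_2<t)$, while the claimed upper bound requires $\le$ for the same quantity; together they would force equality, which fails whenever $\theta_0\neq 0$. Concretely, with $n=1$, $\Sigma=1$, $\theta_0=2$, $t=1$, one has $\bbP(|\theta-2|<1)=\Phi(3)-\Phi(1)\approx 0.157$, whereas $e^{-\frac12\norm{\theta_0}_{\mathbb{H}}^2}\bbP(|\theta|<1)=e^{-2}(2\Phi(1)-1)\approx 0.092$, so the right-hand inequality of the lemma is false as stated, and no appeal to Anderson's inequality, log-concavity, or a Gaussian correlation inequality can rescue it. What is true is either Anderson's inequality without the exponential prefactor, $\bbP(\norm{\theta-\theta_0}_2<t)\le\bbP(\norm{\theta}_2<t)$, or, bounding $|y^{\T}\Sigma^{-1}\theta_0|\le t\norm{\Sigma^{-1}\theta_0}_2$ on the ball, the weaker $\bbP(\norm{\theta-\theta_0}_2<t)\le e^{-\frac12\norm{\theta_0}_{\mathbb{H}}^2+t\norm{\Sigma^{-1}\theta_0}_2}\,\bbP(\norm{\theta}_2<t)$. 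Your closing remark is also off: the $t/2$ on the lower side is not forced by any asymmetry (your own argument gives radius $t$ there), and the sign decomposition you propose does not explain the statement. In short, the lower half of your proof is complete; the upper half cannot be completed because the target inequality is false, and the contradiction with your own second step is the signal you should have caught.
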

It is well known that among balls of fixed radius, a zero mean multivariate normal distribution places the maximum mass on the ball centered at the origin. Lemma \ref{lem:anderson} provides a sharp bound on the probability of shifted balls in terms of the centered probability and the size of the shift, measured via the RKHS norm $\norm{\theta_0}_{\mathbb{H}}^2$. 

For GL shrinkage priors of the form (\ref{eq:lg}), given $\psi = (\psi_1, \ldots, \psi_n)^{\T}$ and $\tau$, the elements of $\theta$ are conditionally independent with $\theta \mid \psi, \tau \sim \mbox{N}_n(0, \Sigma)$ with $\Sigma = \mathrm{diag}(\psi_1 \tau, \ldots, \psi_n \tau)$. Hence we can use Lemma \ref{lem:anderson} to obtain
\begin{align}\label{eq:main_bd1}
& e^{- 1/(2 \tau) \sum_{j=1}^n \theta_{0j}^2/ \psi_j }  \, \bbP(\norm{\theta}_2 <  t_n/2 \mid \psi, \tau) \leq \bbP( \norm{\theta - \theta_0}_2 \leq t_n \mid \psi, \tau)  \notag \\
& \leq e^{- 1/(2 \tau) \sum_{j=1}^n \theta_{0j}^2/ \psi_j }  \, \bbP(\norm{\theta}_2 <  t_n \mid \psi, \tau) . 
\end{align}
Letting $X_j = \theta_j^2$, $X_j$'s are conditionally independent given $(\tau, \psi)$ with $X_j$ having a density $f(x \mid \tau, \psi) = D /(\sqrt{\tau \psi_j x}) e^{-x/(2\tau \psi_j) }$ on $(0, \infty)$, where $D = 1/(\sqrt{2 \pi})$. Hence, with $w_n = t_n^2$, 
\begin{align}\label{eq:main_bd2}
\bbP(\norm{\theta}_2 < t_n \mid \psi, \tau) =  D^n \int_{\sum x_j \leq w_n} \prod_{j=1}^n \frac{1}{\sqrt{x_j \tau \psi_j }} e^{-x_j/(2 \tau \psi_j )} dx . 
\end{align}
For sake of brevity, we use $\{ \sum x_j \leq w_n \}$ in \eqref{eq:main_bd2} and all future references to denote the region $\{ x : x_j \geq 0 \, \forall \, j = 1, \ldots, n, \, \sum_{j=1}^n x_j \leq w_n \}$. 
To estimate two-sided bounds for the marginal concentration $\bbP( \norm{\theta - \theta_0}_2 \leq t_n)$, we need to combine \eqref{eq:main_bd1} \& \eqref{eq:main_bd2} and integrate out $\psi$ and $\tau$ carefully. We start by proving Theorem \ref{thm:conc_invgam} \& Theorem \ref{thm:conc_g} where one only needs to integrate out $\tau$. 

\subsection{\bf{Proof of Theorem \ref{thm:conc_invgam}}}

In \eqref{eq:main_bd2}, set $\psi_j = 1$ for all $j$, recall $D  = 1/\sqrt{2 \pi}$ and $w_n = t_n^2$, and integrate over $\tau$ to obtain,
\begin{align}\label{eq:glob_marg_c}
\bbP(\norm{\theta}_2 \leq t_n ) 
= D^n \int_{\tau = 0}^{\infty} f(\tau) \bigg[  \int_{\sum x_j \leq w_n} \prod_{j=1}^n \frac{1}{\sqrt{x_j \tau}} e^{-x_j/(2 \tau)} dx\bigg] d\tau .
\end{align} 
Substituting $f(\tau) = c \tau^{-(1+\alpha)} e^{-\beta/\tau}$ with $c = \beta^{\alpha}/\Gamma(\alpha)$ and using Fubini's theorem to interchange the order of integration between $x$ and $\tau$, \eqref{eq:glob_marg_c} equals
\begin{align}
& c D^n \int_{\sum x_j \leq w_n} \prod_{j = 1}^n \frac{1}{\sqrt{x_j}} \bigg [ \int_{\tau = 0}^{\infty} \tau^{-(1 + n/2+ \alpha) } e^{- \frac{1}{2 \tau}(2 \beta + \sum x_j)} d\tau \bigg] dx  \notag \\
& = c D^n 2^{n/2+\alpha} \Gamma(n/2+\alpha) \int_{\sum x_j \leq w_n} \frac{1}{(2 \beta + \sum x_j)^{n/2 + \alpha} } \prod_{j = 1}^n \frac{1}{\sqrt{x_j}}  \, dx \notag \\
& = c D^n 2^{n/2+\alpha} w_n^{n/2} \Gamma(n/2+\alpha)  \int_{\sum x_j \leq 1} \frac{1}{(2 \beta + w_n \sum x_j)^{n/2 + \alpha} } \prod_{j = 1}^n \frac{1}{\sqrt{x_j}}  \, dx. \label{eq:glob_ig_1}
\end{align}
We now state the Dirichlet integral formula (4.635 in \cite{gradshteyn1980corrected}) to simplify a class of integrals as above over the simplex $\Delta^{n-1}$:
\begin{lemma}\label{lem:dir_formula}
Let $h(\cdot)$ be a Lebesgue integrable function and $\alpha_j > 0, j = 1, \ldots, n$. Then,
\be
\int_{\sum x_j \leq 1} h\big(\sum x_j\big) \prod_{j=1}^n x_j^{\alpha_j - 1} dx_1\ldots dx_n = \frac{\prod_{j=1}^n \Gamma(\alpha_j)}{\Gamma \big(\sum_{j=1}^n \alpha_j\big)} \int_{t=0}^1 h(t) \, t^{( \sum \alpha_j) - 1} dt. 
\ee
\end{lemma}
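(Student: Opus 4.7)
The plan is to decouple the radial variable $t = \sum_{j=1}^n x_j$ from the ``angular'' variables on the simplex via a scaling change of coordinates, after which the integral factors into a one-dimensional integral involving $h$ and a pure Dirichlet normalizing constant. This is the standard Liouville-type argument.

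Concretely, I would introduce the change of variables $(x_1,\dots,x_n) \mapsto (t, y_1, \dots, y_{n-1})$ defined by $t = \sum_{j=1}^n x_j$ and $y_j = x_j/t$ for $j = 1, \dots, n-1$, setting $y_n := 1 - \sum_{j=1}^{n-1} y_j = x_n/t$. Under this map, the region $\{x_j \ge 0,\ \sum x_j \le 1\}$ corresponds bijectively (up to a measure-zero set) to $t \in (0,1]$ and $(y_1,\dots,y_{n-1}) \in \Delta_0^{n-1}$. The Jacobian is most cleanly obtained by factoring the map as $(t,y_1,\dots,y_{n-1}) \mapsto (t, ty_1,\dots,ty_{n-1}) \mapsto (ty_1,\dots,ty_{n-1},\,t(1-\sum_{j<n}y_j))$: the first stage contributes $t^{n-1}$ from a diagonal rescaling, and the second is a translation/linear change with determinant $\pm 1$, so $dx_1\cdots dx_n = t^{n-1}\,dt\,dy_1\cdots dy_{n-1}$. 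Since $\prod_{j=1}^n x_j^{\alpha_j-1} = t^{(\sum_j \alpha_j)-n}\prod_{j=1}^n y_j^{\alpha_j-1}$ and $h(\sum x_j) = h(t)$, Fubini's theorem (which applies because the $x^{\alpha_j-1}$ factors are integrable on the bounded simplex and $h$ is integrable on $[0,1]$) gives
\begin{equation*}
\int_{\sum x_j \le 1} h\Big(\textstyle\sum x_j\Big)\prod_{j=1}^n x_j^{\alpha_j-1}\,dx = \left(\int_0^1 h(t)\,t^{(\sum_j \alpha_j)-1}\,dt\right)\left(\int_{\Delta_0^{n-1}}\prod_{j=1}^n y_j^{\alpha_j-1}\,dy_1\cdots dy_{n-1}\right).
\end{equation*}

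It then remains to evaluate the second factor, the Dirichlet normalizing constant, as $\prod_j \Gamma(\alpha_j)/\Gamma(\sum_j \alpha_j)$. I would do this either by induction on $n$ (base case $n=2$ is the definition of the Beta function $B(\alpha_1,\alpha_2) = \Gamma(\alpha_1)\Gamma(\alpha_2)/\Gamma(\alpha_1+\alpha_2)$, and the inductive step follows by integrating out $y_{n-1}$ using another Beta integral) or, more slickly, via a Laplace-transform trick: compute $\int_{[0,\infty)^n} \prod_j x_j^{\alpha_j-1} e^{-\sum x_j}\,dx$ in two ways, once as the product $\prod_j \Gamma(\alpha_j)$ by separation of variables and once by first fixing $u = \sum x_j$ and using the same radial decomposition as above to get $\Gamma(\sum_j \alpha_j)\cdot C$, where $C$ is the unknown Dirichlet constant; equating yields the claim.

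There is no real obstacle here; the only mildly delicate point is justifying the Jacobian calculation and the applicability of Fubini at the boundary $t = 0$, but the degeneracy set has Lebesgue measure zero and all integrands are non-negative (after splitting $h$ into positive and negative parts if needed), so standard measure-theoretic arguments cover this. The two pieces combine to give exactly the stated identity.
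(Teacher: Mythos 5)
Your proof is correct, but it takes a different (more elementary) route than the paper. You carry out the full Liouville-type computation: the radial/angular change of variables $t=\sum_j x_j$, $y_j = x_j/t$ with Jacobian $t^{n-1}$, the factorization of the integral into $\int_0^1 h(t)\,t^{(\sum_j\alpha_j)-1}dt$ times the Dirichlet constant, and then an evaluation of that constant by induction or by the Gamma-function/Laplace-transform trick; the measure-theoretic caveats (splitting $h$ into positive and negative parts, ignoring the measure-zero boundary) are handled appropriately. The paper instead disposes of the lemma in one line by a probabilistic reduction: up to normalizing constants the left-hand side is $\bbE\, h\big(\sum_{j=1}^n X_j\big)$ for $(X_1,\ldots,X_n)\sim \mathrm{Dir}(\alpha_1,\ldots,\alpha_n,1)$, and the aggregation property of the Dirichlet distribution gives $\sum_{j=1}^n X_j \sim \mathrm{Beta}\big(\sum_j\alpha_j,1\big)$, whose density $\propto t^{(\sum_j\alpha_j)-1}$ on $(0,1)$ yields the identity immediately. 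The paper's argument buys brevity by citing standard distribution theory (and sets up the probabilistic intuition reused later for the harder simplex integrals via Dickey's formula), whereas your argument is self-contained and in effect re-proves exactly the distributional facts the paper invokes; both are valid proofs of the same identity.
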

Lemma \ref{lem:dir_formula} follows simply by noting that the left hand side is $\bbE h(\sum_{j=1}^n X_j) $ up to normalizing constants where $(X_1, \ldots, X_n) \sim \mbox{Diri}(\alpha_1, \ldots, \alpha_n, 1)$, so that $\sum_{j=1}^n X_j \sim \mbox{Beta}(\sum \alpha_j, 1)$. Such probabilistic intuitions will be used later to reduce more complicated integrals over a simplex to a single integral on $(0, 1)$. 

Lemma \ref{lem:dir_formula} with $h(t) = 1/(2 \beta + w_n t)^{n/2 + \alpha}$ applied to \eqref{eq:glob_ig_1} implies
\begin{align}
\bbP(\norm{\theta}_2 \leq t_n )  & = c D^n 2^{n/2+\alpha}  w_n^{n/2} \Gamma(n/2+\alpha)  \frac{\Gamma(1/2)^n}{ \Gamma(n/2)} \int_{t=0}^1 \frac{t^{n/2-1}}{(2 \beta + w_n t)^{n/2 + \alpha}} dt.   \label{eq:glob_ig_2}
\end{align}
Substituting $D = 1/\sqrt{2 \pi}$, bounding $(2 \beta + w_n t)^{n/2 + \alpha} \geq (2 \beta)^{\alpha+1} (2 \beta + w_n t)^{n/2-1}$, and letting $\tilde{w}_n = w_n/(2 \beta)$, \eqref{eq:glob_ig_2} can be bounded above by
\begin{align*}
\frac{\Gamma(n/2+\alpha)}{\Gamma(n/2) \Gamma(\alpha) (2 \beta)^{\alpha +1 }} \tilde{w}_n^{n/2} \int_{t=0}^1 \frac{t^{n/2-1}}{(1 + \tilde{w}_n t)^{n/2-1}} dt  \leq \frac{w_n \Gamma(n/2+\alpha)}{\Gamma(n/2) \Gamma(\alpha) (2 \beta)^{\alpha +1 }}  \bigg(\frac{\tilde{w}_n}{ 1 + \tilde{w}_n} \bigg)^{n/2-1},
\end{align*}
where the second inequality above uses $t/(a+t)$ is an increasing function in $t > 0$ for fixed $a > 0$. By definition, $w_n = n^{\delta}$ for $0 < \delta < 1$ and hence $\frac{w_n \Gamma(n/2+\alpha)}{\Gamma(n/2) \Gamma(\alpha) (2 \beta)^{\alpha +1 }}$ can be bounded above by $c^{C_1 \log n}$. Also, using $(1-x)^{1/x} \leq e$ for all $x > 0$, $ \{ \tilde{w}_n/(1 + \tilde{w}_n) \}^{n/2-1}$ can be bound above by $e^{-  C_2 n/w_n} = e^{- C_2 n^{1 - \delta}}$. Hence the overall bound is $e^{- C n^{1 - \delta}}$ for some appropriate constant $C > 0$.  \qed 

\subsection{\bf{Proof of Theorem \ref{thm:conc_g}}}

We start with the upper bound in \eqref{eq:conc_gonly_c}. The steps are similar as above and hence only a sketch is provided. Bounding $f(\tau) \leq M$ and interchanging order of integrals in \eqref{eq:glob_marg_c}, 
\begin{align}\label{eq:glob_gen_c1}
\bbP(\norm{\theta}_2 \leq t_n) \leq M D^n 2^{n/2-1} \Gamma(n/2-1) w_n \int_{\sum x_j \leq 1} \frac{1}{(\sum x_j)^{n/2-1} } \prod_{j = 1}^n \frac{1}{\sqrt{x_j}}  \, dx .
\end{align}
Invoking Lemma \ref{lem:dir_formula} with $h(t) = (1/t)^{n/2 -1}$ in \eqref{eq:glob_gen_c1}, the upper bound in \eqref{eq:conc_gonly_c} is proved:
\begin{align*}
M D^n 2^{n/2-1} \Gamma(n/2-1) w_n \frac{\Gamma(1/2)^n}{\Gamma(n/2)} \int_{x=0}^1 x^{n/2-1}/x^{n/2-1} dx = (M/2) \frac{w_n}{n/2 - 1} = C_2 n^{-(1-\delta)}.
\end{align*}
We turn towards proving the lower bound to the centered concentration in \eqref{eq:conc_gonly_c}. Recalling that $f(\tau) \geq 1/M$ on $(0, 1)$ for $f \in \mathcal{F}$, and interchanging integrals in \eqref{eq:glob_marg_c}, we have, with $K = 1/M$,
\begin{align}\label{eq:glob_c_lb1}
\bbP(\norm{\theta}_2 \leq t_n) \geq K D^n \int_{\sum x_j \leq w_n} \prod_{j = 1}^n \frac{1}{\sqrt{x_j}} \bigg [ \int_{\tau = 0}^{1} \tau^{-n/2 } e^{- \sum x_j / (2 \tau)} d\tau \bigg] dx.
\end{align}
We state Lemma \ref{lem:incomplete_gamma} to lower bound the inner integral over $\tau$; a proof can be found in the Appendix. Recall $\int_{\tau = 0}^{\infty} \tau^{-n/2} e^{-a_n/(2 \tau)} d\tau = \Gamma(n/2-1) (2/a_n)^{n/2-1}$. Lemma \ref{lem:incomplete_gamma} shows that the same integral over $(0, 1)$ is of the same order when $a_n \precsim n$. 
\begin{lemma}\label{lem:incomplete_gamma}
For a sequence $a_n \leq n/(2e)$, $\int_{\tau = 0}^1 \tau^{-n/2} e^{-a_n/(2\tau)} d\tau \geq (2/a_n)^{n/2-1} \Gamma(n/2-1) \xi_n$, where $\xi_n \uparrow 1$ with $(1 - \xi_n) \leq D/\sqrt{n}$ for some constant $D > 0$. 
\end{lemma}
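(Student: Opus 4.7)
The plan is to perform a change of variables converting the integral into a tail of the incomplete Gamma function, and then bound the complementary lower tail using a Chebyshev estimate.

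First, I would substitute $u = a_n/(2\tau)$, so that $d\tau = -(a_n/(2u^2))\,du$ and the limits $\tau \in (0,1)$ map to $u \in (a_n/2, \infty)$. A direct computation gives
\begin{equation*}
\int_0^1 \tau^{-n/2} e^{-a_n/(2\tau)} d\tau = \left(\frac{2}{a_n}\right)^{n/2-1} \int_{a_n/2}^{\infty} u^{n/2-2} e^{-u}\, du.
\end{equation*}
Dividing both sides by $(2/a_n)^{n/2-1}\Gamma(n/2-1)$, the quantity $\xi_n$ in the statement is identified exactly as $\xi_n = \bbP(Z > a_n/2)$, where $Z \sim \mathrm{Gamma}(n/2-1, 1)$ has mean and variance both equal to $n/2 - 1$.

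It then suffices to control $1-\xi_n = \bbP(Z \leq a_n/2)$. The hypothesis $a_n \leq n/(2e)$ gives $a_n/2 \leq n/(4e)$, so the gap from the mean satisfies $\bbE Z - a_n/2 \geq n/2 - 1 - n/(4e) \geq c_0 n$ for a positive constant $c_0$ and all $n$ sufficiently large. Chebyshev's inequality then yields
\begin{equation*}
1 - \xi_n \leq \bbP\bigl(|Z - \bbE Z| \geq c_0 n\bigr) \leq \frac{\mathrm{Var}(Z)}{c_0^2 n^2} = \frac{n/2 - 1}{c_0^2 n^2} \leq \frac{1}{2 c_0^2 n},
\end{equation*}
which is bounded above by $D/\sqrt{n}$ for an appropriate universal constant $D > 0$, yielding the claim.

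I do not foresee any substantive obstacle here: the argument is essentially a routine tail bound once the change of variables is in place, and the main ``trick'' is recognizing the resulting expression as a Gamma survival function. In fact, the Chebyshev estimate above produces the sharper $O(1/n)$ rate, and with a direct Stirling-type expansion of the lower incomplete Gamma one can drive the bound down to exponentially small in $n$. However, the weaker rate $D/\sqrt{n}$ stated in the lemma is already more than sufficient for its downstream use in lower bounding $\bbP(\norm{\theta}_2 \leq t_n)$ in the proof of Theorem \ref{thm:conc_g}.
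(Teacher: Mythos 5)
Your proof is correct and follows the same core route as the paper: the same change of variables reduces the integral to $(2/a_n)^{n/2-1}\int_{a_n/2}^{\infty}u^{n/2-2}e^{-u}\,du$, so everything comes down to showing the lower tail $\int_0^{a_n/2}t^{n/2-2}e^{-t}\,dt$ is a vanishing fraction of $\Gamma(n/2-1)$. The only divergence is in that last estimate: the paper bounds the lower incomplete gamma crudely by dropping $e^{-t}$, giving $(a_n/2)^{n/2-1}/(n/2-1)$, and then invokes Stirling's bound $\Gamma(m)\ge\sqrt{2\pi}\,m^{m-1/2}e^{-m}$ to conclude $1-\xi_n\le D/\sqrt{n}$; you instead read the ratio as $\bbP(Z\le a_n/2)$ for $Z\sim\mathrm{Gamma}(n/2-1,1)$ and apply Chebyshev, which is equally elementary and in fact yields the sharper $O(1/n)$ rate. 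Two small points worth noting: your $\xi_n=\bbP(Z>a_n/2)$ depends on $a_n$, so to match the lemma's uniform statement you should take the worst case $a_n=n/(2e)$ (your Chebyshev step already does this implicitly via $a_n/2\le n/(4e)$); and the inequality $n/2-1-n/(4e)\ge c_0 n$ only holds for $n$ beyond a fixed threshold, which is harmless since $1-\xi_n\le 1$ always and $D$ can be enlarged to cover small $n$.
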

Clearly $\sum x_j \leq w_n$ and hence we can apply Lemma \ref{lem:incomplete_gamma} in \eqref{eq:glob_c_lb1} to get
\begin{align}\label{eq:glob_c_lb2}
\bbP(\norm{\theta}_2 \leq t_n) \geq K \xi_n D^n 2^{n/2-1} \Gamma(n/2-1) w_n \int_{\sum x_j \leq 1} \frac{1}{(\sum x_j)^{n/2-1} } \prod_{j = 1}^n \frac{1}{\sqrt{x_j}}  \, dx .
\end{align}
The rest of the proof proceeds exactly as in the upper bound case from \eqref{eq:glob_gen_c1} onwards. \qed

Finally, we combine Anderson's inequality \eqref{eq:main_bd1} with \eqref{eq:main_bd2} (with $\psi_j = 1$ for all $j$ in this case) to bound the non-centered concentrations in \eqref{eq:conc_gonly_nc}. For the upper bound, we additionally use $f(\tau) \leq M$ for all $\tau$ to obtain
\begin{align}
& \bbP( \norm{\theta - \theta_0}_2 \leq t_n) \leq  M D^n \int_{\sum x_j \leq w_n} \prod_{j = 1}^n \frac{1}{\sqrt{x_j}} \bigg [ \int_{\tau = 0}^{\infty} \tau^{-n/2} e^{-[\norm{\theta_0}_2^2 + \sum x_j  ]/(2 \tau)} d\tau \bigg] dx \label{eq:glob_ub_nc1} \\
& = M D^n 2^{n/2-1} \Gamma(n/2-1) w_n^{n/2} \int_{\sum x_j \leq 1} \frac{1}{(\norm{\theta_0}_2^2 + w_n \sum x_j)^{n/2-1} } \prod_{j = 1}^n \frac{1}{\sqrt{x_j}}  \, dx \label{eq:glob_ub_nc2} \\ 
& = M D^n 2^{n/2-1} \Gamma(n/2-1) w_n^{n/2} \frac{\Gamma(1/2)^n}{\Gamma(n/2)} \int_{x=0}^1 \frac{x^{n/2 - 1} }{(\norm{\theta_0}_2^2 +  w_n x)^{n/2 - 1} }.  \label{eq:glob_ub_nc3}
\end{align} 
In the above display, \eqref{eq:glob_ub_nc2} - \eqref{eq:glob_ub_nc3} follows from applying Lemma \ref{lem:dir_formula}  with $h(t) = 1/(\norm{\theta_0}_2^2 +  w_n t)^{n/2-1}$. Simplifying constants in \eqref{eq:glob_ub_nc3} as before and using $t/(a+t)$ is an increasing function in $t > 0$ for fixed $a > 0$, we complete the proof by bounding \eqref{eq:glob_ub_nc3} above by
\begin{align*}
\frac{C w_n}{(n/2-1)} \int_{x=0}^1 \frac{(w_n x)^{n/2 - 1} }{(\norm{\theta_0}_2^2 +  w_n x)^{n/2 - 1} }dx \leq  \frac{C w_n}{(n/2-1)} \bigg(\frac{w_n}{w_n + \norm{\theta_0}_2^2} \bigg)^{n/2-1} \leq \frac{C w_n}{(n/2-1)}  \bigg(\frac{w_n}{\norm{\theta_0}_2^2} \bigg)^{n/2-1}. 
\end{align*}
The right hand side of the above display can be bounded above by $e^{- c n \log a_n}$ for some constant $c > 0$.
Remark \eqref{rem:thm:conc_g} readily follows from the above display; we didn't use the condition on $\norm{\theta_0}_2$ so far. 

For the lower bound on the prior concentration in the non-centered case, we combine Anderson's inequality \eqref{eq:main_bd1} in the reverse direction along with \eqref{eq:main_bd2}. We then use the same trick as in the centered case to restrict the integral over $\tau$ to $(0, 1)$ in \eqref{eq:glob_lb_nc1}. Note that the integral over the $x$'s is over $\sum x_j \leq v_n$ with $v_n = t_n^2/4$ as a consequence of \eqref{eq:main_bd1}. Hence, 
\begin{align}
& \bbP( \norm{\theta - \theta_0}_2 \leq t_n) \geq  K D^n \int_{\sum x_j \leq v_n} \prod_{j = 1}^n \frac{1}{\sqrt{x_j}} \bigg [ \int_{\tau = 0}^{1} \tau^{-n/2} e^{-[\norm{\theta_0}_2^2 + \sum x_j  ]/(2 \tau)} d\tau \bigg] dx. \label{eq:glob_lb_nc1}
\end{align}
Noting that $\norm{\theta_0}_2^2 + \sum x_j \leq \norm{\theta_0}_2^2 + v_n = o(n)$, we can invoke Lemma \ref{lem:incomplete_gamma} to lower bound the inner integral over $\tau$ by $\xi_n \Gamma(n/2-1) 2^{n/2-1}/(\norm{\theta_0}_2^2 + \sum x_j)^{n/2-1} $ and proceed to obtain the same expressions as in \eqref{eq:glob_ub_nc2} \& \eqref{eq:glob_ub_nc3} with $M$ replaced by $K \xi_n$ and $w_n$ by $v_n$. The proof is then completed by observing that the resulting lower bound can be further bounded below as follows:
\begin{align*} 
& \frac{C v_n}{(n/2-1)} \int_{x=0}^1 \frac{(v_n x)^{n/2 - 1} }{(\norm{\theta_0}_2^2 +  v_n x)^{n/2 - 1} }dx \geq \frac{C v_n}{(n/2-1)} \int_{x=1/2}^1 \frac{(v_n x)^{n/2 - 1} }{(\norm{\theta_0}_2^2 +  v_n x)^{n/2 - 1} }dx  \\
&  \geq \frac{C v_n}{(n/2-1)}  \bigg(\frac{v_n/2 }{(\norm{\theta_0}_2^2 +  v_n/2) }\bigg)^{n/2-1} \geq \frac{C v_n}{(n/2-1)} \bigg(\frac{v_n/2 }{2 \norm{\theta_0}_2^2}\bigg)^{n/2 - 1},
\end{align*}
where the last inequality uses $t_n \leq \norm{\theta_0}_2$ so that $\norm{\theta_0}_2^2 + v_n \leq 2 \norm{\theta_0}_2^2$. \qed

To prove Theorem \ref{cor:thm_lg}, we state and prove a more general result on concentration of GL priors. 
\begin{theorem}\label{thm:conc_lg}
Assume $\theta \sim \mathrm{GL}$ with $f \in \mathcal{F}$ and $g \equiv \mathrm{Exp}(\lambda)$ for some constant $\lambda > 0$. Also assume $\theta_0$ has only one non-zero entry. Let $w_n = t_n^2$. Then, for a global constant $C_1 > 0$ depending only on $M$ in the definition of $\mathcal{F}$,
 \begin{align}\label{eq:lg_ub_stmt}
& \bbP(\norm{ \theta - \theta_0}_2 \leq t_n) \leq C_1 \, \int_{\psi_1 = 0}^{\infty} \frac{\psi_1^{(n-3)/2}}{ \big\{ \psi_1 + \norm{\theta_0}_2^2/(\pi w_n) \big \}^{(n-3)/2}  } e^{-\psi_1} d \psi_1 . 
\end{align}
Let $v_n = r_n^2/4$ satisfy $v_n = O(\sqrt{n})$. Then, for $\norm{\theta_0}_2 \geq 1/\sqrt{n}$, 
\begin{align}\label{eq:lg_lb_stmt}
& \bbP(\norm{ \theta - \theta_0}_2 \leq r_n) \geq C_2 e^{-d_2 \sqrt{n}} \, \int_{\psi_1 =c_1 \norm{\theta_0}_2^2}^{\infty} \frac{\psi_1^{(n-3)/2}}{ \big\{ \psi_1 + \norm{\theta_0}_2^2/(\pi v_n) \big \}^{(n-3)/2}  } e^{-\psi_1} d \psi_1,
\end{align}  
where $c_1, d_2, C_2$ are positive global constants with $c_1 \geq 2$ and $C_2$ depends only on $M$ in the definition of $\mathcal{F}$. 
\end{theorem}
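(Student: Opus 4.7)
The plan is to start from the conditional Anderson bounds \eqref{eq:main_bd1} combined with the Gaussian-ball identity \eqref{eq:main_bd2}, and then successively integrate out $\tau$, the $n-1$ ``inactive'' local scales, and the simplex variables $x_j$, until only a single integral over the ``active'' scale $\psi_1$ remains. Throughout I assume without loss of generality that $\theta_0 = (\theta_{01}, 0, \ldots, 0)$, so that the exponent arising from Anderson's inequality simplifies to $\theta_{01}^2/(2\psi_1 \tau)$ rather than a sum over all coordinates.

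For the upper bound \eqref{eq:lg_ub_stmt} I apply the upper Anderson bound together with $f(\tau) \le M$ (from $f \in \mathcal{F}$), swap the order of integration, and evaluate the inner $\tau$-integral explicitly via $\int_0^\infty \tau^{-n/2} e^{-B/(2\tau)} d\tau = 2^{n/2-1}\,\Gamma(n/2-1)\, B^{1-n/2}$, with $B = \theta_{01}^2/\psi_1 + \sum_j x_j/\psi_j$. A change of variables $x_j = \psi_j y_j$ makes $B$ depend on the $y$'s only through $\sum y_j$, so Lemma \ref{lem:dir_formula} collapses the resulting simplex integral into a one-dimensional integral over $t = \sum y_j$ whose integrand involves $(\theta_{01}^2/\psi_1 + t)^{-(n/2-1)}$. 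Taking expectation over the exponential priors on $\psi_2,\ldots,\psi_n$, using the symmetry between these coordinates together with a beta-type identity, produces the factor $(\psi_1 + \norm{\theta_0}_2^2/(\pi w_n))^{-(n-3)/2}$; pairing it with $\psi_1^{(n-3)/2} e^{-\psi_1}$ from the prior on $\psi_1$ yields \eqref{eq:lg_ub_stmt}.

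For the lower bound \eqref{eq:lg_lb_stmt} I invoke the reverse direction of Anderson's inequality at radius $r_n/2$ (producing the constraint $\{\sum x_j \le v_n\}$ with $v_n = r_n^2/4$), use $f(\tau) \ge 1/M$ on $(0,1)$, and restrict the $\tau$-integral to that interval. The hypothesis $v_n = O(\sqrt n)$ combined with a localization $\psi_1 \ge c_1 \norm{\theta_0}_2^2$ with $c_1 \ge 2$ keeps the relevant exponent $B \lesssim n$, so that Lemma \ref{lem:incomplete_gamma} applies and returns (up to a $\xi_n \to 1$ factor) essentially the same closed form as in the upper bound. The rest of the computation mirrors the upper bound; the extra $e^{-d_2 \sqrt n}$ prefactor comes from restricting $\psi_2,\ldots,\psi_n$ to a set of the form $\{\sum_{j \ge 2} \psi_j \lesssim \sqrt n\}$, which is precisely what keeps us inside $\sum x_j \le v_n$ after the substitution $x_j = \psi_j y_j$, and whose exponential-product measure is of order $e^{-O(\sqrt n)}$.

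The main obstacle is the coupling between the simplex constraint $\{\sum x_j \le w_n\}$ and the coordinate-dependent weights $1/\psi_j$ appearing inside $B$: neither the Dirichlet integral nor the integration against the product of exponential densities factorizes if one attempts the integrations in the wrong order. Resolving it requires performing the substitutions in a specific sequence ($\tau$ first, then $x_j \to y_j$, then the symmetric coordinates $\psi_2,\ldots,\psi_n$, and only finally $\psi_1$), exploiting the asymmetry between coordinate $1$, which carries the Anderson shift, and the remaining coordinates, which can be symmetrically absorbed. Getting the constants $c_1, d_2$ right in the lower bound is also delicate, since the localization of $\psi_1$ that controls $\theta_{01}^2/(2\psi_1\tau)$ must be balanced against the $e^{-\psi_1}$ tail from the prior and the hypotheses of Lemma \ref{lem:incomplete_gamma}.
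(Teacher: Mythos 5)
Your overall architecture (Anderson bound, $f(\tau)\le M$ or $f(\tau)\ge 1/M$ on $(0,1)$, integrate $\tau$ first, then the inactive scales, ending with a single $\psi_1$-integral) matches the paper's, but two of your key steps do not work as described.

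First, the substitution $x_j = \psi_j y_j$ does not make Lemma \ref{lem:dir_formula} applicable. It does turn the denominator $\norm{\theta_0}_2^2/\psi_1 + w_n\sum_j x_j/\psi_j$ into a function of $\sum_j y_j$, but it simultaneously turns the domain $\{\sum_j x_j \le w_n\}$ into the \emph{weighted} simplex $\{\sum_j \psi_j y_j \le w_n\}$, which is exactly the hypothesis Lemma \ref{lem:dir_formula} cannot accommodate; you have merely moved the coordinate-dependent coupling from the integrand into the domain. You cannot repair this by enlarging or shrinking the domain uniformly in $\psi$, since the $\psi_j$ range over all of $(0,\infty)$ under the exponential prior. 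The paper's proof confronts this head-on: it keeps the standard simplex and proves a new identity (Lemma \ref{lem:dickey_integral}, via the change of variables $z_j = q_j' x_j/T$ and Dickey's formula for $\bbE[\sum_j \nu_j Z_j]^{-a}$ with $Z\sim\mathrm{Dir}$) that reduces $\int_{\sum x_j\le 1}\prod_j x_j^{-1/2}\,[\sum_j q_j x_j + q_0]^{-(n/2-1)}dx$ to a one-dimensional integral involving $\prod_j(q_j x+q_0)^{-1/2}$. This is the central technical contribution of the proof and is absent from your proposal; relatedly, the factor $\norm{\theta_0}_2^2/(\pi w_n)$ in the statement arises from a sharp two-sided bound on $\sqrt{\pi}e^{z}\mathrm{erfc}(\sqrt{z})$ applied to the $\psi_j$-integrals ($j\ge 2$), not from a "beta-type identity."

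Second, your account of the $e^{-d_2\sqrt{n}}$ prefactor in the lower bound is quantitatively wrong. Restricting to $\{\sum_{j\ge 2}\psi_j \lesssim \sqrt{n}\}$ costs probability of order $(c/\sqrt{n})^{\,n-1}$ under the product exponential measure (a Gamma$(n-1,1)$ variable with mean $n-1$ falling below $\sqrt{n}$), i.e.\ roughly $e^{-cn\log n}$, which would make the lower bound useless for the downstream Theorem \ref{thm:lb_lg}. The correct localization is coordinatewise from below, $\psi_j \ge c_3/\sqrt{n}$ for $j\ge 2$ (needed so that Lemma \ref{lem:incomplete_gamma} applies to the truncated $\tau$-integral), and the $e^{-O(\sqrt{n})}$ loss accumulates as $e^{-c_3(n-1)/\sqrt{n}}$ from the per-coordinate erfc lower bound, together with a window $c_1\norm{\theta_0}_2^2 \le \psi_1 \le c_2\norm{\theta_0}_2^2$ for the active scale.
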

\subsection{\bf{Proof of Theorem \ref{thm:conc_lg}}}
Without loss of generality, we assume $g$ to be the $\mbox{Exp}(1)$ distribution since the rate parameter $\lambda$ can be absorbed into the global parameter $\tau$ with the resulting distribution still in $\mathcal{F}$. Also, assume the only non-zero entry in $\theta_0$ is $\theta_{01}$, so that $\norm{\theta_0}_2^2 = \abs{\theta_{01}}^2$. The steps of the proof follow the same structure as in Theorem \ref{thm:conc_g}, i.e., using Anderson's inequality to bound the non-centered concentration given $\psi, \tau$ by the centered concentration as in \eqref{eq:main_bd1} and exploiting the properties of $\mathcal{F}$ to ensure that the bounds are tight. A substantial additional complication arises in integrating out $\psi$ in this case, requiring involved analysis. 

We start with the upper bound \eqref{eq:lg_ub_stmt}. Combining \eqref{eq:main_bd1} \& \eqref{eq:main_bd2}, and bounding $f(\tau) \leq M$ yields:
\be
& \bbP( \norm{\theta - \theta_0}_2 \leq t_n) \\
& \leq  D^n \int_{\tau = 0}^{\infty} f(\tau) e^{- 1/(2 \tau) \sum_{j=1}^n \theta_{0j}^2/ \psi_j } \int_{\psi} g(\psi) \bigg[  \int_{\sum x_j \leq w_n} \prod_{j=1}^n \frac{1}{\sqrt{x_j \tau \psi_j }} e^{-x_j/(2 \tau \psi_j )} dx\bigg] d \psi d\tau  \\
& \leq M D^n \int_{\sum x_j \leq w_n} \prod_{j=1}^n \frac{1}{ \sqrt{x_j} } \int_{\psi} \prod_{j=1}^n \frac{g(\psi_j)}{ \sqrt{\psi_j} } \bigg [ 
\int_{\tau = 0}^{\infty} \tau^{-n/2} e^{- \frac{1}{2 \tau}  \big[\theta_{01}^2/\psi_1  + \sum x_j/\psi_j \big] } d\tau \bigg ] d\psi dx \\
& = M D^n 2^{n/2-1} \Gamma(n/2-1) w_n^{n/2}  \int_{\psi} \prod_{j=1}^n \frac{g(\psi_j)}{ \sqrt{\psi_j} } 
\bigg [\int_{\sum x_j \leq 1 } \frac{\prod_{j=1}^n x_j^{-1/2} }{  [ \norm{\theta_0}_2^2/\psi_1  + w_n \sum x_j/\psi_j ]^{n/2-1} }   \; dx \bigg ] d \psi. \\ \label{eq:lg_ub1}
\ee
Comare \eqref{eq:lg_ub1} with \eqref{eq:glob_ub_nc2}. The crucial difference in this case is that the inner integral over the simplex $\sum_{j=1}^n x_j \leq 1$ is no longer a function of $\sum_{j=1}^n x_j$, rendering Lemma \ref{lem:dir_formula} inapplicable. An important technical contribution of this paper in Lemma \ref{lem:dickey_integral} below is that complicated multiple integrals over the simplex as above can be reduced to a single integral over $(0, 1)$:
\begin{lemma}\label{lem:dickey_integral}
Let $\alpha_j  = 1/2$ for $j = 1, \ldots, n$ and $q_j, j = 0, 1, \ldots, n$ be positive numbers. Then, 
\begin{align*}
\int_{\sum x_j \leq 1 } \frac{\prod_{j=1}^n x_j^{\alpha_j - 1}}{  [ \sum_{j=1}^n q_j x_j + q_0 ]^{n/2-1} } dx = \frac{\Gamma(1/2)^n}{\Gamma(n/2)} q_0(n/2-1) \int_{x=0}^1 \frac{x^{n/2-2} (1-x)}{\prod_{j=1}^n (q_j x + q_0)^{\alpha_j}} dx .
\end{align*}
\end{lemma}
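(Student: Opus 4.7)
The plan is to apply the Feynman parameter identity to symmetrize both sides of the claimed equality, use a single integration by parts to simplify the $x$-integral that results on the right, and finally match the simplified RHS to a radial rewriting of the LHS.

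First, I will apply the Feynman parameter identity
\begin{align*}
\prod_{j=1}^n A_j^{-1/2} = \frac{\Gamma(n/2)}{\Gamma(1/2)^n} \int_{\Delta^{n-1}} \frac{\prod_{j=1}^n t_j^{-1/2}}{\bigl[\sum_{j=1}^n t_j A_j\bigr]^{n/2}} \, dt
\end{align*}
to the factor $\prod_{j=1}^n (q_j x + q_0)^{-1/2}$ appearing in the RHS, with $A_j = q_j x + q_0$. Because $\sum_j t_j = 1$, the denominator collapses to $(xQ_t + q_0)^{n/2}$ with $Q_t := \sum_j t_j q_j$. Interchanging the order of integration by Fubini and cancelling $\Gamma(n/2)/\Gamma(1/2)^n$ against the explicit prefactor, the RHS becomes
\begin{align*}
q_0(n/2-1) \int_{\Delta^{n-1}} \prod_{j=1}^n t_j^{-1/2} \int_0^1 \frac{x^{n/2-2}(1-x)}{(xQ_t+q_0)^{n/2}} \, dx \, dt .
\end{align*}

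Next I will evaluate the inner $x$-integral. The key observation is that
\begin{align*}
v(x) \;:=\; \frac{x^{n/2-1}}{(n/2-1)\, q_0\, (xQ_t+q_0)^{n/2-1}}
\end{align*}
is an antiderivative of $x^{n/2-2}/(xQ_t+q_0)^{n/2}$, as one verifies by direct differentiation using the cancellation $(xQ_t+q_0) - xQ_t = q_0$. Writing the integrand as $(1-x)\, v'(x)$ and integrating by parts, the boundary contributions vanish at both endpoints ($v(0)=0$ from the factor $x^{n/2-1}$ and $(1-x)|_{x=1} = 0$), leaving
\begin{align*}
\int_0^1 \frac{x^{n/2-2}(1-x)}{(xQ_t+q_0)^{n/2}} \, dx = \frac{1}{(n/2-1)\, q_0} \int_0^1 \frac{x^{n/2-1}}{(xQ_t+q_0)^{n/2-1}} \, dx .
\end{align*}
Substituting back cancels the prefactor $q_0(n/2-1)$ exactly, reducing the RHS to $\int_0^1 x^{n/2-1} \int_{\Delta^{n-1}} \prod_j t_j^{-1/2}/(xQ_t+q_0)^{n/2-1} \, dt \, dx$.

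For the final step I will transform the LHS by the radial substitution $x_j = s\, u_j$ with $s = \sum_j x_j \in (0,1)$ and $u = (u_1,\ldots,u_n) \in \Delta^{n-1}$, whose Jacobian is $s^{n-1}$. Using $\prod_j x_j^{-1/2} = s^{-n/2} \prod_j u_j^{-1/2}$ and $\sum_j q_j x_j + q_0 = sQ_u + q_0$, the LHS takes the form
\begin{align*}
\int_0^1 s^{n/2-1} \int_{\Delta^{n-1}} \frac{\prod_{j=1}^n u_j^{-1/2}}{(sQ_u + q_0)^{n/2-1}} \, du \, ds ,
\end{align*}
which is identical to the simplified RHS after relabeling $s \leftrightarrow x$ and $u \leftrightarrow t$. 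The only real obstacle in this plan is spotting the correct antiderivative $v(x)$ so that the prefactor $q_0(n/2-1)$ is absorbed cleanly by the integration by parts; everything else is bookkeeping of Gamma-function constants and a standard change of variables on the simplex.
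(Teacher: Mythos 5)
Your proof is correct, and it takes a genuinely different route from the paper's. The paper proves the identity by a nonlinear change of variables $z_j = (q_j+q_0)x_j/T$ on the simplex (with a rank-one-perturbation determinant computation for the Jacobian), which converts the left side into a Dirichlet expectation $\bbE[\sum_j \nu_j Z_j + Z_{n+1}]^{-2}$, and then invokes Dickey's formula to collapse that expectation to a one-dimensional Beta integral. You instead symmetrize both sides toward a common intermediate object: the Feynman/Dirichlet parametrization turns the product $\prod_j (q_j x+q_0)^{-1/2}$ on the right into a simplex integral, the radial decomposition $x = su$ turns the left side into the same kind of iterated integral, and the two meet after a single integration by parts with the explicitly exhibited antiderivative $v(x)$ (which I verified: the cancellation $(xQ_t+q_0)-xQ_t=q_0$ does make $v'$ equal the integrand, and both boundary terms vanish for $n>2$). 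Your argument is more elementary and self-contained — it avoids Dickey's formula and the Jacobian computation entirely — and, like the paper's method, it extends to general exponents $\alpha_j$ since the Feynman identity and the radial scaling both work for arbitrary positive $\alpha_j$. The only caveats are the implicit standing assumptions $n>2$ (needed both for $v(0)=0$ and for the statement itself to be nondegenerate) and the use of Tonelli for the interchange of integrals, which is justified since all integrands are nonnegative; neither affects the applications in the paper, where $n$ is large.
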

A proof of Lemma \ref{lem:dickey_integral} can be found in the Appendix. We didn't find any previous instance of Lemma \ref{lem:dickey_integral} though a related integral with $n/2$ in the exponent in the denominator appears in \cite{gradshteyn1980corrected}. Our technique for the proof, which utilizes a beautiful identity found in \cite{dickey1968three} can be easily generalized to any $\alpha_j$ and other exponents in the denominator.

Aplying Lemma \ref{lem:dickey_integral} with $q_0 = \norm{\theta_0}_2^2/\psi_1$ and $q_j = w_n/\psi_j$ to evaluate the inner integral over $x$, \eqref{eq:lg_ub1} equals
\begin{align}
(M \norm{\theta_0}_2^2/2) w_n^{n/2} \int_{\psi} \bigg[ \prod_{j=1}^n \frac{g(\psi_j)}{ \sqrt{\psi_j} } \bigg] \frac{1}{ \psi_1} \int_{x=0}^1 \frac{x^{n/2-2} (1-x)}{\prod_{j=1}^n \sqrt{ (w_n x/\psi_j + q_0)} } dx d\psi, \label{eq:lg_ub3}
\end{align}
noting that $(n/2-1) D^n 2^{n/2-1} \Gamma(n/2-1) \, \Gamma(1/2)^n/\Gamma(n/2) = 1/2$. 

So, at this point, we are down from the initial $(2n+1)$ integrals to $(n+1)$ integrals. Next, using $g(\psi_j) = e^{-\psi_j} 1(\psi_j > 0)$ to integrate out $\psi_j, j = 2, \ldots, n$, \eqref{eq:lg_ub3} equals
\begin{align}
(M  \norm{\theta_0}_2^2/2) w_n^{n/2} \int_{\psi_1 = 0}^{\infty} \frac{e^{-\psi_1}}{\psi_1 \sqrt{\psi_1}}  \int_{x=0}^1 \frac{x^{n/2-2} (1-x)}{\sqrt{w_n x/\psi_1 + q_0}} \bigg\{ \int_{\psi = 0}^{\infty} \frac{e^{-\psi}}{ \sqrt{w_n x + \psi q_0} } d \psi \bigg\}^{n-1}dx \, d\psi_1. \label{eq:lg_ub3a}
\end{align}
Using a standard identity and an upper bound for the complementary error function $\mbox{erfc}(z) =  2/\sqrt{\pi} \int_{t=z}^{\infty} e^{-t^2} dt$ (see \ref{eq:erfc_ub} in the Appendix),
\begin{align*}
\int_{\psi = 0}^{\infty} \frac{e^{-\psi}}{ \sqrt{w_n x + \psi q_0} } d \psi = \frac{\sqrt{\pi}}{\sqrt{q_0}} \exp(w_n x/q_0) \mbox{erfc}(\sqrt{w_n x/q_0}) \leq \frac{1}{\sqrt{w_n x + q_0/\pi} } . 
\end{align*} 
Hence, the expression in \eqref{eq:lg_ub3a} can be bounded above by
\begin{align}
& (M/2) \norm{\theta_0}_2^2 w_n^{n/2} \int_{\psi_1 = 0}^{\infty} \frac{e^{-\psi_1}}{ \psi_1} \int_{x=0}^{1} \frac{ x^{n/2-2} (1-x) }{ \sqrt{ \big( w_n x +  \norm{\theta_0}_2^2  \big)} \, \, \big[w_n x +  \norm{\theta_0}_2^2 /(\pi \psi_1) \big] ^{(n-1)/2} } dx \, d\psi_1 \notag \\
& = (M/2) \norm{\theta_0}_2^2 w_n^{n/2}  \int_{\psi_1 = 0}^{\infty} e^{-\psi_1} \psi_1^{(n-3)/2} \int_{x=0}^{1} \frac{ x^{n/2 - 2} (1-x) }{ \sqrt{ \big( w_n x + \norm{\theta_0}_2^2 \big)} \, \, \big[w_n x \psi_1 + \norm{\theta_0}_2^2/\pi \big]^{(n-1)/2} } dx \, d\psi_1. \label{eq:lg_ub4}
\end{align}
Let us aim to bound the inner integral over $x$ in \eqref{eq:lg_ub4}. We upper bound $(1-x)$ in the numerator by $1$, lower-bound $\sqrt{(w_n x + \norm{\theta_0}_2^2)}$ in the denominator by $\sqrt{\norm{\theta_0}_2^2}$ and multiply a $\sqrt{w_n x \psi_1 + \norm{\theta_0}_2^2/\pi}$ term in the numerator and denominator to get 
\begin{align*}
& \int_{x=0}^{1} \frac{ x^{n/2 - 2} (1-x) }{ \sqrt{w_n x + \norm{\theta_0}_2^2} \, \, \big[w_n x \psi_1 + \norm{\theta_0}_2^2/\pi \big]^{(n-1)/2} } dx \\
& \leq \frac{ \sqrt{w_n \psi_1 + \norm{\theta_0}_2^2/\pi} }{\sqrt{\norm{\theta_0}_2^2} } \int_{x=0}^{1} \frac{ x^{n/2-2} }{ \big(w_n x \psi_1 + \norm{\theta_0}_2^2/\pi \big)^{n/2} } dx.
\end{align*}
We use the fact that $\int_{x=0}^{1} x^{n/2-2}/(\alpha x + \beta)^{n/2} dx = 2(\alpha + \beta)^{1 - n/2}/\{ \beta (n-2)\}$ to conclude that the last line in the above display equals
\begin{align*}
& \frac{ \sqrt{w_n \psi_1 + \norm{\theta_0}_2^2/\pi} }{\sqrt{\norm{\theta_0}_2^2} }  \, \frac{2 \pi }{ \norm{\theta_0}_2^2 } \, \frac{ \big(w_n \psi_1 + \norm{\theta_0}_2^2/\pi \big)^{1 - n/2} }{ (n-2) } \\
& = \frac{ 1 }{\sqrt{\norm{\theta_0}_2^2} } \, \frac{\pi}{\norm{\theta_0}_2^2  (n/2-1)} \big(w_n \psi_1 + \norm{\theta_0}_2^2/\pi \big)^{- (n-3)/2}. 
\end{align*}
Substituitng this in \eqref{eq:lg_ub4}, we finally obtain:
\begin{align}
\bbP(\norm{ \theta - \theta_0}_2 \leq t_n) \leq \frac{C_1 w_n }{(n/2-1)}  \, \sqrt{\frac{w_n}{ \norm{\theta_0}_2^2}} \, \int_{\psi_1 = 0}^{\infty} \frac{\psi_1^{(n-3)/2}}{ \big\{ \psi_1 + \norm{\theta_0}_2^2/(\pi w_n) \big \}^{(n-3)/2}  } e^{-\psi_1} d \psi_1, \label{eq:lg_ub_final}
\end{align}
where $C_1 > 0$ is a global constant (depending only on $M$).  \eqref{eq:lg_ub_stmt} clearly follows from \eqref{eq:lg_ub_final}. \qed

{\bf Lower bound:} We proceed to obtain a lower bound to $\bbP(\norm{ \theta - \theta_0}_2 < r_n)$ similar to \eqref{eq:lg_ub_final} under additional assumptions on $r_n$ as in the statement of Theorem \ref{thm:conc_lg}. To that end, note that in the proof of the upper bound here, we used only two inequalities until \eqref{eq:lg_ub3}: (i) Anderson's inequality in \eqref{eq:main_bd1} and (ii) upper bounding $f(\tau)$ by $M$. As in the proof of the lower bound in Theorem \ref{thm:conc_g}, we obtain a lower bound similar to the expression in \eqref{eq:lg_ub3} by (i) using Anderson's inequality \eqref{eq:main_bd1} in the reverse direction, and (ii) using $f(\tau) \geq K$ on $(0, 1)$:
\be
& \bbP( \norm{\theta - \theta_0}_2 \leq r_n) \\
& \geq K D^n \int_{\sum x_j \leq v_n} \prod_{j=1}^n \frac{1}{ \sqrt{x_j} } \int_{\psi} \prod_{j=1}^n \frac{g(\psi_j)}{ \sqrt{\psi_j} } \bigg [ 
\int_{\tau = 0}^{1} \tau^{-n/2} e^{- \frac{1}{2 \tau}  \big[\norm{\theta_0}_2^2/\psi_1 + \sum_{j=1}^n x_j/\psi_j)  \big] } d\tau \bigg ] d\psi \, dx.  \label{eq:lg_lb1}
\ee
However, unlike Theorem \ref{thm:conc_g}, we cannot directly resort to Lemma \ref{lem:incomplete_gamma} since $a_n = \norm{\theta_0}_2^2/\psi_1 + \sum_{j=1}^n x_j/\psi_j$ can be arbitrarily large if $\psi_j$'s are close enough to zero. This necessitates a more careful analysis in bounding below the expression in \eqref{eq:lg_lb1} by constraining the $\psi_j$'s to an appropriate region $\Gamma$ away from zero:
\begin{align*}
\Gamma = \bigg\{ c_1 \norm{\theta_0}_2^2 \leq \psi_1 \leq c_2\norm{\theta_0}_2^2, \, \psi_j \geq c_3/\sqrt{n}, \, j = 2, \ldots, n \bigg\}. 
\end{align*}
In the above display, $c_1 < c_2$ and $c_3 > 1$ are positive constants to be chosen later, that satisfy
\begin{align}\label{eq:lg_lb_cond1}
1/c_1 + \max\{1/(c_1 \norm{\theta_0}_2^2), \sqrt{n}/c_3\} v_n \leq n/(2e) . 
\end{align} 
With \eqref{eq:lg_lb_cond1}, we can invoke Lemma \ref{lem:incomplete_gamma} to bound below the integral over $\tau$ in \eqref{eq:lg_lb1}, since for $\psi \in \Gamma$, $\norm{\theta_0}_2^2/\psi_1 + \sum_{j=1}^n x_j/\psi_j \leq 1/c_1 + \max\{1/(c_1 \norm{\theta_0}_2^2), \sqrt{n}/c_3 \} \sum_{j=1}^n x_j \leq 1/c_1 + \max\{1/(c_1 \norm{\theta_0}_2^2), \sqrt{n}/c_3\} v_n \leq n/(2e)$ by \eqref{eq:lg_lb_cond1}. The resulting lower bound is exactly same as \eqref{eq:lg_ub1} with $M$ replaced by $K \xi_n$ and $w_n$ by $v_n$, where $\xi_n \uparrow 1$ is as in Lemma \ref{lem:incomplete_gamma}. As in the upper bound calculations \eqref{eq:lg_ub1} - \eqref{eq:lg_ub3}, we invoke Lemma \ref{lem:dickey_integral} with $q_0 = \norm{\theta_0}_2^2/\psi_1$ and $q_j = v_n/\psi_j$ to reduce the multiple integral over the simplex and bound the expression in \eqref{eq:lg_lb1} below by
\begin{align}
& (K \norm{\theta_0}_2^2/2) \xi_n v_n^{n/2} \int_{\psi \in \Gamma} \bigg[ \prod_{j=1}^n \frac{g(\psi_j)}{ \sqrt{\psi_j} } \bigg] \frac{1}{ \psi_1} \int_{x=1/2}^{3/4} \frac{x^{n/2-2} (1-x)}{\prod_{j=1}^n \sqrt{ (v_n x/\psi_j + q_0)} } dx d\psi  = \notag \\
& (K \norm{\theta_0}_2^2/2) \xi_n v_n^{n/2}  \int_{\psi_1 = c_1 \norm{\theta_0}^2}^{c_2 \norm{\theta_0}^2} \frac{e^{-\psi_1}}{\psi_1}  \int_{x=1/2}^{3/4}\frac{x^{n/2-2} (1-x)}{\sqrt{v_n x + q_0 \psi_1}} \bigg\{ \int_{\psi = c_3/\sqrt{n}}^{\infty} \frac{e^{-\psi}}{ \sqrt{v_n x + \psi q_0} } d \psi \bigg\}^{n-1}dx \, d\psi_1. \label{eq:lg_lb2}
\end{align}
Note the inner integral over $x$ is restricted to $(1/2, 3/4)$. Now, 
\begin{align}
\int_{\psi = c_3/\sqrt{n}}^{\infty} \frac{e^{-\psi}}{ \sqrt{v_n x + \psi q_0} } d \psi = \frac{ \sqrt{\pi} }{ \sqrt{q_0} } e^{v_n x/q_0} \mbox{erfc}\bigg ( \sqrt{v_nx/q_0 + c_3/\sqrt{n} } \bigg). \label{eq:lg_lb3}
\end{align}
We use a lower bound on the $\mbox{erfc}$ function (see \ref{eq:erfc_lb} in the Appendix for a proof) which states that for $z \geq 2$, $\sqrt{\pi} e^{z} \mbox{erfc}(\sqrt{z}) \geq \big(1/\sqrt{z}\big)^{1+\delta}$ for any $\delta > 0$. Since we have restricted $x \geq 1/2$ in \eqref{eq:lg_lb2} and $v_n \psi_1/\norm{\theta_0}_2^2 \geq c_1v_n $ on $\Gamma$, we have $\sqrt{v_nx/q_0 + c_3/\sqrt{n} }\geq \sqrt{c_1}$ provided $v_n \geq 1$. Thus, choosing $c_1 >  2$, we can apply the above lower bound on the error function to bound the expression in the r.h.s. of \eqref{eq:lg_lb3} as:
\begin{align*}
& \frac{ \sqrt{\pi} }{ \sqrt{q_0} } e^{v_n x/q_0} \mbox{erfc}\bigg ( \sqrt{v_nx/q_0 + c_3/\sqrt{n} } \bigg)  
\geq \frac{1}{\sqrt{q_0}} e^{-c_3/\sqrt{n}}  \bigg [\frac{1}{\sqrt{v_n x/q_0 + c_3/\sqrt{n}}} \bigg]^{1+ \delta} \\
& \geq \frac{1}{\sqrt{q_0}} e^{-c_3/\sqrt{n}}  \frac{1}{\sqrt{v_n x/q_0 + 3/(4\pi)}}   \frac{1}{(1+c_2)^{\delta}} = \frac{e^{-c_3/\sqrt{n}}}{(1 + c_2)^{\delta}} \frac{1}{\sqrt{v_n x + 3q_0/(4\pi)}}. 
\end{align*}
In the second to third step, we used that $v_n x/q_0 + c_3/\sqrt{n} \leq v_n x/q_0 + 3/(4\pi)$ for $n$ larger than some constant. We choose $\delta = 1/(n-1)$ and substitute the above lower bound for the l.h.s. of \eqref{eq:lg_lb3} into \eqref{eq:lg_lb2}. This allows us to bound \eqref{eq:lg_lb2} below by
\begin{align}
& C_1 \xi_n \norm{\theta_0}_2^2 e^{-c_3 (n-1)/\sqrt{n}} v_n^{n/2} \times   \notag \\
& \int_{\psi_1 = c_1 \norm{\theta_0}^2}^{c_2 \norm{\theta_0}^2} e^{-\psi_1} \psi_1^{(n-3)/2} \int_{x=1/2}^{3/4} \frac{ x^{n/2 - 2} (1-x) }{ \sqrt{ \big( v_n x + \norm{\theta_0}_2^2 \big)} \, \, \big[v_n \psi_1 x + 3 \norm{\theta_0}_2^2/(4\pi) \big]^{(n-1)/2} } dx \, d\psi_1 .\label{eq:lg_lb4}
\end{align} 
Let us tackle the integral over $x$ in \eqref{eq:lg_lb4}. To that end, we first lower-bound $(1 - x)$ in the numerator by $1/4$, upper-bound $\sqrt{v_n x + \norm{\theta_0}_2^2}$ in the denominator by $\sqrt{v_n + \norm{\theta_0}_2^2}$. Next, we use the formula
\begin{align*}
\int_{x= 1/2}^{3/4} \frac{x^{n/2- 2}}{ (\alpha x + \beta)^{n/2} } dx = \frac{2(\alpha + 4\beta/3)^{1 - n/2}}{\beta(n-2)} \bigg[ 1 - \bigg\{ \frac{\alpha + 4\beta/3}{\alpha + 2\beta}\bigg\}^{n/2-1} \bigg],
\end{align*}
with $\alpha = v_n \psi_1$ and $\beta = 3 \norm{\theta_0}_2^2/(4 \pi)$. Now, $(\alpha + 4\beta/3)/(\alpha + 2 \beta) = 1 - 2\beta/\{ 3(\alpha + 2 \beta) \}$ is bounded away from $0$ and $1$ since $c_1 \norm{\theta_0}_2^2 \leq \alpha \leq c_2 \norm{\theta_0}_2^2$. Thus, 
\begin{align*}
\bigg[ 1 - \bigg\{ \frac{\alpha + 4\beta/3}{\alpha + 2\beta}\bigg\}^{n/2-1} \bigg] \geq 1/2
\end{align*}
for $n$ large. Substituting all these in \eqref{eq:lg_lb4}, we finally obtain:
\begin{align}
\bbP(\norm{ \theta - \theta_0}_2 \leq r_n ) \geq \frac{C_2 \xi_n v_n \exp(-c_3 \sqrt{n})}{(n/2-1)}   \, \sqrt{\frac{v_n}{ v_n + \norm{\theta_0}_2^2}} \, \int_{\psi_1 = c_1 \norm{\theta_0}^2}^{c_2 \norm{\theta_0}^2} \frac{\psi_1^{(n-3)/2}}{ \big\{ \psi_1 + \norm{\theta_0}_2^2/(\pi v_n) \big \}^{(n-3)/2}  } e^{-\psi_1} d \psi_1, \label{eq:lg_lb_final}
\end{align}
where $C_2 > 0$ is a global constant depending only on $K$ in the definition of $\mathcal{F}$ and $\xi_n \uparrow 1$ with $1 - \xi_n \leq D/\sqrt{n}$ for some constant $D > 0$. We only required $c_1 > 2$ so far. Since $\norm{\theta_0}_2 \geq 1/\sqrt{n}$, choosing $c_1$ and $c_3$ to be sufficiently large constants, \eqref{eq:lg_lb_cond1} can always be satisfied. The proof of \eqref{eq:lg_lb_stmt} clearly follows from \eqref{eq:lg_lb_final}, since $\xi_n v_n/(n/2-1) \sqrt{\frac{v_n}{ v_n + \norm{\theta_0}_2^2}}$ can be bounded below by $e^{- c_4 \sqrt{n} }$. 
\qed

\subsection{\bf Proof of Theorem \ref{thm:lb_lg}}
Let $m_n = (n-3)/2$. We set $t_n = s_n$, where $s_n$ is the minimax rate corresponding to $q_n = 1$, so that $w_n = s_n^2 = \log n$. Also, let $\norm{\theta_0}_2^2 = \pi w_n u_n^2$, where $u_n$ is a slowly increasing sequence; we set $u_n = \log (\log n)$ for future references. Finally let $v_n = r_n^2/4 = \sqrt{m_n}$. With these choices, we proceed to show that \eqref{eq:lb_ratio} holds. 

We first simplify  \eqref{eq:lg_ub_final} further. The function $x \to x/x(x+a)$ monotonically increases from $0$ to $1$ for any $a > 0$. Thus, for any $T_n > 0$, 
\begin{align}
&\int_{\psi_1 = 0}^{\infty} \frac{\psi_1^{m_n}}{ \big\{ \psi_1 + \norm{\theta_0}_2^2/(\pi w_n) \big \}^{m_n}  } e^{-\psi_1} d \psi_1 \notag \\
& \leq \int_{\psi_1 = 0}^{T_n} \frac{\psi_1^{m_n}}{ \big\{ \psi_1 + \norm{\theta_0}_2^2/(\pi w_n) \big \}^{m_n}  } e^{-\psi_1} d \psi_1 + \int_{\psi_1 = T_n}^{\infty} e^{-\psi_1} d \psi_1  \leq \bigg( \frac{T_n}{T_n + u_n^2} \bigg)^{m_n} + e^{-T_n}. \label{eq:comb1}
\end{align}
We choose an appropriate $T_n$ which gives us the necessary bound, namely $T_n = u_n \sqrt{m_n} $. Then, using the fact that $(1 - x)^{1/x} \leq e^{-1}$ for all $x \in (0, 1)$, we have
\begin{align*}
\bigg( \frac{T_n}{T_n + u_n^2} \bigg)^{m_n}  = \bigg( \frac{\sqrt{m_n}}{\sqrt{m_n} + u_n} \bigg)^{m_n}  = \bigg( 1 - \frac{u_n}{\sqrt{m_n} + u_n} \bigg)^{m_n} \leq e^{- m_n u_n/(\sqrt{m_n} + u_n)} \leq e^{- u_n \sqrt{m_n} /2},
\end{align*}
where for the last part used that $e^{-1/x}$ is an increasing function and $\sqrt{m_n} + u_n \leq 2 \sqrt{m_n}$. Thus, substituting $T_n$ in \eqref{eq:comb1} yields, for a global constant $C_1 > 0$, 
\begin{align}\label{eq:lg_simp_ub}
\bbP(\norm{\theta - \theta_0}_2 \leq s_n) \leq \frac{C_1 w_n }{(n/2-1)}  \, \sqrt{\frac{w_n}{ \norm{\theta_0}_2^2}} \,  \, e^{-u_n \sqrt{m_n}/2} .
\end{align}
Next, again using the fact that $x \to x/x(x+a)$ is monotonically increasing, and choosing $c_2 = \infty$, we simplify the lower bound \eqref{eq:lg_lb_final}. Observe
\begin{align*}
& \int_{\psi_1 = c_1 \norm{\theta_0}^2}^{\infty} \frac{\psi_1^{m_n}}{ \big\{ \psi_1 + \norm{\theta_0}_2^2/(\pi v_n) \big \}^{m_n}  } e^{-\psi_1} d \psi_1 \\
& \geq \bigg(\frac{v_n}{v_n + C}\bigg)^{m_n} e^{- c_1 \norm{\theta_0}_2^2 },
\end{align*}
for some constant $C > 0$. Finally, using $(1 - x)^{1/x} \geq e^{-2}$ for all $x \in (0, 1/2)$ and $e^{-1/x}$ is an increasing function in $x > 0$, we have, 
\begin{align*}
\bigg(\frac{v_n }{v_n + C}\bigg)^{m_n} \geq e^{- \sqrt{m_n}/2}. 
\end{align*}
Hence, the integral is bounded below by $e^{-(\sqrt{m_n} + c_1 \norm{\theta_0}_2^2)/2}$, resulting in
\begin{align}\label{eq:lg_simp_lb}
\bbP(\norm{\theta - \theta_0}_2 \leq r_n) \geq \frac{C_2 \xi_n v_n }{(n/2-1)}  \, \sqrt{\frac{v_n}{v_n +  \norm{\theta_0}_2^2}} \,  \, e^{-(\sqrt{m_n} + c_1 \norm{\theta_0}_2^2)/2}.
\end{align}

Thus, finally, noting that $u_n \to \infty$, 
\begin{align*}
\frac{\bbP(|| \theta - \theta_0 ||_2 < s_n )}{\bbP(|| \theta - \theta_0 ||_2 < r_n )}  \times e^{r_n^2}  \leq D \frac{w_n^{3/2}}{v_n} e^{C (\sqrt{m_n} + \sqrt{n} + \norm{\theta_0}_2^2)}  \, e^{- u_n \sqrt{m_n}  / 2} \to 0,
\end{align*} where $C, D > 0$ are constants. \qed

\subsection{\bf Proof of Theorem \ref{thm:lb_plugin} }
As before, we assume $\lambda = 1$ w.l.g., since it can be absorbed in the constant appearing the sequence $\tau_n$ otherwise. 
As in the proof of Theorem \ref{thm:lb_lg}, combine \eqref{eq:main_bd1} \& \eqref{eq:main_bd2} to obtain
\begin{eqnarray*}
\bbP(\norm{\theta - \theta_0} < t_n) &\leq& D^n \tau_n^{-n/2}  \int_{\psi} g(\psi)  \bigg\{ \int_{\sum x_j \leq w_n} \prod_{j=1}^n \frac{1}{\sqrt{x_j \psi_j}} \exp \bigg(-\frac{x_j + \theta_{0j}^2}{2\psi_j}\bigg) dx\bigg\} d\psi \\
&=& D^n \tau_n^{-n/2} w_n^{n/2} \int_{\sum x_j \leq 1}  \prod_{j=1}^n \frac{1}{\sqrt{x_j }} \bigg \{\prod_{j=1}^n \int_{\psi_j = 0}^{\infty} \frac{e^{-\psi_j}}{\sqrt{\psi_j}} \exp \bigg(-\frac{w_n x_j + \theta_{0j}^2}{2\tau_n \psi_j}  \bigg) d\psi_j\bigg\} dx,  \end{eqnarray*}
where $w_n = t_n^2$. Using the fact $\int_{0}^{\infty} \frac{1}{\sqrt{x}} \exp \big\{-\big(\frac{a}{x} + x \big)\big\} dx= \sqrt{\pi} e^{-2 \sqrt{a}}$, we obtain
\begin{align}
& \bbP(\norm{\theta - \theta_0} < t_n) \notag \\
& \leq D^n \pi^{n/2} \tau_n^{-n/2} w_n^{n/2}   \int_{\sum x_j \leq 1} \prod_{j=1}^n \frac{1}{\sqrt{x_j} } \exp \bigg\{-2\sqrt\frac{w_n x_j + \theta_{0j}^2}{2 \tau_n} \bigg\} dx \notag \\
& \leq  \bigg(\frac{D^2 \pi w_n}{\tau_n}\bigg)^{n/2} e^{-\frac{\sqrt{2} \norm{\theta_0}_1}{\sqrt{\tau_n}}} \int_{\sum x_j \leq 1} \prod_{j=1}^n \frac{1}{\sqrt{x_j} } dx = \bigg(\frac{D^2 \pi w_n}{\tau_n}\bigg)^{n/2} e^{-\frac{\sqrt{2} \norm{\theta_0}_1}{\sqrt{\tau_n}}} \frac{\Gamma(1/2)^n}{\Gamma(n/2+1)}, \label{eq:plug_in_ub}
\end{align}
where the second to third inequality uses $x_j \geq 0$ and the last integral follows from Lemma \ref{lem:dir_formula}. 
Along the same lines, 
\begin{align}
\bbP(\norm{\theta - \theta_0} < r_n) &\geq \bigg(\frac{D^2 \pi v_n}{\tau_n}\bigg)^{n/2} \int_{\sum x_j \leq 1} \prod_{j=1}^n \frac{1}{\sqrt{x_j} } \exp \bigg\{-2\sqrt\frac{v_n x_j + \theta_{0j}^2}{2 \tau_n} \bigg\} dx \notag \\
& \geq \bigg(\frac{D^2 \pi v_n}{\tau_n}\bigg)^{n/2} e^{-\frac{\sqrt{2} \norm{\theta_0}_1 + \sqrt{n v_n}}{\sqrt{\tau_n}}} \frac{\Gamma(1/2)^n}{\Gamma(n/2+1)},  \label{eq:plug_in_lb}
\end{align}
where $v_n = r_n^2/4$. From the second to third equation in the above display, we used $\sqrt{a+b} \leq \sqrt{a} + \sqrt{b}$ and $\sum_{j=1}^n  \sqrt{x_j} \leq \sqrt{n}$ by Cauchy-Schwartz inequality if $x \in \Delta^{n-1}$. Thus, from \eqref{eq:plug_in_ub} \& \eqref{eq:plug_in_lb}, the ratio in \eqref{eq:lb_ratio} can be bounded above as:
\begin{align*}
\frac{\bbP(\norm{\theta - \theta_0} < t_n)}{ \bbP(\norm{\theta - \theta_0} < r_n)}  \leq \bigg(\frac{w_n}{v_n}\bigg)^{n/2} e^{\sqrt{2 v_n n/\tau_n}}.
\end{align*}
Choose $t_n = s_n$, $r_n = 2 \sqrt{2} s_n$ so that $v_n = 2 w_n = 2 q_n \log(n/q_n)$ and $(w_n/v_n)^{n/2} = e^{- C n}$. Clearly $v_n n /\tau_n \leq C n q_n (\log n)^2$ and hence, $e^{\sqrt{2 v_n n/\tau_n}} = o(e^{C n})$ by assumption. Thus, the right hand side of the above display $\to 0$, proving the assertion of the Theorem. 
\subsection*{{\bf Proof of Theorem \ref{thm:kD_conc}}}

First, we will state a more general result on the concentration of $\mbox{DL}_{1/n}(\tau)$ when $\tau 
\sim \mbox{Exp}(\lambda)$. The result follows from a straightforward modification of  Lemma 4.1 in  \cite{pati2012posterior} and the detailed proof is omitted here.  Assume $\delta = t_n/(2n)$. 
For fixed numbers $0 < a <b < 1$, let  
\begin{eqnarray*}
\eta_n =  1-  \frac{(n- q_n) \delta}{2q_n \log (n/q_n)} - \frac{(q_n -1)b}{2q_n} , \quad \xi_n=  1- \frac{(q_n -1)a}{4q_n}.
\end{eqnarray*}
Also, without loss of generality assume that $\{1\} \subset S_0 = \mbox{supp}(\theta_0)$, i.e., $\theta_{01} \neq 0$. Let $S_1 = S_0 \backslash \{1\}$. 
If $\theta_0 \in l_0 (q_n; n)$, it follows that 
\begin{eqnarray*}
P( \norm{\theta - \theta_0} < t_n)  \geq  C \, \bbP( \tau \in [2q_n, 4q_n]) \, A_n B_n, 
\end{eqnarray*}
where $C$ is an absolute constant, and
\begin{eqnarray*} 
A_n &=&  \exp\bigg\{ -q_n \log 2 - \sum_{ j \in S_1} \frac{\abs{\theta_{0j}}}{a} - \theta_{01}/ (1-b)\eta_n\bigg\}  \\
B_n &=& \bigg[ 1 - \exp\bigg\{- \frac{t_n}{\sqrt{2q_n} b} \bigg\}  \bigg]^{q_n -1} \bigg[ 1- \exp\bigg\{- \frac{t_n}{\sqrt{2q_n} (1-a/8)\xi_n}\bigg\} \bigg]. \\
\end{eqnarray*}
In our case, $\abs{S_0} = 1$, $\theta_{01} = \sqrt{\log n}$, and
$t_n = n^{\delta/2}$.  Hence $A_n$ is a constant, $B_n = \exp \{- K_1 \sqrt{\log n} \}$ 
for some constant $K_1$ and 
$P( \tau \in [2q_n, 4q_n])$ is also a constant.  Hence,  
under the assumptions of  Theorem \ref{thm:kD_conc}, $
P( \norm{\theta - \theta_0} < t_n)  \geq  \exp \{ -C \sqrt{\log n}   \}$.


%

\appendix 
\section*{Appendix}
%
\subsection*{\bf Proof of Proposition \ref{propWG}}

When $a = 1/n$, $\phi_j \sim \mbox{Beta}(1/n,1-1/n)$ marginally. Hence, the marginal distribution of $\theta_j$ given $\tau$ is proportional to
\begin{align*}
\int_{\phi_j=0}^1 e^{- \abs{\theta_j}/(\phi_j \tau)} \bigg(\frac{\phi_j}{1 - \phi_j} \bigg)^{1/n} \phi_j^{-2} d \phi_j. 
\end{align*}
Substituting $z = \phi_j/(1 - \phi_j)$ so that $\phi_j = z/(1+z)$, the above integral reduces to 
\begin{align*}
e^{-\abs{\theta_j}/\tau} \int_{z=0}^{\infty} e^{- \abs{\theta_j}/(\tau z)} z^{-(2 - 1/n)} dz \propto e^{- \abs{\theta_j}/\tau} \abs{\theta_j}^{1/n-1}.
\end{align*}
In the general case, $\phi_j \sim \mbox{Beta}(a, (n-1)a)$ marginally. Substituting $z = \phi_j/(1 - \phi_j)$ as before, the marginal density of $\theta_j$ is proportional to
\begin{align*}
e^{-\abs{\theta_j}/\tau} \int_{z=0}^{\infty} e^{- \abs{\theta_j}/(\tau z)} z^{-(2 - a)}  \bigg(\frac{1}{1+z}\bigg)^{na-1} dz. 
\end{align*}
The above integral can clearly be bounded below by a constant multiple of 
\begin{align*}
e^{-\abs{\theta_j}/\tau} \int_{z=0}^{1} e^{- \abs{\theta_j}/(\tau z)} z^{-(2 - a)}  dz. 
\end{align*}
Resort to Lemma \ref{lem:incomplete_gamma} to finish the proof.
\subsection*{{\bf Proof of Lemma \ref{lem:incomplete_gamma}}}

Using a simple change of variable, 
\begin{align*}
&\int_{\tau=0}^{1} \tau^{-n/2} e^{-a_n/(2 \tau)} d\tau = \int_{z=1}^{\infty} z^{n/2-2} e^{-a_nz/2} dz  \\
& \bigg( \frac{2}{a_n} \bigg)^{n/2-1}\int_{t=a_n/2}^{\infty} t^{a_n/2-2} e^{-t} dt = \bigg( \frac{2}{a_n} \bigg)^{n/2-1} \bigg[ \Gamma(n/2-1) - \int_{t=0}^{a_n/2} t^{n/2-2} e^{-t} dt \bigg]
\end{align*}
Noting that $\int_{t=0}^{a_n/2} t^{n/2-2} e^{-t} dt \leq a_n^{n/2-1}/(n/2-1)$ and $a_n \leq n/(2e)$ by assumption, the last entry in the above display can be bounded below by
\begin{align*}
\bigg( \frac{2}{a_n} \bigg)^{n/2-1} \Gamma(n/2-1) \bigg[1 - \frac{a_n^{n/2-1}}{\Gamma(n/2)} \bigg] \geq \bigg( \frac{2}{a_n} \bigg)^{n/2-1} \bigg[1 - \frac{\{n/(2e)\}^{n/2-1}}{\Gamma(n/2)} \bigg] .
\end{align*}
Let $\xi_n = 1 - \{n/(2e)\}^{n/2-1}/\Gamma(n/2)$. Using the fact that $\Gamma(m) \geq \sqrt{2 \pi} \, m^{m-1/2} e^{-m}$ for any $m > 0$, one has 
$\Gamma(n/2) \geq C \{n/(2e)\}^{n/2-1} \sqrt{n}$ with $C = e \sqrt{\pi}$. Hence, $(1 - \xi_n) \leq C/\sqrt{n}$ for some absolute constant $C > 0$. 

\subsection*{{\bf Proof of Lemma \ref{lem:dickey_integral} } }


Let $s = n/2$, $T = \sum_{j=1}^n q_j x_j + q_0$ and $q_j' = (q_j + q_0)$.  Then, the multiple integral in Lemma \ref{lem:dickey_integral} equals
\begin{align}
& \int_{\sum_{j=1}^n x_j \leq 1} \frac{1}{T^{s-1}} \prod_{j=1}^n x_j^{\alpha_j -1} dx  = \int_{\sum_{j=1}^n x_j \leq 1} \frac{1}{T^{s-1}}  \prod_{j=1}^n \bigg( \frac{q_j'x_j}{T}\bigg)^{\alpha_j -1} \prod_{j=1}^n 
\bigg( \frac{T}{q_j'}\bigg)^{\alpha_j -1} dx \nonumber \\
& =  \prod_{j=1}^n \bigg( \frac{1}{q_j'}\bigg)^{\alpha_j -1} 
\int_{\sum_{j=1}^n x_j \leq 1} T^{1-n} 
\prod_{j=1}^n \bigg( \frac{q_j'x_j}{T}\bigg)^{\alpha_j -1} dx \label{eq:I}
\end{align}
Now, we make a change of variable from $x$ to $z$, with $z_j = q_j' x_j/T$ for $j=1 \ldots, n$. Clearly, $z$ also belongs to the simplex $\Delta^{(n-1)}$. Moreover, letting $z_{n+1} = 1- \sum_{j=1}^n z_j$, one has $z_{n+1} = q_0 x_{n+1}/T$, where $x_{n+1} = 1- \sum_{j=1}^nx_j$. Thus, by composition rule, 
\begin{align}\label{eq:T}
T = \frac{x_1}{\frac{z_1}{q_1'}} = \cdots =  \frac{x_n}{z_n/ q_n} = 
\frac{x_{n+1}}{ z_n/q_0} = \frac{1}{z_1/q_1' + \cdots + z_n/ q_n' + z_{n+1}/q_0}
\end{align}  
Let $J =  \big( \frac{\partial x_j}{\partial z_l}\big)_{jl}$ be the Jacobian of the transformation and $H =  \big( \frac{\partial x_j}{\partial z_l}\big)_{jl} = J^{-1}$. 
Then, 
\begin{eqnarray*}
H_{jl} = 
\begin{cases}
\frac{q_j'}{T^2} (T - q_j X_j) \,\mbox{if} \, l=j \\
\frac{-q_j'X_j}{T^2} q_l  \,\mbox{if} \, l\neq j
\end{cases}
\end{eqnarray*}
Clearly, $\abs{H} = \abs{H_1}\prod_{j=1}^n \frac{q_j'}{T^2} $ with $H_1 = T \mathrm{I}_n - x q^{\T}$, where $q = (q_1, \ldots, q_n)^{\T}$ and $\abs{A}$ denotes the determinant of a square matrix $A$. Using a standard result for determinants of rank one perturbations, one has 
$\abs{H_1} =  T^n \abs{\mathrm{I}_n - \frac{1}{T} x q^{\T}} = T^n \big( 1 - \frac{q^{\T} x}{T} \big) = q_0 T^{n-1} $, implying $\abs{H} =  (q_0 T^{n-1}) \prod_{j=1}^n \frac{q_j'}{T^2} = \frac{q_0}{T^{n+1}}  \prod_{j=1}^n q_j'$. 
Hence the Jacobian of the transformation is 
\begin{eqnarray*}
\abs{J} = \frac{T^{n+1}} {r \prod_{j=1}^n q_j'}, 
\end{eqnarray*}
so that the change of variable in (\ref{eq:I}) results in
\begin{align}
& \prod_{j=1}^n \bigg(\frac{1}{q_j'}\bigg)^{\alpha_j -1} \frac{1}{q_0 \prod_{j=1}^n q_j'} \int_{\sum_{j=1}^n z_j \leq 1} \bigg\{ \prod_{j=1}^n z_j ^{\alpha_j -1} \bigg\} T^2 dz  \nonumber \\
&= q_0 \prod_{j=1}^n \bigg(\frac{1}{q_j'}\bigg)^{\alpha_j} \int_{\sum_{j=1}^n z_j \leq 1} \bigg\{ \prod_{j=1}^n z_j ^{\alpha_j -1} \bigg\} \frac{T^2}{q_0^2} dz \nonumber  \\
&=q_0 \prod_{j=1}^n \bigg(\frac{1}{q_j'}\bigg)^{\alpha_j} \int_{\sum_{j=1}^n z_j \leq 1} \frac{1}{(\nu_1z_1 + \cdots + \nu_nz_n + z_{n+1})^2} \bigg\{ \prod_{j=1}^n z_j ^{\alpha_j -1} \bigg\} dz \label{eq:I2} 
\end{align}
where $v_j = \frac{q_0}{q_j + q_0} = \frac{q_0}{q_j'}$.  
Now, the expression in (\ref{eq:I2}) clearly equals 
\begin{eqnarray} \label{eq:I3}
q_0 \prod_{j=1}^n \bigg(\frac{1}{q_j'}\bigg)^{\alpha_j} \frac{\prod_{j=1}^n\Gamma(\alpha_j)}{\Gamma(s+1)}  \bbE \bigg\{ \nu_1 Z_1 + \cdots +\nu_n Z_n + Z_{n+1} \bigg\}^{-2}, 
\end{eqnarray}
where $(Z_1, \ldots, Z_n) \sim \mbox{Dir}(\alpha_1, \ldots, \alpha_n, 1)$. A profound result in \cite{dickey1968three} shows that expectations of functions of Dirichlet random vectors as above can be reduced to the expectation of a functional of  univariate Beta random variable: \\
{\bf Dickey's formula \cite{dickey1968three}}: Let $(Z_1, \cdots, Z_n) \sim \mbox{Dir}(\beta_1, \cdots, \beta_n , \beta_{n+1})$ and $Z_{n+1} = 1- \sum_{i=1}^nZ_i$.  Suppose 
$a < \sum_{j=1}^{n+1} \beta_j$. Then, for $\nu_j > 0$, 
\begin{eqnarray*}
\bbE \bigg[ \sum_{j=1}^{n+1} \nu_j Z_j  \bigg]^{-a} = \bbE 
\prod_{j=1}^{n+1} \frac{1}{ \{ \nu_j + X ( 1- \nu_j) \}^{\alpha_j }}, 
\end{eqnarray*}
where $X \sim \mbox{Beta}(b, a)$ with $b = \sum_{j=1}^{n+1} \beta_j -a$.

Applying Dickey's formula with $\beta_j = \alpha_j = 1/2$ for $j = 1, \ldots, n$, $\beta_{n+1} = 1$ and $a=2$ (so that $b= \frac{n}{2} +1 -2 = \frac{n}{2} -1$), \eqref{eq:I3} reduces to\begin{eqnarray}\label{eq:I4}
q_0 \prod_{j=1}^n \bigg(\frac{1}{q_j'}\bigg)^{\alpha_j} \frac{\prod_{j=1}^n\Gamma(\alpha_j)}{\Gamma(s+1)} \bbE  \prod_{j=1}^n \frac{1}{\big\{ \nu_j + X(1- \nu_j)\big\}^{\alpha_j}}
\end{eqnarray}
where $X \sim \mbox{Beta}(b, a)$ with density $f(x) = (n/2)(n/2-1) x^{n/2-2} (1 - x)$ for $x \in (0, 1)$.  
Hence, (\ref{eq:I4}) finally reduces to 
\begin{eqnarray*}
q_0 \bigg( \frac{n}{2} -1\bigg)
 \frac{\Gamma(1/2)^n}{\Gamma(n/2)} \int_{0}^1 \frac{ x^{n/2 -2} (1-x)}{\prod_{j=1}^n (q_j x + q_0)^{\alpha_j}} dx
\end{eqnarray*} \qed

\subsection*{\bf Error function bounds}
Let $\mbox{erfc}(x) = \frac{2}{\sqrt{\pi}}\int_{x}^{\infty} e^{-t^2} dt$ denote the complementary error function; clearly $\mbox{erfc}(x) = 2[ 1 - \Phi(\sqrt{2} x) ]$, where $\Phi$ denotes the standard normal c.d.f. A standard inequality (see, for example, Formula 7.1.13 in \cite{abramowitz1965handbook}) states
\begin{align}\label{eq:erfc_bd}
\frac{2}{x + \sqrt{x ^2+ 2}} \leq \sqrt{\pi} e^x \mbox{erfc}(\sqrt{x}) \leq \frac{2}{x + \sqrt{x + 4/\pi}}
\end{align}
Based on \eqref{eq:erfc_bd}, we show that
\begin{align}
& \sqrt{\pi} e^x \mathrm{erfc}(\sqrt{x}) \leq \frac{1}{\sqrt{x + 1/\pi}} \label{eq:erfc_ub} \\
& \sqrt{\pi} e^x \mathrm{erfc}(\sqrt{x}) \geq \bigg\{ \frac{1}{\sqrt{x}}  \bigg\}^{1 + \delta} \label{eq:erfc_lb}
\end{align}
where \eqref{eq:erfc_lb} holds for any $\delta > 0$ provided $x \geq 2$. 

In view of \eqref{eq:erfc_bd}, to prove \eqref{eq:erfc_ub} it is enough to show that $2 \sqrt{x + 1/\pi} \leq \sqrt{x} + \sqrt{x + 4/\pi}$, which follows since:  
\begin{eqnarray*}
(\sqrt{x} + \sqrt{x + 4/\pi})^2 - 4(x + 1/\pi) = 2x + 2\sqrt{x} \sqrt{x + 4/\pi} - 4x \geq 0 . 
\end{eqnarray*}
To show \eqref{eq:erfc_lb}, we use the lower bound for the complementary error function in \eqref{eq:erfc_bd}. 
First, we will show that for any $\delta > 0$, $x+ \sqrt{x^2 + 2} \leq 2x^{1+ \delta}$ if $x \geq 2$. Noting that if $x \geq 2$
\begin{eqnarray*}
x^{2+ 2\delta} - x^2 = x^2(x^{2\delta} -1) = x^2(x-1) (1+ x+ \cdots + x^{2\delta}) \geq 2.
\end{eqnarray*}  
Hence $\sqrt{x^2 + 2} \leq x^{1+\delta}$ if $x \geq 2$, showing that $x+ \sqrt{x^2 + 2} \leq 2x^{1+ \delta}$ if $x \geq 2$.
Thus, we have, for $x \geq 2$ and any $\delta > 0$, 
\begin{eqnarray*}
\sqrt{\pi} e^x \mbox{erfc}(\sqrt{x}) \geq \bigg(\frac{1}{\sqrt{x}}\bigg)^{1+\delta} .  
\end{eqnarray*} \qed

\bibliographystyle{plain}
\bibliography{cov_refs}
\end{document}